\documentclass[12pt]{amsart} 
\usepackage[left=3cm,top=2.5cm,bottom=2.5cm,right=3cm]{geometry}
\usepackage{times}
\usepackage{amssymb, amsmath, amsthm}
\usepackage{amsmath}
\usepackage{amssymb,amsfonts,stmaryrd,mathrsfs}
\usepackage{latexsym,amsthm,mathtools,bbm}
\usepackage{mathtools}
\usepackage{graphicx,xspace}
\usepackage{epsfig}
\usepackage{enumitem}
\usepackage[usenames,dvipsnames]{xcolor}
\usepackage{tikz}
\usepackage{gensymb}
\usepackage[T1]{fontenc}
\usepackage[utf8]{inputenc}
\usepackage{bm}
\usepackage{subcaption}

\definecolor{green}{RGB}{0,127,0}
\definecolor{red}{RGB}{191,0,0}
\usepackage[colorlinks,cite color=red,link color=green,pagebackref=true]{hyperref}
\usepackage[capitalize]{cleveref}
\usepackage{todonotes}

\newtheorem{thm}{Theorem}[section]
\newtheorem{cor}[thm]{Corollary}
\newtheorem{lem}[thm]{Lemma}
\newtheorem{prop}[thm]{Proposition}
\newtheorem{defi}[thm]{Definition}

\newtheorem{example}[thm]{Example}

\newtheorem{rmk}[thm]{Remark}
\newtheorem{exe}[thm]{Example}

\newcommand{\ZZ}{\mathbb{Z}}
\newcommand{\QQ}{\mathbb Q}

\newcommand{\bM}{\bar{M}}

\newcommand{\vweight}{\textup{vw}}
\newcommand{\Valley}{\mathcal{V}}
\newcommand{\VI}{\mathcal{VI}}
\newcommand{\height}{{\textup{ht}}}
\newcommand{\theight}{\widetilde{{\textup{ht}}}}

\newcommand{\sign}{\textup{sign}}

\newcommand{\normH}[1]{\left\|\tH_{#1}\right\|^2_{*}}

\DeclareMathOperator{\ad}{ad}
\DeclareMathOperator{\Aut}{Aut}
\DeclareMathOperator{\SU}{SU}

\DeclareMathOperator{\End}{End}

\newcommand{\aaa}{\mathfrak{a}}
\newcommand{\bbb}{\mathfrak{b}}

\newcommand{\bfgamma}{\boldsymbol{\gamma}}

\newcommand{\bfbeta}{\boldsymbol{\beta}}

\newcommand{\bQ}{\mathbf{Q}}
\newcommand{\bR}{\mathbf{R}}
\newcommand{\mcNI}{\mathcal{NI}}
\newcommand{\mcP}{\mathcal{P}}

\newcommand{\mcQ}{\mathcal{Q}}
\newcommand{\mcR}{\mathcal{R}}

\newcommand{\tH}{\widetilde{H}^{(q,t)}}

\newcommand{\KK}{\mathbb K}

\newcommand{\mcO}{\mathcal O}
\newcommand{\mcT}{\mathcal T}

\newcommand{\YY}{\mathbb Y}

\newcommand{\A}{\mathcal A}

\newcommand{\Qsym}{\mathcal{Q}^{\textup{sym}}}

\title[]{Path operators and $(q,t)$-tau functions}

\author[H.~Ben Dali]{Houcine Ben Dali}

\address{\parbox{\linewidth}{  Department of Mathematics, Harvard University, Cambridge, MA 02138, U.S.A.}
}
\email{bendali@math.harvard.edu}

\author[V.~Bonzom]{Valentin Bonzom}
\address{LIGM, CNRS UMR 8049, Université Gustave Eiffel, Champs-sur-Marne, France}
\email{valentin.bonzom@univ-eiffel.fr}

\author[M.~Dołęga]{Maciej Dołęga}
\address{
Institute of Mathematics, 
Polish Academy of Sciences, 
ul. Śniadeckich 8, 
00-956 Warszawa, Poland.
}
\email{mdolega@impan.pl}

\thanks{This research was funded in whole or in part by {\it Narodowe Centrum Nauki}, grant 2021/42/E/ST1/00162. For the purpose of Open Access, the author has applied a CC-BY public copyright licence to any Author Accepted Manuscript (AAM) version arising from this submission. V.~B.~was partially supported by the ANR-23-CE48-0018 CartesEtPlus, and the ANR-20-CE48-0018 3DMaps and the ANR-21-CE48-0017 LambdaComb.
}
\begin{document}

\begin{abstract}
We construct a new class of operators that act on symmetric functions with two deformation parameters $q$ and $t$. Our combinatorial construction associates each operator with a specific lattice path, whose steps alternate between moving up and down. We demonstrate that positive linear combinations of these operators are the images of Negut elements via a representation of the shuffle algebra acting on the space of symmetric functions. Additionally, we provide a monomial, elementary, and Schur symmetric function expansion for the symmetric function obtained through repeated applications of the path operators on $1$.

We apply path operators to investigate a $(q,t)$-deformation of the classical hypergeometric tau functions, which generalizes several important series already present in enumerative geometry, gauge theory, and integrability. We prove that this function is uniquely characterized by a family of partial differential equations derived from a positive linear combination of path operators. We also use our operators to offer a new, independent proof of the key result in establishing the extended delta conjecture of Haglund, Remmel, and Wilson.
\end{abstract}

\maketitle

\section{Introduction}

Hurwitz theory is a branch of enumerative geometry that focuses on counting branched coverings of the sphere. Various generating functions in Hurwitz theory can be expressed, using monodromy and the representation theory of symmetric groups, as explicit infinite series involving Schur symmetric functions~\cite{CavalieriMiles2016}. This perspective has proven very successful, revealing many beautiful connections between the somewhat forgotten Hurwitz theory and other modern fields such as Gromov--Witten theory, topological recursion, integrable probability, etc.~\cite{Okounkov2000,OkounkovPandharipande2006,KazarianLando2007,EynardMulaseSafnuk2011,Guay-PaquetHarnad2017}. A recurring technique that has turned out to be highly effective involves manipulating generating functions by constructing various operators that act on Schur symmetric functions in a controllable manner~\cite{Okounkov2000a,OrlovScherbin2000,GouldenJackson2008,EynardMulaseSafnuk2011,BychkovDuninBarkowskiKazarianShadrin2020,BonzomChapuyCharbonnierGarcia-Failde2024}.  

Until very recently, several questions have remained open in the deformed setting, where Schur symmetric functions are replaced by their one-parameter deformation given by Jack symmetric functions. The operators acting in classical Hurwitz theory are usually constructed through nested adjoint actions, and a similar construction might be used in the deformed setting~\cite{Lassalle2008b}. However, this approach does not yield an explicit formula for the operators. In~\cite{ChapuyDolega2022}, an explicit formula for several such operators was found, yielding a positive formula for the representation of the generators of the algebra $\mathbf{SH}^>$ introduced by Schiffmann and Vasserot~\cite{SchiffmannVasserot2013}. This was crucial in discovering an enumerative interpretation of Jack-deformed a.k.a. $b$-deformed Hurwitz theory. This positive formula was later reinterpreted in terms of the so-called path operators, an idea that proved to be very fruitful, leading to the proof of Lassalle's conjecture on the positivity of Jack characters~\cite{BenDaliDolega2023} and providing new insights into $b$-Hurwitz theory~\cite{Ruzza2023,ChidambaramDolegaOsuga2024,ChidambaramDolegaOsuga2024b,BenDali25}. The simplest instance of path operators can be recognized as the trace of a matrix of operators, and such an operator had already appeared in the context of Jack symmetric functions in the work of Nazarov--Sklyanin~\cite{NazarovSklyanin2013}. It was later incorporated into the framework of path operators to prove global asymptotics of a large class of discrete $\beta$-ensembles~\cite{Moll2023,CuencaDolegaMoll2023}.  

Nazarov and Sklyanin~\cite{NazarovSklyanin2015} also constructed analogous operators in another deformed case, where Schur symmetric functions are replaced by Macdonald symmetric functions. This deformation consists of two parameters, $q$ and $t$, and its classical limit corresponds to the aforementioned Jack deformation. Motivated by their work and the various applications of path operators in the Jack-deformed case, we extend these ideas to the $(q,t)$-deformation. Here is a summary of our main results.  

\begin{enumerate}  
\item We construct a new class of operators acting on symmetric functions with two deformation parameters $q$ and $t$. Our combinatorial construction associates an operator with a certain lattice path, whose steps alternate between going up and down. For any sequence of integers $\beta \in \mathbb{Z}^\ell$, we define an operator $\mathcal{R}_\beta$ using an explicit formula given by a sum of path operators lying over a special lattice path associated with $\beta$. We demonstrate that this operator is the image of the element $D_\beta$ from the shuffle algebra, constructed by Negut~\cite{Negut2014}, through the representation of the shuffle algebra on symmetric functions.  
\item We find a monomial, elementary, and Schur expansion of the symmetric function $\mathcal{R}_{\beta^{(n)}}\dotsm\mathcal{R}_{\beta^{(1)}}\cdot 1$, constructed by the repeated action of path operators on $1$.  
\item We define certain formal series that we call \emph{the $(q,t)$-tau function}, a natural $(q,t)$-deformation of the tau functions for the $G$-weighted $b$-Hurwitz numbers of \cite{ChapuyDolega2022}. Various special cases of this function have appeared as important generating series in enumerative geometry and mathematical physics, such as in the context of mixed Hodge polynomials of character varieties of the Riemann sphere~\cite{HauselLetellierRodriguezVillegas2011}, the celebrated AGT conjecture~\cite{Yanagida2016}, and in the recent work of Bourgine and Garbali~\cite{BourgineGarbali2024}, as a tau function of a $(q,t)$-extension of the 2D Toda hierarchy. We prove that the $(q,t)$-tau function satisfies a system of partial differential equations (PDEs) of a combinatorial nature built from our path operators. Moreover, we demonstrate these equations uniquely determine the $(q,t)$-tau function.  
\item We apply our techniques to provide a new, independent proof of the key result \sloppy \cite[Theorem 4.4.1]{BlasiakHaimanMorsePunSeelinger2023b} in proving the extended delta conjecture of Haglund, Remmel and Wilson~\cite{HaglundRemmelWilson2018}. We believe that this approach may be used in the future to tackle related open problems in the field.  
\end{enumerate}

\subsection{Path operators}

Let $N=(x_N,y_N)$ be a point of $\ZZ_{\geq 0}\times \ZZ$. We call \emph{a path} $\gamma$ from the origin $(0,0)$ to $N$, a sequence of points $(w_{0}, w_{1},\dotsc,w_{x_N})$ in $\ZZ_{\geq 0}\times \ZZ$ such that $w_j=(j,y_j)$, with $w_{0}=(0,0)$ and $w_{x_N}=N$. Such a path is uniquely encoded by its sequence of \emph{steps} $\gamma_j \coloneqq y_j-y_{j-1} \in\ZZ$. The \emph{length} of $\gamma$ is $x_N$ and its \emph{degree} is $y_N$.

We say that a path $\gamma=(\gamma_1,\dotsc, \gamma_{2n})$ from $(0,0)$ to $N$ is \emph{alternating},
if $\gamma_{2j-1}\geq 0$ and $\gamma_{2j}\leq 0$ for any $0\leq j\leq n$. 
In other terms, odd steps are up steps and even steps are down steps,
with the convention that a flat step is considered either an up or a down
step depending on its parity.

We say that a point $V=(x_V,y_V)$ of $\gamma$ is a \emph{valley} of $\gamma$ if $x_V$ is even. This means that $V$ is preceded by a down step and followed by an up step, or $V$ is the start or the end point of the path. Similarly, we say that $P=(x_P,y_P)$ is a \emph{peak} if $x_P$ is odd.

For $\beta = (\beta_1, \dotsc, \beta_\ell)\in\mathbb{Z}^\ell$, let $\gamma_\beta$ be the alternating path of length $2\ell$ such that for $i=1, \dotsc, \ell$, its steps are
\[\gamma_{2i-1} = \max(0,\beta_i),\qquad
\gamma_{2i} = \min(0, \beta_i).
\]
Equivalently, for $\beta_i\geq0$ (resp. $\beta_i\leq 0$) the $i$-th up step is $\beta_i$ (resp. is horizontal) and the $i$-th down step is horizontal (resp. is $\beta_i$).

We denote $\bR_\beta$ the set of alternating paths that start and end at the same points as $\gamma_\beta$ (i.e. have the same length and degree) and that stay weakly above $\gamma_\beta$. Notice that the valleys of $\gamma_\beta$ have $y$-coordinates $y_{2j} = \sum_{i=1}^j \beta_i$ for $j=0, \dotsc, \ell$. 

For $\gamma\in\bR_\beta$ and  $j=0, \dotsc, \ell$, let the \emph{$\beta$-height} of a valley $V=(2j, y_{2j})$ be
\begin{equation}\label{eq:def_height}
  \height_\beta(V) = y_{2j} - \sum_{i=1}^{j}\beta_i.  
\end{equation}
By definition of $\bR_\beta$, one has $\height_\beta(V)\geq0$ for all valleys $V$ of $\gamma$. Moreover, the start and the end point of $\gamma$ always have $\beta$-height zero.

\begin{figure}[h]
	\centering
	\includegraphics[width = 0.5\linewidth]{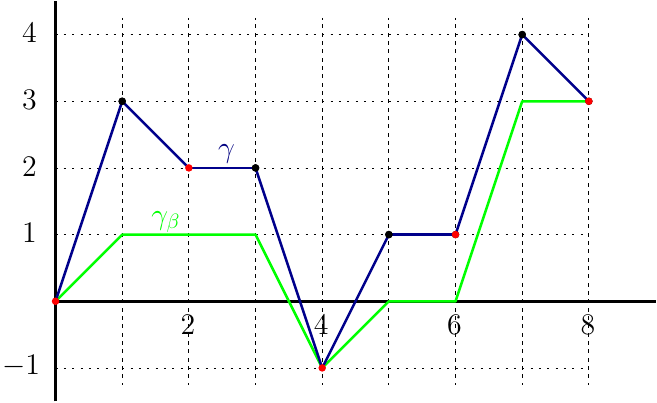}
	\caption{Two alternating paths of length 8 and degree 3: $\gamma$ (in blue) and $\gamma_\beta$ (in green) with $\beta = (1,-2,1,3)$. The five valleys of $\gamma$ are represented in red.}
	\label{fig:alternating_path}
\end{figure}

\begin{example}
	We give in \cref{fig:alternating_path} an example of an alternating path
	$\gamma$ of length $8$ and degree 3. We also draw the alternating path $\gamma_\beta$ for $\beta = (1,-2,1,3)$ to show that $\gamma \in \bR_{(1,-2,1,3)}$.
\end{example}

We denote by $\Lambda$ the vector space of symmetric functions over the fraction field $\QQ(q,t)$ in an alphabet of variables $X:=x_1+x_2+\dotsm$. 
\begin{defi}\label{def:decorated_paths}
	Let $\gamma\in\bR_\beta$ be an alternating
	path. We associate to $\gamma$ the \emph{valley weight}
	\begin{equation}\label{eq:valley_weight_PMAP}
		\vweight_\beta(\gamma)=\prod_{V\in\Valley(\gamma)}(qt)^{\height_\beta(V)},  
	\end{equation}
	the product being taken over the valleys of $\gamma$. We associate to $\gamma$ the linear operator $\mcO(\gamma)\colon \Lambda \rightarrow \Lambda$ as follows:  if $\gamma$ has steps $(\gamma_1,\dots,\gamma_{2l})$ then 
	\begin{equation*}
		\label{eq:decorated_path}
		\mcO_\beta(\gamma):=\vweight_\beta(\gamma)\mcO(\gamma_1)\dots \mcO(\gamma_{2l}),
	\end{equation*}
	where for any integer $m$ we define
	\begin{equation}\label{eq:def_step_operator}
		\mcO(m):=\left\{
		\begin{array}{ll}
			(-1)^me_{m}[X]=h_{m}[-X]   & \text{ if } m>0  \\
			h^\perp_{-m}[MX]   & \text{ if } m<0\\
			1 &\text{ if } m=0,
		\end{array}
		\right.  
	\end{equation}
    	with $M:= (1-q)(1-t)$.
	Finally, we define  
	\begin{equation*}\label{eq:def_path_operators}
		\mcR_{\beta}:=\sum_{\gamma\in\bR_{\beta}}\mcO_\beta\left(\gamma\right).
	\end{equation*}	
	
\end{defi}
We use a standard notation where square brackets $[\cdot]$ denote the plethystic substitution, the operator $e_{m}$ is to be understood as the multiplication by the symmetric function $e_{m}$, and similarly $h_m^\perp$ denotes the adjoint of the multiplication by $h_m$ with respect to the Hall scalar product; see~\cref{sec:Prelim} for more details.

\begin{rmk}
	\label{rem:remark_path_op}
	Since the set $\bR_{\beta}$ is infinite, it might not be immediately clear that $\mcR_{\beta}\colon \Lambda \rightarrow \Lambda$ is a well-defined endomorphism of $\Lambda$. It is indeed well-defined since for any $f \in \Lambda$ there are only finitely many paths $\gamma \in \bR_{\beta}$ such that $f \notin \ker(\mcO_\beta(\gamma))$. This is a consequence of the property that for any sequence $(\gamma_1,\dots,\gamma_\ell) \in \ZZ^\ell$ such that the partial sum $\sum_{i =j}^\ell \gamma_i$ is smaller than $-m$ for some $1 \leq j \leq \ell$, the operator $\mcO(\gamma_1)\cdots \mcO(\gamma_\ell)$ annihilates the space $\Lambda^i$ of homogeneous symmetric functions of degree $i \leq m$:
	\[  \bigoplus_{i=0}^m \Lambda^i \subset \ker \left( \mcO(\gamma_1)\cdots \mcO(\gamma_\ell)\right).\] 
\end{rmk}

\begin{example}
	The operator associated to the path $\gamma$ of \cref{fig:alternating_path} is given by
	\[\mcO_{(1,-2,1,3)}(\gamma)=(qt)^{1+0+1} \cdot(-1)^{3+0+2+3}e_{3}[X]h^\perp_{1}[MX]h^\perp_{3}[MX]e_{2}[X]e_{3}[X]h^\perp_{1}[MX].\]
\end{example}

\subsection{Product formula in terms of \texorpdfstring{$D$}{D}-operators and connection to the shuffle algebra}

We prove that the path operators $\mcR_\beta$ have a simple expression using the vertex operator algebra (VOA) formalism involving the series $D(z)$ introduced in~\cite{GarsiaHaimanTesler1999}. As a corollary we deduce that the path operators are the images of the so-called Negut elements via a representation of the shuffle algebra acting on the space of symmetric functions. Consequently, we find a new explicit combinatorial formula for this representation of the Negut elements.

We say that a formal expression
\[ F(z) := \sum_{n \in \ZZ}a_n z^{n} \in \End[\Lambda]\llbracket z,z^{-1}\rrbracket\]
is a \emph{field} if for any $f \in \Lambda$, there exists $N$ such that $f \in \ker(a_n)$ for all $n < N$ (see \cite{Kac2017}). It is well-known (one can use a similar reasoning as in \cref{rem:remark_path_op}) that whenever $F_1,\dots,F_\ell$ are fields, then the coefficients of the following Laurent series in $z_1,\dots,z_\ell$ are well-defined endomorphisms of $\Lambda$:
\[ \frac{F_1(z_1) \dotsm F_\ell(z_{\ell})}{\prod_{i=1}^{\ell-1} \bigl(1-z_{i+1}/z_i\bigr)} \in \End[\Lambda]\llbracket z_1,z_1^{-1},\dots,z_\ell,z_\ell^{-1}\rrbracket, \quad \bigl(1-z_{i+1}/z_i\bigr)^{-1} := \sum_{n \geq 0}z_{i+1}^n \cdot z_i^{-n}.\]
We recall (see~\cref{sec:Prelim} for more details) that the field $D(z)$ is defined as follows:
\[ D(z) := \sum_{n \in \ZZ} D_n z^n = \sum_{n\geq 0} (-1)^n e_n[X] z^n \cdot \sum_{n\geq 0} h_n^\perp[(1-q)(1-t)X] z^{-n} = \sum_{n \in \ZZ} \mcR_{(n)} z^n,\]
and it satisfies the following commutation relation:
\begin{equation} \label{CommutationD}
	\omega(z_1/z_2) D(z_1) D(z_2) = \omega(z_2/z_1) D(z_2) D(z_1),
	\quad \omega(x) := \frac{(1-x)(1-qtx)}{(1-qx)(1-tx)}.
\end{equation}
Following this formalism, we prove the following VOA expression for the path operators.
\begin{thm}\label{thm:R_vertex_op}
	Fix $\beta=(\beta_1,\dots,\beta_\ell)\in\ZZ^{\ell}$, then
	\begin{equation}\label{eq:R_vertex_formula}
		\mcR_\beta = [z_1^{\beta_1} \dotsm z_{\ell}^{\beta_{\ell}}] \frac{D(z_1) \dotsm D(z_{\ell})}{\prod_{i=1}^{\ell-1} \bigl(1-qt z_{i+1}/z_i\bigr)}.
	\end{equation}
\end{thm}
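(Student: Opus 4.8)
The plan is to expand both sides into products of the step operators $\mcO(m)$ and to match the coefficient of $z_1^{\beta_1}\dotsm z_\ell^{\beta_\ell}$ termwise. The starting point is that $D(z)$ factors as an ``up'' part times a ``down'' part,
\[ D(z) = \Bigl(\sum_{a\geq 0}\mcO(a)\,z^{a}\Bigr)\Bigl(\sum_{b\geq 0}\mcO(-b)\,z^{-b}\Bigr), \]
since $(-1)^a e_a[X]=\mcO(a)$ and $h_b^\perp[MX]=\mcO(-b)$ for $a,b\geq 1$, both factors reducing to $\mcO(0)=1$ at exponent $0$. Multiplying the $\ell$ copies gives
\[ D(z_1)\dotsm D(z_\ell)=\sum_{a_1,\dots,a_\ell\geq0}\ \sum_{b_1,\dots,b_\ell\geq0}\ \mcO(a_1)\mcO(-b_1)\dotsm\mcO(a_\ell)\mcO(-b_\ell)\ \prod_{i=1}^\ell z_i^{a_i-b_i}. \]
The operator here is precisely $\mcO(\gamma_1)\dotsm\mcO(\gamma_{2\ell})$ for the alternating path $\gamma$ with up steps $\gamma_{2i-1}=a_i$ and down steps $\gamma_{2i}=-b_i$, and this sets up a bijection between pairs of sequences $(a_i),(b_i)\in\ZZ_{\geq0}^{\ell}$ and alternating paths of length $2\ell$.

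Next I would expand the denominator as prescribed,
\[ \prod_{i=1}^{\ell-1}\bigl(1-qt\,z_{i+1}/z_i\bigr)^{-1}=\sum_{c_1,\dots,c_{\ell-1}\geq0}(qt)^{c_1+\dots+c_{\ell-1}}\prod_{i=1}^{\ell-1}\bigl(z_{i+1}/z_i\bigr)^{c_i}, \]
and read off the coefficient of $z_1^{\beta_1}\dotsm z_\ell^{\beta_\ell}$ in the product with $D(z_1)\dotsm D(z_\ell)$. With the convention $c_0=c_\ell=0$, the exponent of $z_j$ is $(a_j-b_j)+(c_{j-1}-c_j)$, so the surviving terms are exactly those satisfying $a_j-b_j+c_{j-1}-c_j=\beta_j$ for $j=1,\dots,\ell$.

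The crux is then a direct identification. For a fixed alternating path $\gamma$ these linear equations telescope to determine the auxiliary exponents uniquely as $c_j=\sum_{k=1}^{j}(a_k-b_k-\beta_k)$, which is exactly $\height_\beta(V_j)$ for the valley $V_j=(2j,y_{2j})$, because $y_{2j}=\sum_{k=1}^{j}(a_k-b_k)$. Consequently the nonnegativity constraint $c_j\geq0$ from the geometric expansion is the condition $\height_\beta(V_j)\geq0$, the boundary values $c_0=c_\ell=0$ encode that $\gamma$ has the same endpoints as $\gamma_\beta$, and the accumulated factor $(qt)^{c_1+\dots+c_{\ell-1}}=(qt)^{\sum_{j=0}^{\ell}\height_\beta(V_j)}$ is precisely $\vweight_\beta(\gamma)$. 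Summing over the admissible $\gamma$ then produces $\sum_\gamma\vweight_\beta(\gamma)\,\mcO(\gamma_1)\dotsm\mcO(\gamma_{2\ell})$, which is $\mcR_\beta$ by \cref{def:decorated_paths}.

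The step that needs genuine care is checking that the index set cut out by ``$\gamma$ alternating, correct endpoints, all $c_j\geq0$'' coincides with $\bR_\beta$, i.e. that nonnegativity of the valley heights is equivalent to staying weakly above $\gamma_\beta$. One inclusion is immediate since valleys lie on $\gamma$; for the other I would verify the peak inequalities, observing from $\beta_i=a_i-b_i+c_{i-1}-c_i$ that the height of the $i$-th peak of $\gamma$ exceeds that of $\gamma_\beta$ by $c_{i-1}+a_i-\max(0,\beta_i)=c_i+b_i\geq0$ when $\beta_i\geq0$ and by $c_{i-1}+a_i\geq0$ when $\beta_i<0$. Finally I would note that all the formal rearrangements are legitimate: applied to any fixed $f\in\Lambda$ only finitely many terms survive on the left by \cref{rem:remark_path_op} and on the right by the field property of $D(z)$, so the termwise comparison is an identity of operators.
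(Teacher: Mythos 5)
Your proof is correct, but it takes a different route from the paper's. The paper first proves \cref{prop:Qalpha_D}, rewriting $\mcR_\beta$ as a sum over valley increments $\VI_\beta$ of weighted coefficients of $D(z_1)\dotsm D(z_\ell)$ (by summing over peak heights at fixed valleys), and then runs an induction on $\ell$ with intermediate operators $\mcR_\beta^{(m)}(w)$, using \cref{lem:LargerPart} to absorb one factor $\bigl(1-qtz_{i+1}/z_i\bigr)^{-1}$ at a time. You instead expand everything at once --- the up/down factorization of each $D(z_i)$ and all $\ell-1$ geometric series --- and match monomials globally, observing that the auxiliary exponents $c_j$ are forced to equal the valley heights $\height_\beta(V_j)$, so that the surviving index set is exactly $\bR_\beta$ and the accumulated $(qt)$-power is exactly $\vweight_\beta(\gamma)$. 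Both arguments hinge on the same identification, but yours dispenses with the induction and the intermediate operators, at the cost of having to verify explicitly that nonnegativity of the valley heights (plus matching endpoints) is equivalent to membership in $\bR_\beta$, i.e.\ that the peak inequalities come for free; your computation $c_{i-1}+a_i-\max(0,\beta_i)=c_i+b_i\geq 0$ for $\beta_i\geq 0$ does this correctly. Note that the paper relies on the same fact implicitly: its claim that summing over peak heights at fixed valley increments in $\VI_\beta$ reproduces $\prod_j D_{r_j}$ requires that every choice of peaks yields a path weakly above $\gamma_\beta$, which is precisely your check. So your version is a legitimate, somewhat more self-contained alternative, and your closing remark on well-definedness of the formal rearrangements (finitely many surviving terms on a fixed $f\in\Lambda$) addresses the only remaining analytic point.
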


The shuffle algebra $S$ \cite{FeiginTsymbaliuk2011,Negut2014} is the graded algebra $S = \oplus_{n\geq 0} S_n$ where $S_0 = \QQ(q,t)$,  and for $n\geq 1$ an element $F$ of $S_n$ is of the form
\begin{equation*}
	F(z_1, \dotsc, z_n) = f(z_1, \dotsc, z_n) \prod_{1\leq i\neq j\leq n} \frac{z_i-z_j}{(z_i-qz_j)(z_i-tz_j)},
\end{equation*}
where $f(z_1, \dotsc, z_n) \in \QQ(q,t)[z_1, z_1^{-1}, \dotsc, z_n,z_n^{-1}]^{\mathfrak{S}_n}$ is a symmetric Laurent polynomial which satisfies the \emph{wheel condition}
\begin{equation*}
	f(z_1, \dotsc, z_n) = 0\qquad \text{whenever} \quad \left\{\frac{z_1}{z_2}, \frac{z_2}{z_3}, \frac{z_3}{z_1}\right\} = \left\{q, t, \frac{1}{qt}\right\}.
\end{equation*}
In particular, $S_1 = \QQ(q,t)[z_1,z_1^{-1}]$.

The shuffle algebra $S$ is equipped with the following product: for $F\in S_n, G\in S_m$, then $F*G\in S_{n+m}$ with
\begin{equation*}
	(F*G)(z_1, \dotsc, z_{n+m}) = \frac{1}{n!m!} \operatorname{Sym}\left( F(z_1, \dotsc, z_n) G(z_{n+1}, \dotsc, z_{n+m}) \prod_{i=1}^n \prod_{j=n+1}^{n+m} \omega(z_j/z_i)\right),
\end{equation*}
where $\operatorname{Sym}$ denotes the symmetrization with respect to the variables $z_1, \dotsc, z_{n+m}$.

We will use the following action of the shuffle algebra on the ring of symmetric functions. We refer to \cite{FeiginHashizumeHoshinoShiraishiYanagida2009,FeiginTsymbaliuk2011} for more details.
\begin{defi}\label{def:shuffle_algebra}
The shuffle algebra $S$ acts on $\Lambda$ through the following representation $F\mapsto \hat{F}$ 
which maps $F\in S_n$ to an operator $\hat{F} \in \End(\Lambda)$ and defined on the generators $z^a$ by
\begin{equation*}
    \widehat{z^a}=D_{-a}=[z^0]z^aD(z),\quad \forall a\in \ZZ.
\end{equation*}
It follows then from \cref{CommutationD} that for any $F\in S$,
\begin{equation*}
	\hat{F}:= [z_1^{0} \dotsm z_n^{0}] F(z_1, \dotsc, z_n) \frac{D(z_1) \dotsm D(z_n)}{\prod_{1\leq i<j\leq n} \omega(z_j/z_i)}.
\end{equation*}
\end{defi}

 In \cite{Negut2014}, Negut introduced specific elements
\begin{equation*}
	X_{\beta}(z_1, \dotsc, z_\ell):= \operatorname{Sym}\left(\frac{z_1^{-\beta_1} \dotsm z_\ell^{-\beta_\ell}}{\prod_{i=1}^{\ell-1} (1-qtz_{i+1}/z_i)} \prod_{1\leq i<j\leq \ell} \omega(z_j/z_i)\right) \quad \in S_\ell,
\end{equation*}
for $\beta = (\beta_1, \dotsc, \beta_\ell)\in \ZZ^\ell$. Using the relations of \cref{CommutationD} of the $D$-operators, a straightforward calculation shows that $\hat{X}_\beta$ coincides with the RHS of \cref{eq:R_vertex_formula}. Thus, we have the following corollary from \Cref{thm:R_vertex_op}.
\begin{cor}
\label{cor:Negut}
Fix $\beta = (\beta_1, \dotsc, \beta_\ell)\in\ZZ^\ell$. Then we have
\[	\hat{X}_\beta = \mcR_\beta. \]
\end{cor}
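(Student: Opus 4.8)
The plan is to compute $\hat X_\beta$ directly from \cref{def:shuffle_algebra} by substituting the explicit Negut element, and to use the commutation relation \eqref{CommutationD} to collapse the symmetrization onto a single ``fundamental'' term that is exactly the right-hand side of \eqref{eq:R_vertex_formula}; the identification $\hat X_\beta=\mcR_\beta$ then follows from \cref{thm:R_vertex_op}. The first step, and the one on which everything rests, is to record that \eqref{CommutationD} makes the normalized product of $D$-fields symmetric in its arguments: writing $\Phi(z_1,\dots,z_\ell) := \frac{D(z_1)\dotsm D(z_\ell)}{\prod_{1\le i<j\le \ell}\omega(z_j/z_i)}$, I claim that for every $\sigma\in\mathfrak{S}_\ell$ one has $\Phi(z_1,\dots,z_\ell)=\Phi(z_{\sigma(1)},\dots,z_{\sigma(\ell)})$, where on the right the whole expression (numerator and denominator) is permuted. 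It suffices to verify this for an adjacent transposition $\sigma=(k\ \ k{+}1)$: all factors except the one attached to the swapped pair $(a,b)=(z_{\sigma(k)},z_{\sigma(k+1)})$ are unchanged, and the remaining identity $\omega(b/a)^{-1}D(a)D(b)=\omega(a/b)^{-1}D(b)D(a)$ is precisely \eqref{CommutationD} with $z_1=a,\ z_2=b$.

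Next I would insert $X_\beta=\operatorname{Sym}\!\left(\frac{z_1^{-\beta_1}\dotsm z_\ell^{-\beta_\ell}}{\prod_{i=1}^{\ell-1}(1-qtz_{i+1}/z_i)}\prod_{1\le i<j\le\ell}\omega(z_j/z_i)\right)$ into the formula
$\hat X_\beta=[z_1^{0}\dotsm z_\ell^{0}]\,X_\beta\,\Phi$ from \cref{def:shuffle_algebra}. Because $\Phi$ is symmetric by the first step, it can be absorbed inside the symmetrization, $\operatorname{Sym}(g)\cdot\Phi=\operatorname{Sym}(g\cdot\Phi)$, and inside each summand the factor $\prod_{i<j}\omega$ coming from $X_\beta$ cancels exactly against the denominator of $\Phi$. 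This leaves
\begin{equation*}
  \hat X_\beta=[z_1^{0}\dotsm z_\ell^{0}]\,\operatorname{Sym}\!\left(\frac{z_1^{-\beta_1}\dotsm z_\ell^{-\beta_\ell}}{\prod_{i=1}^{\ell-1}\bigl(1-qt z_{i+1}/z_i\bigr)}\,D(z_1)\dotsm D(z_\ell)\right),
\end{equation*}
i.e. the symmetrization of a single clean expression involving no $\omega$-factors.

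Finally, since the constant-term functional $[z_1^0\dotsm z_\ell^0]$ is itself invariant under relabeling of the dummy variables, extracting it from the symmetrized expression reduces (up to the combinatorial constant carried by the convention for $\operatorname{Sym}$, which is fixed so as to equal $1$) to the single term with $\sigma=\mathrm{id}$. Absorbing the monomial into the extraction turns $[z_1^{0}\dotsm z_\ell^{0}]\,z_1^{-\beta_1}\dotsm z_\ell^{-\beta_\ell}$ into $[z_1^{\beta_1}\dotsm z_\ell^{\beta_\ell}]$, yielding exactly the right-hand side of \eqref{eq:R_vertex_formula}, whence $\hat X_\beta=\mcR_\beta$ by \cref{thm:R_vertex_op}. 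The genuinely delicate points, which the phrase ``straightforward calculation'' conceals, are analytic rather than algebraic: one must fix a common domain of expansion (as in \cref{sec:Prelim}) in which the fields $D(z_i)$, the denominators $(1-qtz_{i+1}/z_i)^{-1}$, and the $\omega$-factors are simultaneously well defined, so that the symmetry of $\Phi$, the cancellation of the $\omega$-factors, and the region-independence of the constant term all hold as honest identities of operator-valued Laurent series; and one must track the $\operatorname{Sym}$ normalization so that the overall constant is indeed $1$. Once these conventions are pinned down, the displayed chain of equalities goes through verbatim.
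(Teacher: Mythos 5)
Your argument is exactly the ``straightforward calculation'' that the paper leaves implicit: use \eqref{CommutationD} to see that $D(z_1)\dotsm D(z_\ell)/\prod_{i<j}\omega(z_j/z_i)$ is symmetric, absorb it into $\operatorname{Sym}$, cancel the $\omega$-factors, reduce the constant-term extraction to the identity permutation, and invoke \cref{thm:R_vertex_op}. This matches the paper's (one-line) proof in both substance and structure, and you correctly flag the only genuinely delicate points, namely the expansion regions and the $\operatorname{Sym}$ normalization, which must be the averaged one for \cref{def:shuffle_algebra} to be internally consistent.
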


Blasiak {\it et al} \cite{BlasiakHaimanMorsePunSeelinger2023,BlasiakHaimanMorsePunSeelinger2023b} used these distinguished elements (also called Negut elements) to prove some generalizations of the shuffle and delta conjectures. Remarkably, our analysis of path operators leads to a combinatorial proof of Theorem 4.4.1 from \cite{BlasiakHaimanMorsePunSeelinger2023b}, which is the authors' key reformulation of the symmetric function side of the extended delta conjecture. It comes out as a corollary of the following theorem, which itself follows from \Cref{thm:R_vertex_op}.

\begin{thm}\label{thm:explicit_formula}
	Fix $\beta=(\beta_1,\dots,\beta_\ell)$ of size $n\geq 0$. 
	For any partition $\lambda$ of size $n$ and for the alphabet $Z =
	z_1+\dotsm+z_\ell$ we have
	\begin{multline}
		(-1)^n [s_\lambda[X]] \mcR_{\beta}\cdot 1\\
		=\left[z_1^{\beta_1} \dotsm z_\ell^{\beta_\ell}\right]
		s_{\lambda'}[Z]\prod_{1\leq i<j\leq \ell} \frac{1-z_j/z_i}{(1-qz_j/z_i)(1-tz_j/z_i)}
		\prod_{1\leq i<j-1<\ell} (1-qtz_j/z_i),
	\end{multline}
	where 
	$\lambda'$ is the transpose of $\lambda$, and 
	the rational functions in the RHS should be expanded as formal series in $z_j/z_i$ for $j>i$.
\end{thm}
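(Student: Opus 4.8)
The plan is to feed \Cref{thm:R_vertex_op} with the explicit action of the $D$-field on the vacuum $1$. Writing the field as $D(z) = D^+(z)\,D^-(z)$, where $D^+(z) = \sum_{n\geq 0}(-1)^ne_n[X]z^n$ is the creation part (multiplication operators) and $D^-(z) = \sum_{n\geq 0}h_n^\perp[MX]z^{-n}$ is the annihilation part, I would first record the two elementary facts that make everything collapse: (i) $D^-(z)\cdot 1 = 1$, since $h_n^\perp[MX]$ lowers the degree by $n$ and hence kills $1$ for $n\geq 1$; and (ii) the normal-ordering relation
\[ D^-(w)\,D^+(z) = \omega(z/w)\,D^+(z)\,D^-(w), \]
expanded as a power series in $z/w$. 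Relation (ii) is a Baker–Campbell–Hausdorff computation: expressing $D^+(z)=\exp\bigl(-\sum_{k\geq 1}\tfrac{p_k}{k}z^k\bigr)$ and $D^-(w)=\exp\bigl(\sum_{k\geq 1}M_k\,\partial_{p_k}\,w^{-k}\bigr)$ with $M_k=(1-q^k)(1-t^k)$, the only commutator is the central scalar $-\sum_k\tfrac{M_k}{k}(z/w)^k$, whose exponential is exactly $\omega(z/w)$ because $\sum_k\tfrac{M_k}{k}x^k=-\log\frac{(1-x)(1-qtx)}{(1-qx)(1-tx)}$.

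Armed with (i)--(ii), I would normal-order the length-$\ell$ product: moving each $D^-(z_i)$ to the right past $D^+(z_{i+1}),\dots,D^+(z_\ell)$ produces one factor $\omega(z_j/z_i)$ for each $i<j$, so that
\[ D(z_1)\dotsm D(z_\ell)\cdot 1 = \prod_{1\leq i<j\leq\ell}\omega(z_j/z_i)\;\prod_{i=1}^{\ell}\prod_{a\geq 1}(1-x_az_i), \]
using (i) to drop all the $D^-$'s and $\prod_i D^+(z_i)\cdot 1=\prod_{i,a}(1-x_az_i)$. Dividing by $\prod_{i=1}^{\ell-1}(1-qtz_{i+1}/z_i)$ as in \Cref{thm:R_vertex_op} cancels precisely the consecutive factors $(1-qtz_{i+1}/z_i)$ sitting inside $\prod_{i<j}\omega(z_j/z_i)$, leaving
\[ \frac{\prod_{i<j}\omega(z_j/z_i)}{\prod_{i=1}^{\ell-1}(1-qtz_{i+1}/z_i)} = \prod_{1\leq i<j\leq\ell}\frac{1-z_j/z_i}{(1-qz_j/z_i)(1-tz_j/z_i)}\;\prod_{1\leq i<j-1<\ell}(1-qtz_j/z_i), \]
which is exactly the rational prefactor appearing in the statement.

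It then remains to expand the Cauchy kernel $\prod_{i,a}(1-x_az_i)$ in the Schur basis of the $X$-variables. Starting from the dual Cauchy identity $\prod_{i,a}(1+x_az_i)=\sum_\lambda s_\lambda[X]\,s_{\lambda'}[Z]$ and substituting $z_i\mapsto -z_i$, homogeneity of $s_{\lambda'}$ (of degree $|\lambda'|=|\lambda|$) gives
\[ \prod_{i=1}^{\ell}\prod_{a\geq 1}(1-x_az_i)=\sum_\lambda(-1)^{|\lambda|}s_\lambda[X]\,s_{\lambda'}[Z]. \]
Extracting the coefficient of $s_\lambda[X]$ (which forces $|\lambda|=\sum_i\beta_i=n$, since the prefactor is homogeneous of $z$-degree $0$), then multiplying by $(-1)^n$ and extracting $[z_1^{\beta_1}\dotsm z_\ell^{\beta_\ell}]$, yields the claimed identity. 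The only genuinely delicate points are establishing the normal-ordering relation (ii) and justifying that all the infinite products and rational expressions are manipulated consistently as formal Laurent series expanded in $z_j/z_i$ for $j>i$, in agreement with the conventions of \Cref{thm:R_vertex_op}; the appearance of the transpose $\lambda'$ together with the sign $(-1)^n$ is the subtle bookkeeping that must be handled with care, and it is precisely the dual (rather than ordinary) Cauchy kernel $\prod(1-x_az_i)$ that forces it.
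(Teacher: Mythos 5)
Your proposal is correct and follows essentially the same route as the paper: both start from \Cref{thm:R_vertex_op}, normal-order $D(z_1)\dotsm D(z_\ell)$ acting on $1$ to produce $\prod_{i<j}\omega(z_j/z_i)\,\Omega[-\sum_i z_iX]$, cancel the consecutive $(1-qtz_{i+1}/z_i)$ factors, and finish with the dual Cauchy identity (the paper packages this as \cref{thm:explicit_formula2} and \cref{cor:explicit:formula}). The only cosmetic difference is that you re-derive the commutation $\mcT_{M/z}\mcP_{-w}=\omega(w/z)\mcP_{-w}\mcT_{M/z}$ by a BCH computation, whereas the paper imports it from \cite{GarsiaHaimanTesler1999} via \eqref{eq:mcT-mcP'}.
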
	
Similar formulas hold for $(-1)^n [m_\lambda[X]] \mcR_{\beta}\cdot 1$ and $(-1)^n[e_\lambda[X]] \mcR_{\beta}\cdot 1$, as given in \cref{subsec:NormalOrdering}.

We will also give versions of \Cref{thm:R_vertex_op} and \Cref{thm:explicit_formula} for products of path operators, i.e $\mcR_{\bfbeta} \coloneqq \mcR_{\beta^{(1)}}\dotsm \mcR_{\beta^{(\ell)}}$, with $\bfbeta = (\beta^{(1)}, \dotsc, \beta^{(\ell)})$ where each $\beta^{(i)}\in\ZZ^{\ell_i}$ is a sequence of integers.

\subsection{The \texorpdfstring{$(q,t)$}{(q,t)}-tau function}
Consider two infinite sequences of variables $U = u_1,u_2,\dotsc$ and $V = v_1,v_2,\dotsc$, and define $\KK$ as the ring of formal power series in these variables with coefficients in $\QQ(q,t)$ 
\[	\KK:=\QQ(q,t)\llbracket U,V \rrbracket. \]
Let  
\[G_1(\hbar) := 1+\sum_{n=1}^\infty u_n \hbar^n \in \QQ[U,V]\llbracket \hbar \rrbracket,\quad  G_2(\hbar) :=  1+\sum_{n=1}^\infty v_n \hbar^n \in \QQ[U,V]\llbracket \hbar \rrbracket.\]
Let $G(\hbar) := G_1(\hbar) \cdot G_2^{-1}(\hbar) \in \QQ[U,V]\llbracket \hbar \rrbracket$, and note that the substitution $ G(q^{j} t^{i}) := \frac{G_1(q^{j} t^{i})}{G_2(q^{j} t^{i})} \in \KK$ is well-defined for all $i,j\geq 0$. The \emph{$G$-weighted $(q,t)$-tau function} is the series in $\Lambda^\KK_X\otimes\Lambda^\KK_Y\llbracket z\rrbracket$ defined by:
\begin{equation*}
	\label{eq:GWeightedDoubleTau}
	\tau_G(z,X,Y) :
	= \sum_{\lambda\in\mathbb
		Y}z^{|\lambda|}\frac{\tH_\lambda[X]\tH_\lambda[Y]}{\normH{\lambda}}\prod_{(i,j)\in\lambda} G(q^{j-1} t^{i-1}),
\end{equation*}
where $\tH_\lambda$ are modified Macdonald polynomials, and $\Lambda^\KK_A$ is the algebra of symmetric functions in the alphabet $A = X,Y$ with coefficients in $\KK$. The notation $\normH{\lambda}$ is the squared norm of the modified Macdonald polynomial $\tH_\lambda$  (see \cref{sub:MacdonaldPrel} for the details).

Beyond it being a natural $(q,t)$-deformation of the tau
functions for the $G$-weighted $b$-Hurwitz numbers of \cite{ChapuyDolega2022}, this function has appeared as an important generating series in enumerative geometry and
mathematical physics for specific cases of the weight $G$. When $G(\hbar) = \prod_{i=1}^n(1-\hbar u_i)$, the plethystic
logarithm of $\tau_G(z,X,Y)$ was conjectured by
Hausel--Lettelier--Rodriguez-Villegas~\cite{HauselLetellierRodriguezVillegas2011} to be the generating series of
the mixed Hodge polynomials of character varieties of the Riemann
sphere. When $G(\hbar) = (1-u\hbar)^{-1}$, the specialization $Y =
1$ corresponds to the Whittaker vector for the
deformed Virasoro algebra from the $5D$ $\mathcal{N} = 1$ pure
$\SU(2)$ case of the AGT conjecture~\cite{Yanagida2016}. Finally, the
case $G(\hbar)=\hbar$ has appeared in the recent work of
Bourgine and Garbali~\cite{BourgineGarbali2024} as a tau function of a $(q,t)$-extension of the
2D Toda hierarchy\footnote{In
	\cite{BourgineGarbali2024,Yanagida2016} the authors worked with the
	$P$-version of Macdonald polynomials. We define our $(q,t)$-tau
	function in terms of the modified Macdonald polynomials, as such a function seems
	to have much more interesting combinatorial structure that is
	partially supported by the conjecture of Hausel--Lettelier--Rodriguez-Villegas.}. 
	
Our main result is a construction of a system of functional equations which uniquely characterizes the $G$-weighted $(q,t)$-tau function. These equations involve some families of path operators parametrized by non-negative sequences $\beta$. It will be convenient to describe these operators by the distances between unit increments, rather than the number of unit increments in each position given by $\beta$. This list of distances, which we denote by $\psi(\beta)$, is obtained from $\beta$ by applying a simple reparametrization bijection $\psi$ (see \cref{sub:path_op} for a precise definition). We then denote by $\mcQ$ the operators obtained from $\mcR$ by this reparametrization:
\[\mcQ_{\psi(\beta)} := \mcR_\beta.\]
Then for $F(\hbar) =1+\sum_{i\geq 1} a_i \hbar^i \in \{G_1, G_2\}$, let
\begin{equation}\label{eq:PathOperatorsA}
  \A_F^{(\ell)} = \sum_{\alpha} a_\alpha \mcQ_\alpha, 
\end{equation}
where the sum is taken over $\alpha \in \ZZ_{\geq 0}^{\ell}$, and $a_\alpha := \prod_{i=1}^\ell a_{\alpha_i}$. We have the following characterization of the $G$-weighted $(q,t)$-tau function.

\begin{thm}\label{thm:dif_eq'}
	For any $\ell\geq 1$ we have
	\begin{equation}\label{eq:diff_eq}
		z^\ell\A^{(\ell)}_{G_1}(X)\cdot\tau_G(z,X,Y)=\left(\A^{(\ell)}_{G_2}(Y)\right)^*\cdot\tau_G(z,X,Y),
	\end{equation}
	where $\left(\A_{G_2}^{(\ell)}\right)^*$ is the adjoint of $\A_{G_2}^{(\ell)}$ with respect to the star scalar product (see \cref{sub:MacdonaldPrel}).
	Moreover, $\tau_G(z,X,Y)$ is the unique function $\Phi_G\in\Lambda^\KK_X\otimes\Lambda^\KK_Y\llbracket z\rrbracket$ that satisfies these equations and such that $[z^0]\Phi_G(z,X,Y)=1$.
\end{thm}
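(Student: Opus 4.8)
The plan is to work throughout in the basis of modified Macdonald polynomials, in which the star scalar product is diagonal, $\langle\tH_\mu,\tH_\nu\rangle_*=\delta_{\mu\nu}\normH{\mu}$. First I would establish the functional equations \eqref{eq:diff_eq}. Writing the $(q,t)$-content of a cell as $c_\square=q^{j-1}t^{i-1}$ and expanding both sides of \eqref{eq:diff_eq} in the basis $\{\tH_\mu[X]\tH_\nu[Y]\}$, the orthogonality relation lets me extract the coefficient of $\tH_\mu[X]\tH_\nu[Y]$: on the left one is left with $z^{|\nu|+\ell}\,(\prod_{\square\in\nu}G(c_\square))\,\langle\tH_\mu,\A^{(\ell)}_{G_1}\tH_\nu\rangle_*/\normH{\mu}$ and on the right, after using self-adjointness and symmetry of the star product, with $z^{|\mu|}\,(\prod_{\square\in\mu}G(c_\square))\,\langle\tH_\mu,\A^{(\ell)}_{G_2}\tH_\nu\rangle_*/\normH{\nu}$. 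Matching powers of $z$ forces $|\mu|=|\nu|+\ell$, and after cancelling the common norm factors the whole identity \eqref{eq:diff_eq} becomes equivalent to the family of scalar relations
\[
\Big(\textstyle\prod_{\square\in\nu}G(c_\square)\Big)\,\langle\tH_\mu,\A^{(\ell)}_{G_1}\tH_\nu\rangle_*
=\Big(\textstyle\prod_{\square\in\mu}G(c_\square)\Big)\,\langle\tH_\mu,\A^{(\ell)}_{G_2}\tH_\nu\rangle_*,
\qquad |\mu|=|\nu|+\ell .
\]

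The crux of this part is a \emph{conjugation lemma}: for any weight $F$, the operator $\A^{(\ell)}_F$ acts as a weighted box-adding operator, namely $\A^{(\ell)}_F=\mathcal G_F\,\A^{(\ell)}_{1}\,\mathcal G_F^{-1}$, where $\mathcal G_F$ is the diagonal operator $\mathcal G_F\,\tH_\lambda=\big(\prod_{\square\in\lambda}F(c_\square)\big)\tH_\lambda$ and $\A^{(\ell)}_{1}$ is the unweighted operator. Equivalently, $\langle\tH_\mu,\A^{(\ell)}_F\tH_\nu\rangle_*$ vanishes unless $\nu\subset\mu$ with $|\mu/\nu|=\ell$, in which case it factors as $\big(\prod_{\square\in\mu/\nu}F(c_\square)\big)$ times an $F$-independent constant depending only on $(q,t,\mu,\nu)$. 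I would derive this from the VOA description of \cref{thm:R_vertex_op} together with the reparametrization $\psi$, which is designed precisely so that the coefficient $a_{\alpha_i}$ attached to the $i$-th added cell assembles, upon summation, into the value $F$ at the content of that cell; the explicit action of the $D(z)$-operators on modified Macdonald polynomials supplies the content dependence. Granting the conjugation lemma, Part 1 is immediate: using $\mathcal G_i:=\mathcal G_{G_i}$ and $G=G_1/G_2$, both matrix elements above are proportional to $\langle\tH_\mu,\A^{(\ell)}_1\tH_\nu\rangle_*$ with proportionality constants $\prod_{\square\in\nu}G(c_\square)\cdot\prod_{\square\in\mu}G_1/\prod_{\square\in\nu}G_1$ and $\prod_{\square\in\mu}G(c_\square)\cdot\prod_{\square\in\mu}G_2/\prod_{\square\in\nu}G_2$, which are seen to be equal after a one-line cancellation.

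For the uniqueness statement I would first use the same conjugation lemma to \emph{reduce to the unweighted case} $G_1=G_2=1$. Since each $\mathcal G_i$ is an invertible, self-adjoint, diagonal operator fixing $1$, the map $\Phi\mapsto\widetilde\Phi:=(\mathcal G_1^{-1})_X(\mathcal G_2)_Y\,\Phi$ is a $\KK\llbracket z\rrbracket$-linear bijection of $\Lambda^\KK_X\otimes\Lambda^\KK_Y\llbracket z\rrbracket$ preserving the normalization $[z^0]\,\cdot=1$, and it carries the $G$-weighted equations to the unweighted ones $z^\ell(\A^{(\ell)}_1)_X\widetilde\Phi=((\A^{(\ell)}_1)^{*})_Y\widetilde\Phi$. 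Thus it suffices to prove that, among series with $[z^0]=1$, the reproducing kernel $\tau_1=\sum_\lambda z^{|\lambda|}\tH_\lambda[X]\tH_\lambda[Y]/\normH{\lambda}$ is the only solution. I would organize this by viewing a solution $\Phi$ as the kernel of an operator $\Theta\colon\Lambda_Y\to\Lambda_X\llbracket z\rrbracket$; the equations say exactly that $\Theta$ intertwines the creation operators, $\Theta\,\A^{(\ell)}_1=z^\ell\,\A^{(\ell)}_1\,\Theta$ for all $\ell$. Expanding $\Theta=\sum_n z^n\Theta_n$ and comparing two solutions, the difference $\Xi=\sum_n z^n\Xi_n$ satisfies $\Xi_n\,\A^{(\ell)}_1=\A^{(\ell)}_1\,\Xi_{n-\ell}$ with $\Xi_0=0$, and one is reduced to a recursion in the $z$-grading.

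The main obstacle is the non-degeneracy input that makes this recursion determinate, and it has two faces. Applying the relation inductively shows that each $\Xi_n$ annihilates $\bigcup_\ell\operatorname{Im}\A^{(\ell)}_1$, so I need (A) that these images span $\Lambda_Y$ in every positive degree, forcing $\Xi_n$ to be supported on the vacuum $1_Y$; and then evaluating on $\A^{(\ell)}_1\,1_Y$ yields $\A^{(\ell)}_1\,w_m=0$ for all $\ell$ and all $m$, where $w_m=\Xi_m(1_Y)$, so I need (B) that the creation operators have trivial common kernel, forcing $w_m=0$ and hence $\Xi=0$. Statement (B) follows already from the single operator $\A^{(1)}_1$, whose action on $\tH_\nu$ is the (weighted) up-operator of Young's lattice and is therefore injective by the differential-poset identity $DU-UD=I$. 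Statement (A) is the genuinely delicate point: no single $\ell$ can suffice, since $\A^{(\ell)}_1$ maps a space of dimension $p(d-\ell)$ into one of dimension $p(d)$, so the primitive (lowest-weight) part of each graded piece must be reached by combining \emph{all} $\ell$ simultaneously. I would prove (A) using the explicit monomial/Schur expansions of \cref{thm:explicit_formula} and their product version for $\A^{(\ell_1)}_1\dotsm\A^{(\ell_k)}_1\cdot1$, together with \cref{cor:Negut} identifying these operators with images of Negut elements, to show that the iterated action of $\{\A^{(\ell)}_1\}_{\ell\ge1}$ on $1$ spans $\Lambda$; this cyclicity of the vacuum is where I expect the real work of the uniqueness proof to lie.
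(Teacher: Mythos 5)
Your Part 1 follows essentially the paper's route: the ``conjugation lemma'' you isolate is exactly \cref{lem:DeltaG_A+} (in the equivalent form $\A^{(\ell)}_F=\Pi_F\,\A^{(\ell)}_1\,\Pi_F^{-1}$), and combined with orthogonality of the $\tH_\lambda$ it yields \eqref{eq:diff_eq} just as you describe (this is the paper's \cref{lem:diff_eq}). The gap is in how you propose to \emph{prove} that lemma. You want to read it off from the path-operator definition \eqref{eq:PathOperatorsA} of $\A_F^{(\ell)}$, asserting that the weights $a_{\alpha_i}$ ``assemble into $F$ evaluated at the content of the added cell.'' That assembly is not a reparametrization trick; it is the main technical content of the result. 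The paper instead defines $\A_F^{(\ell)}$ by iterated commutators of $-e_1[X]$ and $D_0$ (see \eqref{eq:A_F} and \eqref{def:AFell}), for which the conjugation property is a two-line consequence of the Pieri rule and the diagonal action of $D_0$ (\cref{thm:Macdonald}), and then devotes all of \cref{sec:path_op} (the commutation relations of \cref{thm:A_n-P_n} and \cref{thm:P_commutation}) to proving that these commutators coincide with the path-operator sums. Without that bridge, your conjugation lemma for the operators as they are defined in the theorem statement is unproven.

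Your uniqueness argument is genuinely different from the paper's and, as written, circular. You claim that ``applying the relation inductively shows that each $\Xi_n$ annihilates $\bigcup_\ell\operatorname{Im}\A_1^{(\ell)}$,'' but the relation only gives $\Xi_n(\A_1^{(\ell)}f)=\A_1^{(\ell)}\Xi_{n-\ell}(f)$; taking $f=1$ this equals $\A_1^{(\ell)}w_{n-\ell}$, which vanishes only if $w_{n-\ell}=0$ --- precisely what you intend to deduce \emph{from} that annihilation. Conditions (A) and (B) do not close the loop: a solution of the recursion is determined on the spanning set $\{\A_1^{(\ell_1)}\dotsm\A_1^{(\ell_k)}\cdot 1\}$ by the free data $w_m=\Xi_m(1)$, subject only to the consistency requirement that $\sum_{w}c_{w}\,\A_1^{(\ell_1)}\dotsm\A_1^{(\ell_k)}w_m=0$ for every linear relation $\sum_{w}c_{w}\,\A_1^{(\ell_1)}\dotsm\A_1^{(\ell_k)}\cdot 1=0$ among words $w=(\ell_1,\dots,\ell_k)$, so uniqueness needs the common kernel of all these relation operators to be trivial --- something neither (A) (joint surjectivity onto positive degrees) nor (B) (trivial common kernel of the $\A_1^{(\ell)}$) supplies. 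The paper avoids this with a stronger input, \cref{prop:basis_a_lambda}: the ordered products $\aaa_{F,\lambda}$ form a \emph{basis} of $\Lambda^\KK$ (proved via the $q=t=1$ specialization of the path operators and triangularity against $e_\lambda$), so that pairing the iterated equations against $z^{|\lambda|}p_\emptyset[Y]$ reads off the coefficients of $\tau$ in the dual basis $\bbb_{G_2,\lambda}$ directly. Finally, your justification of (B) via the identity $DU-UD=I$ does not apply to the $(q,t)$-weighted operator $\A_1^{(1)}=\mcQ_0$; its injectivity is true but should be argued otherwise, e.g.\ by rank semicontinuity from the specialization $q=t=1$, where $\mcQ_0$ reduces to multiplication by $-e_1$.
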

We briefly describe the main steps of the proof of this theorem:
\begin{itemize}
    \item We prove that all the operators $\A_F^{(\ell)}$ can be recursively built from commutators of two elementary operators, $e_1$ and $D_0$. This part of the proof is purely combinatorial and will be carried out in \cref{sec:path_op}.
    \item We use the commutation relations established in the first step and the fact that the operators $e_1$ and $D_0$ act nicely on the function $\tau_G$ to prove \cref{eq:diff_eq} by induction on $\ell$. This will be the main goal of \cref{sec:fun_eq}.
\end{itemize}

\subsection{Outline of the paper}
In Section \ref{sec:PathOperators}, we describe path operators in details and prove some explicit expressions such as those of \Cref{thm:R_vertex_op} and \Cref{thm:explicit_formula}. In Section \cref{sec:fun_eq}, we introduce the $G$-weighted $(q,t)$-tau function and prove a version of the functional equation \eqref{eq:diff_eq} where the operators acting on the LHS and RHS are not defined in terms of path operators. In Section \ref{sec:path_op} we conclude the proof of \Cref{thm:dif_eq'} by showing that these operators are indeed the path operators defined in \cref{eq:PathOperatorsA}.

\section{Path operators and the \texorpdfstring{$D$}{D}-operator} \label{sec:PathOperators}
\subsection{Preliminaries}
\label{sec:Prelim}

Let $\Lambda$ denote the algebra of symmetric functions in the alphabet $X = x_1+x_2+\dotsb$ with coefficients in $\QQ(q,t)$. We denote the classical bases such as monomial, elementary, homogeneous, power-sum, and Schur symmetric functions by $m_\lambda,e_\lambda,h_\lambda,p_\lambda, s_\lambda$, where $\lambda$ is an integer partition, which we denote by $\lambda \in \YY$. The \emph{dominance order} $\leq$ is the partial order on $\YY$ defined by 
$$\mu\leq\lambda \iff |\mu|=|\lambda| \text{ and }\hspace{0.3cm} \mu_1+...+\mu_i\leq \lambda_1+...+\lambda_i \text{ for } i\geq1.$$

For $i\geq 1$, we denote $m_i(\lambda)$ the number of parts of size $i$ in $\lambda$. We then set 
\begin{equation*}\label{eq zlambda}
z_\lambda:=\prod_{i\geq1}m_i(\lambda)!i^{m_i(\lambda)}.  
\end{equation*}

The \emph{Hall scalar product}  $\langle.,.\rangle$ on $\Lambda$ is defined by 
\begin{equation*}\label{eq scalar product}
  \langle p_\mu,p_\nu\rangle=\delta_{\mu,\nu}z_\mu, \qquad \text{for any $\mu,\nu\in\YY$,} 
\end{equation*}
where $\delta_{\mu,\nu}$ is the Kronecker delta. We also have $\langle m_\mu,h_\nu\rangle=\delta_{\mu,\nu}$, for  $\mu,\nu\in\YY$. If $f\in\Lambda$, then we denote $f^\perp$ the dual of the multiplication by $f$, with respect to this scalar product.

For any symmetric function $f \in \Lambda$ and an expression $A$ involving indeterminates, we define the plethystic substitution $f[A]$ by first defining it on the basis of power-sum symmetric functions $p_k[A]$ by the substitution $a \mapsto a^k$ for each indeterminate $a$ occurring in $A$, and then extending on the whole algebra of symmetric functions so that $f \mapsto f[A]$ is a homomorphism. For instance $p_k[X] := \sum_{i \geq 1} x_i^k$ and 
\begin{align*}
	p_k[aX] &= a^kp_k[X] \quad \text{for $a\in\{q,t\}$}\\
	p_k[cX] &= cp_k[X] \quad \text{for $c\in\QQ$}.
\end{align*} 
An expression we will encounter often is $M:=(1-q)(1-t) \in \QQ(q,t)$ and hence $p_k[MX] = (1-q^k)(1-t^k)p_k[X]$.

For an expression A, define the associated \emph{plethystic exponential} by
\[\Omega[A]: = \sum_{n\geq 0} h_n[A] = e^{\sum_{i>0}\frac{p_i[A]}{i}}.\]
In particular, the Cauchy and dual Cauchy kernels are respectively given by
$$\Omega[X]=\sum_{n\geq 0} h_n[X] = \prod_{k\geq 1}\frac{1}{1-x_k},\qquad\text{and}\qquad\Omega[-X]=\sum_{n\geq 0} (-1)^n e_n[X] = \prod_{k\geq 1}(1-x_k).$$

Following \cite{GarsiaHaimanTesler1999}, we consider the operators that act on $f[X]\in\Lambda$ like
\begin{equation*}
	\begin{aligned}
	\mcP_Y f[X] &= \Omega[XY] f[X],\\
	\mcT_Y f[X] &=\sum_{k\geq 0}h^\perp_{k}[Y]f[X]= f[X+Y].
	\end{aligned}
\end{equation*} 
We have the following commutation relations between operators acting on
$\Lambda_X$ (see \cite[Theorem 1.1 and Proposition 1.2]{GarsiaHaimanTesler1999}):
\begin{equation}\label{eq:mcT-mcP'}
 \mcT_Y\cdot \mcP_Z=\Omega[YZ]\mcP_Z\cdot \mcT_Y,   
\end{equation}
An important role in the theory of Macdonald polynomials is played by the operators $D_k$, $k\in\mathbb{Z}$ from \cite{GarsiaHaimanTesler1999}. They are defined by the following equality of formal series
\[D(z) = \sum_{k\in\mathbb{Z}} D_k z^k = \mcP_{-z} \mcT_{M/z}.\]
Since $\mcP_{-z} = \sum_{n\geq 0} (-1)^n e_n[X] z^n$ and $\mcT_{M/z} = e^{\sum_{i>0} \frac{z^{-i}}{i} p_i^\perp[MX]} = \sum_{n\geq 0} h_n^\perp[MX] z^{-n}$, it comes that
\begin{equation}
\label{eq:Dk}
	D_k = \sum_{\substack{m, n\geq 0\\ m-n=k}} (-1)^m e_m[X] h_n^\perp[MX].
\end{equation}
In particular, $D_k$ is equal to the path operator $\mcR_{(k)}$ defined in \cref{def:decorated_paths}:
\begin{equation} \label{2steps}
	\mcR_{(k)} = \sum_{\substack{m, n\geq 0\\ m-n=k}} \mcO(m) \mcO(-n) = \sum_{\substack{m, n\geq 0\\ m-n=k}} (-1)^m e_m[X] h_n^\perp[MX] = D_k.
\end{equation}

\subsection{Decomposition at fixed valley heights and proof of Theorem \texorpdfstring{\ref{thm:R_vertex_op}}{}}
Fix a sequence $\beta=(\beta_1,\dots,\beta_\ell)\in\mathbb{Z}^\ell$. If $\gamma\in\bR_\beta$ has steps $(\gamma_1, \dotsc, \gamma_{2\ell})$, define
\[r_j(\gamma) = \gamma_{2j-1}+\gamma_{2j},
\]
for $j=1, \dotsc, \ell$. These quantities are the increments between the heights of consecutive valleys (including the endpoint). Equivalently, if $\gamma$ has points $(w_0, \dotsc, w_{2\ell})$ with coordinates $w_j = (j, y_j)$, then $r_j(\gamma) = y_{2j}-y_{2j-2}$ for $j=1, \dotsc, \ell$. Clearly, the set of possible values $(r_1,\dots,r_\ell)\in\ZZ^\ell$ such that there exists a path $\gamma\in\mcR_\beta$ with $r_j(\gamma) = r_j$ for all $j=1, \dotsc, \ell$ is
\begin{multline}\label{eq:VI}
	\VI_\beta=\left\{(r_1,\dots,r_\ell)\in\ZZ^\ell\big| \sum_{1\leq i\leq j}r_i\geq \sum_{1\leq i\leq j}\beta_i \text{ for $1\leq j< \ell$,} \text{ and }\sum_{1\leq i\leq \ell}r_i= \sum_{1\leq i\leq \ell}\beta_i.\right\}  
\end{multline}
and the set of corresponding paths is denoted $\bR_\beta(r_1, \dotsc, r_\ell)$. The valley weights of those paths, as defined in \cref{eq:valley_weight_PMAP}, only depend on $(r_1,\dots,r_\ell)$, so we define
\[\vweight_\beta(r_1,\dots,r_\ell) \coloneqq \vweight_\beta(\gamma) = \prod_{1\leq j\leq \ell}(qt)^{\sum_{1\leq i\leq j}(r_i-\beta_i)}.
\]
for any path $\gamma\in\bR_\beta(r_1, \dotsc, r_\ell)$.
\begin{exe}
    The list of increments of the valley heights of the path $\gamma$ given in \cref{fig:alternating_path} is $(2,-3,2,2)$. Since $\gamma\in \bR_{(1,-2,1,3)}$, this implies that $(2,-3,2,2)\in\VI_{(1,-2,1,3)}$.
\end{exe}
We have the following formula for the operator $\mcR_\beta$, where instead of summing over all paths we sum over valley increments.
\begin{prop}\label{prop:Qalpha_D}
	For any $\beta\in \ZZ_{\ell}$,
	\[\mcR_\beta = \sum_{(r_1,\dotsc,r_\ell)\in\VI(\beta)} \vweight_{\beta}(r_1,\dots,r_\ell) [z_1^{r_1}\dotsm z_\ell^{r_\ell}]D(z_1)\dotsm D(z_\ell).
	\]
\end{prop}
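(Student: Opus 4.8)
The plan is to group the defining sum $\mcR_\beta = \sum_{\gamma\in\bR_\beta}\mcO_\beta(\gamma)$ according to the increments $(r_1(\gamma),\dots,r_\ell(\gamma))$ of the valley heights. For a path $\gamma\in\bR_\beta$ with steps $(\gamma_1,\dots,\gamma_{2\ell})$ I write $\gamma_{2j-1}=m_j\geq 0$ and $\gamma_{2j}=-n_j\leq 0$, so that $r_j(\gamma)=m_j-n_j$. Every path in $\bR_\beta$ realizes some tuple $(r_1,\dots,r_\ell)\in\VI_\beta$, and since $\vweight_\beta(\gamma)$ depends only on that tuple, the first step is the bookkeeping rearrangement
\[\mcR_\beta = \sum_{(r_1,\dots,r_\ell)\in\VI_\beta}\vweight_\beta(r_1,\dots,r_\ell)\sum_{\gamma\in\bR_\beta(r_1,\dots,r_\ell)}\mcO(\gamma_1)\dotsm\mcO(\gamma_{2\ell}).\]
With the weight already pulled out, it then suffices to identify the inner operator sum with $D_{r_1}\dotsm D_{r_\ell} = [z_1^{r_1}\dotsm z_\ell^{r_\ell}]D(z_1)\dotsm D(z_\ell)$.

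The key identity for the inner sum is the two-step factorization recorded in \cref{2steps}: for each $j$ one has $D_{r_j} = \sum_{m_j-n_j=r_j,\, m_j,n_j\geq 0}\mcO(m_j)\mcO(-n_j)$, so multiplying out gives
\[D_{r_1}\dotsm D_{r_\ell} = \sum_{\substack{(m_j,n_j)_{j=1}^\ell\\ m_j-n_j=r_j}}\mcO(m_1)\mcO(-n_1)\dotsm\mcO(m_\ell)\mcO(-n_\ell),\]
where the summands are indexed exactly by the alternating paths whose valley increments equal $(r_1,\dotsc,r_\ell)$. Thus the proposition reduces to the combinatorial claim that $\bR_\beta(r_1,\dots,r_\ell)$ is \emph{precisely} the set of all such paths, i.e. that for a fixed tuple $(r_1,\dots,r_\ell)\in\VI_\beta$ every choice of non-negative $(m_j,n_j)$ with $m_j-n_j=r_j$ yields a path that stays weakly above $\gamma_\beta$.

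The main obstacle is exactly this last claim: membership in $\VI_\beta$ only constrains the heights at the valleys (even $x$-coordinates), whereas membership in $\bR_\beta$ also demands that the peaks (odd $x$-coordinates) stay weakly above those of $\gamma_\beta$. I would therefore verify that the peak constraint is automatic. The height of $\gamma$ at the peak $2j-1$ is $y_{2j-2}+m_j$, and since $m_j\geq\max(0,r_j)$ and $y_{2j-2}=\sum_{i\leq j-1}r_i\geq\sum_{i\leq j-1}\beta_i=y_{2j-2}^\beta$ by the $\VI_\beta$ inequalities, one checks the two cases: when $\beta_j\geq 0$ the peak height of $\gamma_\beta$ equals $y_{2j}^\beta$, and $y_{2j-2}+m_j\geq y_{2j-2}+r_j=y_{2j}\geq y_{2j}^\beta$; when $\beta_j<0$ the peak height of $\gamma_\beta$ is $y_{2j-2}^\beta$, and $y_{2j-2}+m_j\geq y_{2j-2}\geq y_{2j-2}^\beta$ already from $m_j\geq 0$. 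This shows the peaks never drop below $\gamma_\beta$, so $\bR_\beta(r_1,\dots,r_\ell)$ indeed coincides with the full index set of the product $D_{r_1}\dotsm D_{r_\ell}$, and the proposition follows. A minor point to confirm is that the rearrangement into the doubly-indexed sum is legitimate as an identity of operators on $\Lambda$, which is guaranteed by the annihilation property of \cref{rem:remark_path_op} that makes each of these sums act with only finitely many nonzero terms on any fixed homogeneous input.
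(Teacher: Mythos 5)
Your proof is correct and follows essentially the same route as the paper's: group the paths by their valley increments, pull out the valley weight (which depends only on those increments), and resum the peak heights pairwise using the factorization $D_{r_j}=\sum_{m-n=r_j}\mcO(m)\mcO(-n)$ from \cref{2steps}. The only difference is that you explicitly verify that the peak heights automatically stay weakly above those of $\gamma_\beta$ once the valley constraints of $\VI_\beta$ hold — a point the paper treats as immediate — and your verification of it is correct.
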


\begin{proof}
	We start from
	\[
	\begin{aligned}
		\mcR_\beta &= \sum_{(r_1,\dotsc,r_\ell)\in\VI(\beta)} \sum_{\gamma\in\bR_\beta(r_1, \dotsc, r_\ell)} \vweight_\beta(\gamma) \mcO(\gamma_1)\dotsm \mcO(\gamma_{2\ell})\\
		&= \sum_{(r_1,\dotsc,r_\ell)\in\VI(\beta)} \vweight_\beta(r_1, \dotsc, r_\ell) \sum_{\gamma\in\bR_\beta(r_1, \dotsc, r_\ell)} \mcO(\gamma_1)\dotsm \mcO(\gamma_{2\ell}).
	\end{aligned}
	\]
	Then we can perform the sum over the heights of peaks at fixed valley heights by using the following (see \cref{2steps})
	\[\sum_{\substack{\gamma_{2j-1}\geq 0, \gamma_{2j}\leq 0\\ \gamma_{2j-1} + \gamma_{2j}= r_j}} \mcO(\gamma_{2j-1}) \mcO(\gamma_{2j}) = D_{r_j} = [z^{r_j}] D(z).\qedhere
	\]
\end{proof}

We will use the following straightforward lemma.
\begin{lem}\label{lem:LargerPart}
	Let $n\in\ZZ$, and let $\mcO(z)=\sum_{i\in\ZZ}z^i\mcO_i$ be a field. Then
	\[ [z^n] \frac{\mcO(z)}{1-qt\frac{w}{z}} = \sum_{i\geq n}(qtw)^{i-n} \mcO_i \]
	where the denominator on the LHS is a series in $w/z$.
\end{lem}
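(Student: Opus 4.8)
My plan is to treat both sides as elements of $\End[\Lambda]\llbracket z,z^{-1}\rrbracket\llbracket w\rrbracket$, to expand the denominator as the prescribed geometric series in $w/z$, and then to extract coefficients in the order ``$[w^m]$ first, $[z^n]$ second''. Since the denominator is by hypothesis a formal power series in $w/z$, I would write
\[ \frac{1}{1-qt\frac{w}{z}} = \sum_{m\geq 0} (qt)^m w^m z^{-m}, \]
so that
\[ \frac{\mcO(z)}{1-qt\frac{w}{z}} = \sum_{m\geq 0}(qt)^m w^m z^{-m}\mcO(z) = \sum_{m\geq 0}(qt)^m w^m \sum_{i\in\ZZ} z^{i-m}\mcO_i. \]
This is a genuine formal power series in $w$ whose $w^m$-coefficient, namely $(qt)^m z^{-m}\mcO(z)$, is a well-defined formal Laurent series in $z$; no infinite operator sums are incurred at this stage.

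Next I would extract $[z^n]$ from the $w^m$-coefficient: this forces $i-m=n$, i.e. $i=n+m$, and yields $(qt)^m\mcO_{n+m}$. Summing over $m\geq 0$ and reindexing by $i=n+m\geq n$ then gives
\[ [z^n]\frac{\mcO(z)}{1-qt\frac{w}{z}} = \sum_{m\geq 0}(qt)^m w^m \mcO_{n+m} = \sum_{i\geq n}(qtw)^{i-n}\mcO_i, \]
which is exactly the claimed identity.

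The only point requiring care -- and the reason the field hypothesis on $\mcO(z)$ is invoked -- is to ensure these manipulations take place inside a ring where they are legitimate. Once the $w^m$-coefficients are read off first, the right-hand side is manifestly a formal power series in $w$ with single-operator coefficients $(qt)^m\mcO_{n+m}$, so there is never an ill-defined infinite sum of operators applied to a fixed $f\in\Lambda$; the fieldness of $\mcO(z)$ guarantees compatibility with the ambient conventions of the preceding subsections, in particular that the product defining the left-hand side genuinely lies in $\End[\Lambda]\llbracket z,z^{-1}\rrbracket\llbracket w\rrbracket$. I expect no substantive obstacle here: the statement is an exercise in formal-series bookkeeping, and its entire content is the geometric expansion together with the diagonal condition $i=n+m$.
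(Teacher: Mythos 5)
Your computation is correct and is exactly the intended argument: the paper states this as a ``straightforward lemma'' and omits the proof entirely, so there is nothing to diverge from. Expanding the geometric series in $w/z$, extracting $[z^n]$ to force $i=n+m$, and reindexing by $i=n+m\geq n$ is all that is needed, and your remark that each $w^m$-coefficient involves only the single operator $(qt)^m\mcO_{n+m}$ correctly disposes of any well-definedness concern.
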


\begin{proof}[Proof of \cref{thm:R_vertex_op}]
	We will prove the theorem by induction on $\ell$. The case $\ell=1$ corresponds to \cref{2steps}, so suppose that $\ell >1$ and define for $1\leq m< \ell$ the set of all possible $m$ first valley increments
	\[\VI^{(m)}_\beta = \left\{(r_1, \dotsc, r_m)\in\ZZ^m \big| \sum_{1\leq i\leq j} r_i \geq \sum_{1\leq i\leq j} \beta_k\text{ for } 1\leq j\leq m\right\}. \]
	Let us consider for $1\leq m\leq \ell-1$ the intermediate operators
	\[\mcR_{\beta}^{(m)}(w) = \sum_{(r_1, \dotsc, r_m)\in\VI_\beta^{(m)}} D_{r_1} (qt)^{r_1-\beta_1} D_{r_2} (qt)^{r_1+r_2-\beta_1-\beta_2} \dotsm D_{r_{m}} (qtw)^{\sum_{k=1}^{m} (r_k - \beta_k)}.\]
	Such an operator can be thought of as the operator obtained by considering only the possible first $2m$ steps of paths in $\bR_\beta$ counted with an extra weight $w^j$, where $j\geq 0$ is the $\beta$-height of the $m$-th valley.
	
	For $m=1$, we have $$\mcR_{\beta}^{(1)}(z_2) = \sum_{r_1\geq \beta_1}(qtz_2)^{r_1-\beta_1} D_{r_1}  = [z_1^{\beta_1}] \frac{D(z_1)}{1-qt\frac{z_2}{z_1}}$$ by \cref{lem:LargerPart}. By induction, one finds 
	$$\mcR_\beta^{(m+1)}(z_{m+2}) = \sum_{k\geq \beta_{m+1}}z_{m+2}^{k-\beta_{m+1}}[z_{m+1}^k]\mcR_\beta^{(m)}(z_{m+1})D(z_{m+1})=[z_{m+1}^{\beta_{m+1}}] \frac{\mcR_\beta^{(m)}(z_{m+1})D(z_{m+1})}{(1-qtz_{m+2}/z_{m+1})}$$ 
	for any $1\leq m\leq \ell-2$. Here, we used \cref{lem:LargerPart} with $\mcO(z)=\mcR_\beta^{(m)}(z_{m+1})D(z_{m+1}).$  In particular, for $m=\ell-2$,
	\[\mcR_{\beta}^{(\ell-1)}(z_{\ell}) = [z_1^{\beta_1} \dotsm z_{\ell-1}^{\beta_{\ell-1}}] \frac{D(z_1) \dotsm D(z_{\ell-1})}{\prod_{i=1}^{\ell-1} \bigl(1-qt z_{i+1}/z_i\bigr)}.\]
	Finally, we get
	\[\mcR_\beta = [z_\ell^{\beta_\ell}] \mcR_\beta^{(\ell-1)}(z_\ell) D(z_\ell) = [z_1^{\beta_1} \dotsm z_{\ell}^{\beta_{\ell}}] \frac{D(z_1) \dotsm D(z_{\ell})}{\prod_{i=1}^{\ell-1} \bigl(1-qt z_{i+1}/z_i\bigr)},\]
	where the first equality follows from \cref{prop:Qalpha_D}.	This finishes the proof of the theorem.
\end{proof}

\subsection{Concatenation of paths}
We now give a VOA formula for a composition of path operators. 
Fix an integer $m\geq 1$, and $\ell_1, \dotsc, \ell_m \geq 0$ and consider a concatenation of $m$ sequences of respective lengths $(\ell_1,\ell_2,\dots,\ell_m)$,
$$\bfbeta=(\beta^{(1)},\beta^{(2)},\dots,\beta^{(m)})\in \ZZ^{\ell_1}\times\dotsb\times \ZZ^{\ell_m}.$$ 
Such concatenation can be written
$\bfbeta=(\beta_1,\dots,\beta_\ell),$
where $\ell:=\ell_1+\dots+\ell_m$. We call \emph{the total size}  of $\bfbeta$ the integer $\sum_{1\leq i\leq \ell}\beta_i$.

We define 
\[\bR_{\bfbeta}:=\bR_{\beta^{(1)}}\times\dots\times\bR_{\beta^{(m)}}\]
the set of sequences of $m$ paths $(\gamma^{(1)},\dotsc,\gamma^{(m)})$ with $\gamma^{(i)}\in\bR_{\beta^{(i)}}$. Similarly, we define the composition of operators
$$\mcR_{\bfbeta}:=\mcR_{\beta^{(1)}}\dotsm\mcR_{\beta^{(m)}}=\sum_{(\gamma^{(1)},\dotsc,\gamma^{(m)})\in \bR_{\bfbeta}} \mcO_{\beta^{(1)}}(\gamma^{(1)})\dotsm\mcO_{\beta^{(m)}}(\gamma^{(m)})$$
and the valley increments associated to $\bfbeta$
$$\VI_{\bfbeta}:=\VI_{\beta^{(1)}}\times\dots\times\VI_{\beta^{(m)}},$$
and we extend the valley weight defined in \eqref{eq:valley_weight_PMAP} by multiplicativity.

We have the following immediate corollary from \Cref{thm:R_vertex_op}.
\begin{cor}\label{cor:R_vertex_op}
	Fix $\bfbeta=(\beta^{(1)},\beta^{(2)},\dots,\beta^{(m)})=(\beta_1,\dots,\beta_\ell)$, and let $L_k := \sum_{i=1}^k \ell_i$ with the convention $L_0 = 0$. Then
	\begin{equation}\label{eq:R_vertex_op}
		\mcR_{\bfbeta} :=\mcR_{\beta^{(1)}}\dotsm\mcR_{\beta^{(m)}}= [z_1^{\beta_1} \dotsm z_{\ell}^{\beta_{\ell}}] \frac{D(z_1) \dotsm D(z_{\ell})}{\prod_{k=1}^{m}\prod_{i=L_{k-1}+1}^{L_k-1} \bigl(1-qt z_{i+1}/z_i\bigr)}.    	\end{equation}
\end{cor}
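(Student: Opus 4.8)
The plan is to reduce the product formula for $\mcR_{\bfbeta}$ to a single application of \Cref{thm:R_vertex_op} (the single-path case) by treating each factor $\mcR_{\beta^{(k)}}$ through its own VOA expression and then observing that the coefficient-extraction operations stack cleanly. First I would write each factor using \Cref{thm:R_vertex_op}: for the block $\beta^{(k)} = (\beta_{L_{k-1}+1}, \dotsc, \beta_{L_k})$ of length $\ell_k$, introduce a fresh family of auxiliary variables $z_{L_{k-1}+1}, \dotsc, z_{L_k}$ so that
\[
	\mcR_{\beta^{(k)}} = [z_{L_{k-1}+1}^{\beta_{L_{k-1}+1}} \dotsm z_{L_k}^{\beta_{L_k}}] \frac{D(z_{L_{k-1}+1}) \dotsm D(z_{L_k})}{\prod_{i=L_{k-1}+1}^{L_k-1} \bigl(1-qt z_{i+1}/z_i\bigr)}.
\]
The key point is that the auxiliary variables used in distinct blocks are independent, so the overall composition $\mcR_{\beta^{(1)}} \dotsm \mcR_{\beta^{(m)}}$ is obtained by extracting the appropriate monomial in \emph{all} the variables $z_1, \dotsc, z_\ell$ simultaneously from the product of the $m$ numerators over the product of the $m$ denominators. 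This yields exactly the right-hand side of \cref{eq:R_vertex_op}, since the combined denominator is precisely $\prod_{k=1}^{m}\prod_{i=L_{k-1}+1}^{L_k-1}(1-qt z_{i+1}/z_i)$ — there is no coupling term $(1-qt z_{L_k+1}/z_{L_k})$ bridging consecutive blocks, which is exactly what distinguishes this product formula from the connected single-path case.

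The one genuine subtlety I would address carefully is the commutation of the coefficient-extraction operators with the operator product: extracting $[z_{L_{k-1}+1}^{\beta_{L_{k-1}+1}} \dotsm z_{L_k}^{\beta_{L_k}}]$ from factor $k$ and then composing must agree with extracting the joint monomial from the full product of fields. This is legitimate because the operator $D(z_i)$ for indices $i$ in block $k$ acts on $\Lambda$ but commutes — as a formal manipulation of Laurent series — with the scalar coefficient-extraction $[z_j^{\beta_j}]$ in variables $z_j$ belonging to a \emph{different} block, and because the product of fields $D(z_1) \dotsm D(z_\ell)$ divided by the block-diagonal denominator still has well-defined endomorphism coefficients (by the well-definedness discussion preceding \Cref{thm:R_vertex_op}, applied blockwise, since each block's denominator is expanded as a series in $z_{i+1}/z_i$ for $i$ within that block). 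Concretely, I would argue that for any fixed $f \in \Lambda$, only finitely many monomials in each numerator-denominator block contribute, so all the infinite sums truncate and Fubini-type rearrangement is valid.

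The main obstacle — though it is more bookkeeping than conceptual difficulty — is verifying that the field product together with the blockwise denominator genuinely defines an endomorphism of $\Lambda$, i.e.\ that the iterated coefficient extraction can be performed in a single step without convergence issues. The cleanest route is to invoke \Cref{prop:Qalpha_D} blockwise and the multiplicativity of $\VI_{\bfbeta} = \VI_{\beta^{(1)}} \times \dotsm \times \VI_{\beta^{(m)}}$ and of the valley weight, so that
\[
	\mcR_{\bfbeta} = \sum_{(r_1,\dotsc,r_\ell)\in\VI_{\bfbeta}} \vweight_{\bfbeta}(r_1,\dotsc,r_\ell)\, [z_1^{r_1}\dotsm z_\ell^{r_\ell}] D(z_1) \dotsm D(z_\ell),
\]
reducing everything to the single-block computation already established in the proof of \Cref{thm:R_vertex_op}, applied independently on each of the $m$ factors and then multiplied. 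Since \Cref{thm:R_vertex_op} is assumed, this makes the corollary essentially immediate, and I expect the entire argument to occupy only a few lines once the independence of the auxiliary variable families is stated.
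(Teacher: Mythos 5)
Your proof is correct and takes essentially the same route as the paper, which states this as an immediate consequence of \Cref{thm:R_vertex_op}: since the variable blocks are disjoint, the joint coefficient extraction from the product of the $m$ block expressions factors as the product of the blockwise extractions (each cross-block coefficient is literally a product of single coefficients, with no infinite summation), and each blockwise factor is exactly the expression for $\mcR_{\beta^{(k)}}$ given by the theorem. Your worry about convergence across blocks is harmless but unnecessary for precisely this reason; only the within-block geometric series require the field property, which \Cref{thm:R_vertex_op} already handles.
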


\subsection{Normal ordering and non-consecutive indices}
\label{subsec:NormalOrdering}
Recall that $D(z) = \mcP_{-z} \mcT_{M/z}$ where $\mcP_{-z}$ acts by multiplication and $\mcT_{M/z}$ by translation. Normal ordering a product of $D$-operators consists in rewriting it such that all the translation operators are on the right of the multiplication operators. The elementary relation for normal ordering follows from \eqref{eq:mcT-mcP'} and is given by
\begin{equation*}
	\label{eq:NormalOrd}
	\mcT_{M/z} \mcP_{-w} = \omega(w/z) \mcP_{-w} \mcT_{M/z},
\end{equation*}
with $\omega(x)$ being the following series in $x$, 
\[\omega(x) = \Omega[-Mx] = \frac{(1-x)(1-qtx)}{(1-qx)(1-tx)}.\]
This gives
\begin{equation} \label{NormalOrdering}
	D(z) D(w) = \omega(w/z) \mcP_{-w-z} \mcT_{M/z+M/w}.
\end{equation}

Before plugging the above formula into \Cref{cor:R_vertex_op}, we define the set of \emph{non-consecutive indices} $\mcNI_{\bfbeta}$ of $\bfbeta$ as the set of pairs $(i,j)$ with $i<j$ that are not consecutive as integers, i.e. $j>i+1$, or that belong to two different sequences $\beta^{(n_i)}$ and $\beta^{(n_j)}$ in $\bfbeta$. More formally: 
\begin{multline}
	\mcNI_{\bfbeta}:=
	\left\{(i,j)|1\leq i<j\leq \ell, \text{ such that }i<j-1 \right.\\
	\left.\text{ or } (i=j-1 \text{ and }i\leq L_k<j \text{ for some }1\leq k\leq m)\right\},  
\end{multline}
where we denote the partial sums $L_k=\sum_{i=1}^k \ell_i$. In other terms, $\mcNI_{\bfbeta}$ is defined so that \eqref{eq:R_vertex_op} can be rewritten as
\begin{equation}
	\label{eq:R_vertex_opPrim}
	\mcR_{\bfbeta} = [z_1^{\beta_1} \dotsm z_{\ell}^{\beta_{\ell}}] \frac{D(z_1) \dotsm D(z_{\ell})}{\prod_{\substack{1 \leq i < j \leq \ell\colon\\ (i,j) \notin \mcNI_{\bfbeta}}} \bigl(1-qt z_{j}/z_i\bigr)}.
\end{equation}
Note that $\mcNI_{\bfbeta}$ depends only on the sequence $(\ell_1,\ell_2,\dots,\ell_m)$ and not $\bfbeta$ itself.
\begin{exe}
	If $\bfbeta=((2,0,0),(3))$, then $\mcNI_{\bfbeta}=\left\{\right(1,3),(1,4),(2,4),(3,4)\}.$
\end{exe}

\begin{thm}\label{thm:explicit_formula2}
	We set $\bM:=q+t-1$. Fix $\bfbeta=(\beta^{(1)},\beta^{(2)},\dots,\beta^{(m)})=(\beta_1,\dots,\beta_\ell)$. Then 
	\begin{equation}\label{eq:explicit_formula}
		\mcR_{\bfbeta}\cdot1 = \left[z_1^{\beta_1}\dotsm z_\ell^{\beta_\ell}\right]\Omega\left[\bM\sum_{1\leq i< j\leq \ell}z_j/z_i-qt\sum_{(i,j)\in\mcNI_{\bfbeta}}z_j/z_i-\sum_{1\leq i\leq \ell}z_iX\right].
	\end{equation}
\end{thm}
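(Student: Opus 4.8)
The plan is to start from the vertex-operator expression for $\mcR_{\bfbeta}$ furnished by \Cref{cor:R_vertex_op}, namely
\[\mcR_{\bfbeta} = [z_1^{\beta_1} \dotsm z_{\ell}^{\beta_{\ell}}] \frac{D(z_1) \dotsm D(z_{\ell})}{\prod_{(i,j) \notin \mcNI_{\bfbeta}} \bigl(1-qt z_{j}/z_i\bigr)},\]
and to evaluate the numerator on the constant function $1$ after putting the product $D(z_1)\dotsm D(z_\ell)$ in normal-ordered form. Since $D(z)=\mcP_{-z}\mcT_{M/z}$ with $\mcP$ a multiplication operator and $\mcT$ a translation operator, and since $\mcT_Y\cdot 1 = 1$ while $\mcP_Y\cdot 1=\Omega[XY]$, normal ordering will reduce the whole expression to an explicit scalar Laurent series times $\Omega[-\sum_i z_i X]$.

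First I would establish the $\ell$-fold normal ordering identity
\[D(z_1)\dotsm D(z_\ell) = \prod_{1\leq i<j\leq\ell}\omega(z_j/z_i)\cdot \mcP_{-z_1-\dots-z_\ell}\,\mcT_{M/z_1+\dots+M/z_\ell}.\]
This follows by repeatedly applying the elementary swap $\mcT_{M/z}\mcP_{-w}=\omega(w/z)\mcP_{-w}\mcT_{M/z}$ (the normal-ordering relation of \cref{subsec:NormalOrdering}): moving every $\mcT_{M/z_i}$ to the right past each $\mcP_{-z_j}$ with $j>i$ produces exactly one factor $\omega(z_j/z_i)$ per pair $i<j$, while the multiplication (resp. translation) operators combine additively via $\mcP_Y\mcP_Z=\mcP_{Y+Z}$ (resp. $\mcT_Y\mcT_Z=\mcT_{Y+Z}$). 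Applying this to $1$ and using $\mcP_{-z_1-\dots-z_\ell}\cdot 1=\Omega[-\sum_i z_iX]$ gives
\[D(z_1)\dotsm D(z_\ell)\cdot 1 = \prod_{1\leq i<j\leq\ell}\omega(z_j/z_i)\cdot\Omega\Bigl[-\sum_{i=1}^\ell z_iX\Bigr].\]

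It then remains to simplify the scalar prefactor and rewrite it plethystically. Writing $\omega(x)=\tfrac{(1-x)(1-qtx)}{(1-qx)(1-tx)}$, the denominator cancels precisely the factors $(1-qtz_j/z_i)$ coming from those pairs $(i,j)\notin\mcNI_{\bfbeta}$ in the numerators of the $\omega$'s, leaving
\[\frac{\prod_{i<j}\omega(z_j/z_i)}{\prod_{(i,j)\notin\mcNI_{\bfbeta}}(1-qtz_j/z_i)} = \prod_{1\leq i<j\leq\ell}\frac{1-z_j/z_i}{(1-qz_j/z_i)(1-tz_j/z_i)}\cdot\prod_{(i,j)\in\mcNI_{\bfbeta}}(1-qtz_j/z_i).\]
Finally, using $\Omega[-u]=1-u$ and $\Omega[u]=(1-u)^{-1}$ for a single monomial $u$, together with $\bM=q+t-1$, one has $\tfrac{1-x}{(1-qx)(1-tx)}=\Omega[\bM x]$ and $1-qtx=\Omega[-qtx]$; multiplying the resulting plethystic exponentials (which add in their arguments) with $\Omega[-\sum_i z_iX]$ yields the single plethystic exponential in \eqref{eq:explicit_formula}.

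The computation is essentially a bookkeeping exercise; the only point requiring care is the normal-ordering count, i.e.\ verifying that every unordered pair $i<j$ contributes exactly one factor $\omega(z_j/z_i)$ and that no spurious factors arise from reordering the commuting $\mcP$'s and $\mcT$'s among themselves. I would prove the $\ell$-fold identity by a clean induction on $\ell$ (peeling off $D(z_\ell)$ and using the two-operator relation \eqref{NormalOrdering}), which makes the count transparent; this bookkeeping, rather than any conceptual difficulty, is the main thing to get right.
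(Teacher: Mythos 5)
Your proposal is correct and follows essentially the same route as the paper: normal-order $D(z_1)\dotsm D(z_\ell)$ via the two-operator relation \eqref{NormalOrdering} to get $\prod_{i<j}\omega(z_j/z_i)\,\mcP_{-z_1-\dots-z_\ell}\mcT_{M/z_1+\dots+M/z_\ell}$, apply to $1$, divide by the factors $(1-qtz_j/z_i)$ for $(i,j)\notin\mcNI_{\bfbeta}$ using \eqref{eq:R_vertex_opPrim}, and repackage the resulting rational prefactor as a plethystic exponential with $\bM=q+t-1$. The bookkeeping you flag (one factor $\omega(z_j/z_i)$ per pair $i<j$) is exactly what the paper's displayed normal-ordering identity records.
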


\begin{proof}
	We use \eqref{NormalOrdering} to normal order $D(z_1) \dotsm D(z_{\ell})$ in \cref{eq:R_vertex_op},
	\begin{equation*}
		D(z_1) \dotsm D(z_{\ell}) = \biggl(\prod_{1\leq i<j\leq \ell} \omega(z_j/z_i)\biggr) \mcP_{-z_1-\dotsb-z_\ell}\mcT_{M/z_1+\dotsb+M/z_\ell}. 
	\end{equation*}
	Applying this equation on 1, we get
	\begin{equation*}
		D(z_1) \dotsm D(z_{\ell})\cdot 1 = \biggl(\prod_{1\leq i<j\leq \ell} \omega(z_j/z_i)\biggr) \Omega\left[-\sum_{1\leq i\leq \ell}z_i X\right]
	\end{equation*}
	since $\mcT_x \cdot 1 = 1$ and $\mcP_x \cdot 1 = \Omega[x]$.
Plugging it into \eqref{eq:R_vertex_opPrim}, we get
	\begin{equation*}
		\mcR_{\bfbeta}\cdot 1 = [z_1^{\beta_1} \dotsm z_{\ell}^{\beta_{\ell}}] \prod_{1\leq i<j\leq \ell} \frac{(1-z_j/z_i) }{(1-qz_j/z_i)(1-tz_j/z_i)} \prod_{(i,j)\in\mcNI_\beta} (1-qtz_j/z_i) \Omega\left[-\sum_{1\leq i\leq \ell}z_i X\right]
	\end{equation*}
	which is equivalent to the plethystic exponentials in the Theorem.
\end{proof}

We deduce from \cref{thm:explicit_formula2} the expansion for the family $\mcR_{\bfbeta}\cdot 1$ in the Schur basis, the elementary basis, and the monomial basis.
\begin{cor}\label{cor:explicit:formula}
	Fix $\bfbeta=(\beta^{(1)},\beta^{(2)},\dots,\beta^{(m)})=(\beta_1,\dots,\beta_\ell)$ of total size $n\geq 0$. For any partition $\lambda$ of size $n$ and for the alphabet $Z =
    z_1+\dotsm+z_\ell$, we have
	\begin{align*}
		(-1)^n [s_\lambda[X]] \mcR_{\bfbeta}\cdot 1
		=&\left[z^{\bfbeta}\right]
		s_{\lambda'}[Z]\prod_{1\leq i<j\leq \ell}
		\frac{1-z_j/z_i}{(1-qz_j/z_i)(1-tz_j/z_i)}\prod_{(i,j)\in\mcNI_{\bfbeta}}
		(1-qtz_j/z_i),
	\end{align*}
	and 
	\begin{align*}
		(-1
		)^n[m_\lambda[X]] \mcR_{\bfbeta}\cdot 1
		=&\left[z^{\bfbeta}\right]
		e_{\lambda}[Z]\prod_{1\leq i<j\leq \ell}
		\frac{1-z_j/z_i}{(1-qz_j/z_i)(1-tz_j/z_i)}\prod_{(i,j)\in\mcNI_{\bfbeta}}
		(1-qtz_j/z_i),  
	\end{align*}
	and
	\begin{align*}
	 (-1)^n[e_\lambda[X]] \mcR_{\bfbeta}\cdot 1
	=&\left[z^{\bfbeta}\right]
	m_\lambda[Z]\prod_{1\leq i<j\leq \ell}
	\frac{1-z_j/z_i}{(1-qz_j/z_i)(1-tz_j/z_i)}\prod_{(i,j)\in\mcNI_{\bfbeta}}
	(1-qtz_j/z_i).
	\end{align*}
\end{cor}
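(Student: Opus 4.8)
The plan is to deduce all three expansions directly from \Cref{thm:explicit_formula2} by splitting the plethystic exponential in \eqref{eq:explicit_formula} into a factor depending only on the ratios $z_j/z_i$ and a Cauchy-type factor $\Omega[-ZX]$ in the two independent alphabets $Z=z_1+\dots+z_\ell$ and $X$, and then pairing each symmetric-function basis in $X$ against its dual via a Cauchy identity. First I would recover the rational $z$-factor: recalling that $\omega(x)=\Omega[-Mx]=\frac{(1-x)(1-qtx)}{(1-qx)(1-tx)}$ and that $\bM=q+t-1$, one checks the two elementary plethystic identities $\Omega[\bM w]=\frac{1-w}{(1-qw)(1-tw)}$ and $\Omega[-qtw]=1-qtw$ (writing $\bM w=(qw+tw)-w$ and reading $\Omega$ off the positive and negative letters). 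Applying these to $w=z_j/z_i$ shows that the first two sums inside the bracket of \eqref{eq:explicit_formula} reproduce exactly the product $\prod_{1\leq i<j\leq\ell}\frac{1-z_j/z_i}{(1-qz_j/z_i)(1-tz_j/z_i)}\prod_{(i,j)\in\mcNI_{\bfbeta}}(1-qtz_j/z_i)$ appearing in the corollary.

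It then remains to expand the last factor $\Omega\!\left[-\sum_i z_i X\right]=\Omega[-ZX]$. Since the extraction of $[z^{\bfbeta}]$ acts on the alphabet $Z$ while the extraction of the $X$-coefficient acts on the disjoint alphabet $X$, the two operations commute, so I would first compute the coefficient of each basis element in $X$ and only afterwards multiply by the ratio-factor above and take $[z^{\bfbeta}]$. The three coefficients come from the three Cauchy identities $\Omega[ZX]=\sum_\lambda s_\lambda[Z]s_\lambda[X]=\sum_\lambda h_\lambda[Z]m_\lambda[X]=\sum_\lambda m_\lambda[Z]h_\lambda[X]$, combined with the sign rules $s_\lambda[-X]=(-1)^{|\lambda|}s_{\lambda'}[X]$ and $h_\lambda[-X]=(-1)^{|\lambda|}e_\lambda[X]$ (and the same for the alphabet $Z$). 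Substituting $X\mapsto -X$ or $Z\mapsto -Z$ as appropriate yields $[s_\lambda[X]]\Omega[-ZX]=(-1)^{|\lambda|}s_{\lambda'}[Z]$, then $[e_\lambda[X]]\Omega[-ZX]=(-1)^{|\lambda|}m_\lambda[Z]$, and $[m_\lambda[X]]\Omega[-ZX]=(-1)^{|\lambda|}e_\lambda[Z]$.

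Finally I would assemble the pieces: multiplying each of these $X$-coefficients by the ratio-product from the first step and extracting $[z^{\bfbeta}]$ gives exactly the right-hand sides of the corollary, while the prefactor $(-1)^{|\lambda|}$ becomes the $(-1)^n$ on the left once one notes that only $|\lambda|=n$ contributes. Indeed, the ratio-product is homogeneous of degree $0$ in the total $z$-grading and $z^{\bfbeta}$ has total degree $n=\sum_i\beta_i$, so the surviving terms of $\Omega[-ZX]$ are those of degree $n$ in $Z$, i.e. indexed by partitions of $n$. There is no real obstacle here beyond bookkeeping; the only point requiring care is the plethystic sign conventions, and relatedly making sure one uses the correct Cauchy pairing for each target basis (Schur with Schur, monomial with homogeneous, elementary with monomial) so that the transpose and the $e\leftrightarrow m$ swaps land on the correct alphabet.
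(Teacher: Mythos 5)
Your proposal is correct and follows essentially the same route as the paper: both deduce the three expansions from \cref{thm:explicit_formula2} by factoring the plethystic exponential into the ratio product times $\Omega[-ZX]$ and then invoking the dual Cauchy identities $\Omega[-ZX]=\sum_\lambda(-1)^{|\lambda|}s_{\lambda'}[Z]s_\lambda[X]=\sum_\lambda(-1)^{|\lambda|}e_\lambda[Z]m_\lambda[X]=\sum_\lambda(-1)^{|\lambda|}m_\lambda[Z]e_\lambda[X]$. Your write-up merely makes explicit the sign rules and the degree bookkeeping that the paper leaves implicit.
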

\begin{proof}
	Each formula is a consequence of \cref{eq:explicit_formula} and one of the dual Cauchy formulas:
	\begin{align*}
		\Omega\left[-ZX\right]=\sum_{\lambda\in\YY}(-1)^{|\lambda|}s_{\lambda'}[Z]s_\lambda[X]=\sum_{\lambda\in\YY}(-1)^{|\lambda|}e_{\lambda}[Z]m_\lambda[X]=\sum_{\lambda\in\YY}(-1)^{|\lambda|}m_\lambda[Z]e_{\lambda}[X].
	\end{align*}
	where $\lambda'$ is the transpose of $\lambda$.
\end{proof}
Note that \cref{thm:explicit_formula} is then obtained from \cref{cor:explicit:formula} by taking $m=1$.

\section{Functional equations for the \texorpdfstring{$(q,t)$}{(q,t)}-tau function}
\label{sec:fun_eq}

\subsection{Preliminaries}
\label{sub:MacdonaldPrel}

Recall (see~\cite{GarsiaHaimanTesler1999}) that a ring of symmetric functions $\Lambda$ on the alphabet $X$ possesses a distinguished basis of the so-called modified Macdonald polynomials $\tH_\lambda[X]$. We will need two well-known properties of modified Macdonald polynomials.

\begin{thm}
	\label{thm:Macdonald}
	We have the following:
	\begin{enumerate}
		\item[(i)] \textbf{Pieri Rule \cite{Macdonald1995}:} There exist coefficients $d^{\mu,\lambda}\in \QQ(q,t)$ such that
		\begin{equation*}
			-e_1[X]\tH_\mu=\sum_{\mu\nearrow\lambda}d^{\mu,\lambda}\tH_\lambda
		\end{equation*}
		where the sum is taken over partitions $\lambda$ obtained from $\mu$ by adding exactly one cell.
		\item[(ii)] \textbf{Macdonald operator $D_0$ \cite[Theorem~1.2]{GarsiaHaimanTesler1999}:} The modified Macdonald polynomials are eigenfunctions of the operator $D_0 = [z^0] D(z)$ defined in~\eqref{eq:Dk}
		\begin{equation*}
			D_0\cdot \tH_{\lambda}=\biggl(1-M\sum_{(i,j)\in\lambda} q^{j-1} t^{i-1}\biggr)\tH_{\lambda},\quad \text{for any $\lambda\in \YY$,}
		\end{equation*}
        where the sum is taken over all cells $(i,j)$ of the Young diagram $\lambda$.
	\end{enumerate}
\end{thm}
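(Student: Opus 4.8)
The plan is to obtain both statements by transporting the corresponding classical facts about the integral-form Macdonald polynomials $\J_\mu$ through the plethystic substitution that defines the modified polynomials. Recall that $\tH_\mu$ is obtained from $\J_\mu$ by a substitution of the form $X\mapsto X/(1-t^{-1})$ together with $t\mapsto t^{-1}$ and a scalar normalization; write $\phi$ for the resulting algebra automorphism of $\Lambda$ (over the extended coefficient field), so that $\tH_\mu=c_\mu\,\phi(\J_\mu)$ for explicit constants $c_\mu$. Since $\phi$ is a ring homomorphism, every multiplicative or triangular statement about $\{\J_\mu\}$ pushes forward to $\{\tH_\mu\}$; the only thing to track is how the operators $-e_1[X]$ and $D_0$ are conjugated by $\phi$.

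For part (i), the key observation is that $e_1=p_1$ is plethystically quasi-linear: since $\phi(p_1)=\frac{1}{1-t^{-1}}\,p_1$, conjugating multiplication by $e_1$ by $\phi$ returns multiplication by a nonzero scalar multiple of $e_1$. First I would invoke the classical Pieri rule for integral-form Macdonald polynomials, $e_1\,\J_\mu=\sum_{\mu\nearrow\lambda}\psi_{\mu\lambda}\,\J_\lambda$, where the sum ranges exactly over partitions obtained from $\mu$ by adding a single cell. Applying $\phi$ to both sides and absorbing the scalar $\tfrac{1}{1-t^{-1}}$, the normalizations $c_\mu/c_\lambda$, and the overall sign into new coefficients $d^{\mu,\lambda}\in\QQ(q,t)$ yields $-e_1[X]\tH_\mu=\sum_{\mu\nearrow\lambda}d^{\mu,\lambda}\tH_\lambda$. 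The crucial structural feature --- that only $\lambda$ covering $\mu$ in Young's lattice (i.e. $\lambda\gtrdot\mu$) occur --- is inherited verbatim, because $\phi$ neither changes the indexing set nor mixes degrees.

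For part (ii), I would use the explicit factorization $D(z)=\mcP_{-z}\mcT_{M/z}$, so that $D_0=[z^0]\mcP_{-z}\mcT_{M/z}$ is a degree-preserving operator on $\Lambda$. Two routes are available. The first is again plethystic transport: $D_0$ is conjugate under $\phi$ to a normalization of Macdonald's $q$-difference operator, whose eigenfunctions are the $\J_\mu$ with eigenvalue a symmetric sum of the form $\sum_i q^{\mu_i}t^{c_i}$ for suitable exponents $c_i$; carrying this eigenvalue through $\phi$ converts it into $1-M\sum_{(i,j)\in\mu}q^{j-1}t^{i-1}$. The second, more self-contained route is to verify the eigen-relation directly: one checks that $D_0$ is triangular with respect to the dominance order in the modified basis, $D_0\,\tH_\mu=\varepsilon_\mu\,\tH_\mu+\sum_{\nu>\mu}(\cdots)\tH_\nu$, computes the diagonal coefficient $\varepsilon_\mu=1-M\sum_{(i,j)\in\mu}q^{j-1}t^{i-1}$ from the defining triangularity and normalization of $\tH_\mu$, and then uses that the $\{\varepsilon_\mu\}$ are pairwise distinct in $\QQ(q,t)$ to force the off-diagonal terms to vanish (equivalently, if $\tH_\mu$ is defined as the $D_0$-eigenbasis, then (ii) reduces to computing $\varepsilon_\mu$).

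The main obstacle is part (ii): correctly tracking the plethystic substitution through the Macdonald operator and confirming that the eigenvalue becomes exactly $1-M\,B_\mu$ with $B_\mu=\sum_{(i,j)\in\mu}q^{j-1}t^{i-1}$; the consistency checks that the empty partition gives $1$ and $\mu=(1)$ gives $1-M$ are useful guardrails. Part (i) is comparatively routine once the quasi-linearity of $e_1$ under $\phi$ is noted. In both cases these are the classical results of Macdonald and of Garsia--Haiman--Tesler, so the cleanest write-up may simply cite \cite{Macdonald1995} and \cite[Theorem~1.2]{GarsiaHaimanTesler1999}, with the transport argument above recorded as the conceptual reason they hold.
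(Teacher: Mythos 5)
The paper does not actually prove this theorem: it is stated as a recollection of two classical facts, with part (i) cited to \cite{Macdonald1995} and part (ii) to \cite[Theorem~1.2]{GarsiaHaimanTesler1999}, and no argument is given. Your plethystic-transport sketch is the standard proof of both facts and is essentially correct. For (i), the key point you identify is right: under the automorphism $f[X]\mapsto f\bigl[X/(1-t^{-1})\bigr]$ (composed with $t\mapsto t^{-1}$ and the scalar normalization), multiplication by $e_1=p_1$ conjugates to a nonzero scalar multiple of itself, so the integral-form Pieri rule passes to the modified basis with only the coefficients changing --- and, crucially for this paper, the support over $\lambda\gtrdot\mu$ is preserved, which is the only feature of the $d^{\mu,\lambda}$ that is ever used later. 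For (ii), your two routes are both viable, but as written each has a gap that the cited reference fills: the first requires actually carrying the eigenvalue of Macdonald's operator through the substitution (a computation you defer), and the second requires establishing the triangularity of $D_0$ and the pairwise distinctness of the quantities $1-M\sum_{(i,j)\in\mu}q^{j-1}t^{i-1}$ (which does hold, since the multiset of monomials $q^{j-1}t^{i-1}$ determines $\mu$); this is precisely the content of \cite[Theorem~1.2]{GarsiaHaimanTesler1999}. Your guardrail checks are correct: from $D_0=\sum_{n\geq 0}(-1)^ne_n[X]h_n^\perp[MX]$ one gets $D_0\cdot 1=1$ and $D_0\cdot p_1=(1-M)p_1$, matching the claimed eigenvalues for $\emptyset$ and $(1)$. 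Given that the paper treats the theorem as a black-box citation, concluding your write-up by citing the two references, as you suggest, matches the paper's own level of detail.
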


Recall that the Macdonald Cauchy kernel can be written as follows
\begin{equation*}
    \Omega\left[-\frac{XY}{M}\right]=\exp\left(\sum_{i \geq 1} \frac{p_i[X]p_i[Y]}{i p_i[-M]}\right) = \sum_{\mu\in\YY}\frac{p_{\mu}[X]p_{\mu}[Y]}{z_\mu p_\mu[-M]},
\end{equation*}
which motivates the definition of the \emph{star scalar product} $\langle .,.\rangle_*$ as the deformation of the Hall scalar product given by 
$$\langle p_\mu,p_\nu\rangle_*=z_\mu\delta_{\mu,\nu}p_\mu[-M]=(-1)^{\ell(\mu)}z_\mu\delta_{\mu,\nu}\prod_{1\leq i\leq \ell(\mu)}\left(1-q^{\mu_i}\right)\left(1-t^{\mu_i}\right), \quad \text{for any $\mu,\nu\in\YY$.} $$
Modified Macdonald polynomials are orthogonal with respect to the star scalar product:
\begin{equation}\label{eq:orthogonality_Macdonald}
  \langle \tH_\lambda,\tH_\rho\rangle_*=\delta_{\lambda,\rho}\normH{\lambda},  
\end{equation}
we refer to \cite[Chapter VI]{Macdonald1995} for a combinatorial formula of the squared norm $\normH{\lambda}$. The following Cauchy formula then holds:
\begin{equation}\label{eq:Cauchy}
    \Omega\left[-\frac{XY}{M}\right]=\sum_{\lambda\in\YY}\frac{\tH_\lambda[X]\tH_\lambda[Y]}{\normH{\lambda}}.
\end{equation}

Recall that we denote $\Lambda^\KK_A$ the algebra of symmetric functions in the alphabet $A = X,Y$ with coefficients in $\KK :=\QQ(q,t)\llbracket U,V \rrbracket$, where $U = u_1,u_2,\dotsc$ and $V = v_1,v_2,\dotsc$ are two infinite sequences of variables. Then the $G$-weighted $(q,t)$-tau function is the series in $\Lambda^\KK_X\otimes\Lambda^\KK_Y\llbracket z\rrbracket$ defined by:
\begin{equation*}
	\tau_G(z,X,Y) :
	= \sum_{\lambda\in\mathbb
		Y}z^{|\lambda|}\frac{\tH_\lambda[X]\tH_\lambda[Y]}{\normH{\lambda}}\prod_{(i,j)\in\lambda} G(q^{j-1} t^{i-1}),
\end{equation*}
where 
\[ G_1(\hbar) := 1+\sum_{n=1}^\infty u_n \hbar^n \in \QQ[U,V]\llbracket \hbar \rrbracket,\quad  G_2(\hbar) :=  1+\sum_{n=1}^\infty v_n \hbar^n \in \QQ[U,V]\llbracket \hbar \rrbracket,\]
and $G(q^{j} t^{i}) := \frac{G_1(q^{j} t^{i})}{G_2(q^{j} t^{i})} \in \KK$.

There is an equivalent way of defining the $G$-weighted $(q,t)$-tau function, which is closer in spirit to the setting used in~\cite{GarsiaHaimanTesler1999}. Let $G(\hbar) \in \KK \llbracket \hbar \rrbracket$ be a series such that the substitution $G(q^j t^i)$ is a well-defined element of $\KK$ for all $j,i \geq 0$.
  
For an alphabet $A = X,Y$ we define the linear operator $\Pi^A_G \colon \Lambda_A^\KK \rightarrow \Lambda_A^\KK$ by its action on the basis of Macdonald polynomials
\begin{equation*}\label{eq:Pi_Operator}
  \Pi^A_G\cdot \tH_\lambda[A] := \prod_{(i,j)\in\lambda} G(q^{j-1} t^{i-1}) \tH_\lambda[A],\qquad \forall \lambda\in\YY.
\end{equation*}
Then, it is immediate from the Cauchy identity~\cref{eq:Cauchy} that
\[ \tau_G(z,X,Y) = \Pi^X_G\Omega\left[-\frac{zXY}{M}\right] = \Pi^Y_G\Omega\left[-\frac{zXY}{M}\right].\]

\subsection{Functional equations}
Throughout the paper, the series $F$ will be $F=G_1$ or $F=G_2$, with expansion $F(\hbar)=\sum_{i \geq 0}a_i \hbar^i$ ($a_i=u_i$ in the first case and $a_i=v_i$ in the second one and with the convention $a_0 = 1$). We will use
\begin{equation}
	\label{eq:A_F}
	\A_F \coloneqq \sum_{i \geq 0}a_i \ad^i_{\frac{D_0}{-M}}(-e_1[X]), \quad \text{with $\ad_{A}(B) := [A,B] = AB-BA$}
\end{equation}
for any operators $A$ and $B$. Then define the family of operators
\begin{equation}
	\label{def:AFell}
	\A^{(1)}_F := \ad_{\frac{D_0}{-M}} \left(\A_F\right),\qquad \A^{(\ell)}_F =
	\ad^{\ell-1}_{\A_F/M}(\A^{(1)}_F),\quad \text{for $\ell \geq 2$}.
\end{equation}

\begin{rmk}
Note that we have already used the notation $\A^{(\ell)}_F$ in \cref{thm:dif_eq'} for operators that were defined in terms of path operators. This is not a conflict in notation, as we will prove in \cref{sec:path_op} that the operators $\A^{(\ell)}_F$ defined by \eqref{def:AFell} can be expressed in terms of the path operators as stated in~\cref{thm:dif_eq'}. For now, however, we will only use the definition~\eqref{def:AFell}. 
\end{rmk}

\begin{rmk}\label{rmk:diff_op}
Recall that $\Lambda^\KK$ is isomorphic to the polynomial ring $\KK[z_1,z_2,\dots]$, where $z_1,z_2,\dots$ are generators of $\Lambda$; for instance $e_1,e_2,\dots$, $h_1,h_2,\dots$, or $p_1,p_2,\dots$. Then $\A_{F}^{(\ell)}$ are differential operators in the sense that
\[ \A_{F}^{(\ell)} \colon \KK[z_1,z_2,\dots] \rightarrow \KK[z_1,z_2,\dots],\quad \A_{F}^{(\ell)} = \sum_{\alpha,\beta} a_{\alpha}^\beta z^\alpha \partial_{z^\beta},\quad a_{\alpha}^\beta \in \KK,\]
where we sum over multi-indices $\alpha,\beta$ with finitely many non-zero terms, and $z^\alpha := z_1^{\alpha_1} z_2^{\alpha_2} \dotsc$, $\partial_{z^\beta} := \partial_{z_1}^{\beta_1} \partial_{z_2}^{\beta_2} \dotsc$. This can be checked using the expansion of $D_0 = \sum_{n\geq 0} (-1)^n e_n[X] h_n^\perp[MX]$ and expanding $h_n^\perp$ in terms of $p_k^\perp=\frac{k\partial}{\partial p_k}$.
\end{rmk}

We start with the following functional equation (or PDE by \cref{rmk:diff_op}) satisfied by the $(q,t)$-tau function.
\begin{thm}\label{thm:dif_eq}
	For any $\ell\geq 1$ we have
	\begin{equation}\label{eq:diff_eq2}
		z^\ell\A^{(\ell)}_{G_1}(X)\cdot\tau_G(z,X,Y)=\left(\A^{(\ell)}_{G_2}(Y)\right)^*\cdot\tau_G(z,X,Y).  
	\end{equation}
where $\left(\A_{G_2}^{(\ell)}\right)^*$ is the adjoint of $\A_{G_2}^{(\ell)}$ with respect to the star scalar product.
\end{thm}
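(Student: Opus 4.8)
The plan is to prove \cref{eq:diff_eq2} by induction on $\ell$, using the operator description of $\tau_G$ as $\Pi^X_G \Omega[-zXY/M]$ and the fact that the two elementary building blocks $e_1[X]$ and $D_0$ act on $\tau_G$ in a controllable way. First I would establish the base case $\ell = 1$. Here $\A_F^{(1)} = \ad_{D_0/(-M)}(\A_F)$, and $\A_F$ itself is built from nested commutators of $\tfrac{D_0}{-M}$ against $-e_1[X]$, with the coefficients $a_i$ of the series $F$. The key observation is that $\tau_G$ intertwines the action of $D_0$ on the $X$-side and the $Y$-side: since $\tH_\lambda[X]\tH_\lambda[Y]$ carries the same eigenvalue $1 - M\sum_{(i,j)\in\lambda}q^{j-1}t^{i-1}$ under $D_0(X)$ as under $D_0(Y)$ (by \cref{thm:Macdonald}(ii)), we have $D_0(X)\cdot\tau_G = D_0(Y)^*\cdot\tau_G$ after accounting for the star-adjoint (the self-adjointness of $D_0$ with respect to $\langle\cdot,\cdot\rangle_*$ is what converts one into the other). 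Meanwhile, multiplication by $z$ combined with $e_1[X]$ should be related to the $Y$-side via the Pieri rule in \cref{thm:Macdonald}(i) and the weight-shifting structure encoded in the product $\prod_{(i,j)\in\lambda}G(q^{j-1}t^{i-1})$: adding a cell to $\lambda$ multiplies the weight and the eigenvalue, and the factor $z^{|\lambda|}$ is what tracks the cell count, supplying the prefactor $z^\ell$.

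The inductive step is where the recursive definition \eqref{def:AFell}, $\A^{(\ell)}_F = \ad^{\ell-1}_{\A_F/M}(\A^{(1)}_F)$, does the work. Assuming \cref{eq:diff_eq2} holds for $\ell$, I would apply the operator $\ad_{\A_{G_1}/M}$ on the $X$-side to $z^\ell \A^{(\ell)}_{G_1}(X)\cdot\tau_G$ and show it matches applying $\ad_{\A_{G_2}/M}$ on the $Y$-side (passed through the star-adjoint) to the right-hand side. For this to close, I need the crucial intertwining fact that $\A_{G_1}(X)$ acting on $\tau_G$ equals, up to the power of $z$, the star-adjoint of $\A_{G_2}(Y)$ acting on $\tau_G$ — that is, the $\ell=0$ (or the raw $\A_F$) version of the statement. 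Here the asymmetry between $G_1$ (numerator) and $G_2$ (denominator) in $G = G_1/G_2$ is exactly what makes the $X$-side use $G_1$ and the $Y$-side use $G_2$: the operator $\A_{G_1}$ inserts a factor governed by the $u_i$ and $\A_{G_2}$ by the $v_i$, and the ratio structure of $G$ guarantees they balance against each other on $\tau_G$. One must also verify that the star-adjoint interacts correctly with commutators, i.e. $(\ad_A B)^* = -\ad_{A^*}(B^*)$ or an analogous sign-tracked identity, and that $D_0$ and $\A_{G_2}$ have the right self-adjointness or adjointness properties under $\langle\cdot,\cdot\rangle_*$ so that the $\ell-1$ nested brackets transfer cleanly from one side to the other.

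The main obstacle I anticipate is controlling how $e_1[X]$ (which is not $\langle\cdot,\cdot\rangle_*$-self-adjoint, unlike $D_0$) transfers to the $Y$-side. The operator $\A_F$ is a sum of iterated commutators $\ad^i_{D_0/(-M)}(-e_1)$, so I will need to understand the $\langle\cdot,\cdot\rangle_*$-adjoint of $e_1[X]$ acting on the Macdonald basis, presumably identifying $(e_1)^* = e_1^\perp$ up to normalization and then relating the Pieri coefficients $d^{\mu,\lambda}$ appearing in $-e_1\cdot\tH_\mu$ to those governing $e_1^\perp$. The interplay of the Pieri rule with the eigenvalue shift from $D_0$ when a cell is added must produce precisely the weight factor $G(q^{j-1}t^{i-1})$ at the added cell $(i,j)$, and matching the numerator contribution ($G_1$, carried by $z^\ell$ on the left) against the denominator contribution ($G_2$, via the adjoint on the right) is the delicate bookkeeping step. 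I expect that once the $\ell=1$ intertwining is pinned down correctly — essentially a single identity expressing how adding a cell acts on $\tau_G$ — the induction propagates it to all $\ell$ almost formally, since both sides are generated by the same commutator recursion and the star-adjoint respects that recursion.
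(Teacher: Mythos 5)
Your proposal is correct and rests on the same two pillars as the paper's argument: the computation $\A_F\cdot\tH_\mu=\sum_{\mu\nearrow\lambda}d^{\mu,\lambda}F(\lambda/\mu)\tH_\lambda$ (the Pieri rule combined with the $D_0$ eigenvalues, so that the eigenvalue difference at the added cell generates the factor $F(q^{j-1}t^{i-1})$), and induction on $\ell$ through the commutator recursion \eqref{def:AFell}. The difference is one of packaging. The paper first proves a general transfer lemma (\cref{lem:diff_eq}): for homogeneous operators of degree $\ell$, the functional equation $z^\ell\mcO_2(X)\tau_G=\mcO_1^*(Y)\tau_G$ is \emph{equivalent} to the conjugation identity $\Pi_G\mcO_1\Pi_G^{-1}=\mcO_2$; the induction is then run entirely on the conjugation identity (\cref{lem:DeltaG_A+}), where it is immediate because conjugation by $\Pi_G$ is an algebra automorphism and $\Pi_G$ commutes with $D_0$. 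You instead run the induction directly on the functional equation; this does work, because operators in $X$ commute with operators in $Y$ and the star-adjoint reverses products, so $\bigl([\A_{G_2},\A^{(\ell)}_{G_2}]\bigr)^*=[(\A^{(\ell)}_{G_2})^*,(\A_{G_2})^*]$ and the bracket transfers with the correct sign --- but it forces you to carry out exactly the adjoint bookkeeping that the conjugation formulation sidesteps. One obstacle you anticipate is in fact a non-issue: you never need an explicit description of $(e_1)^*$ or of the Pieri coefficients $d^{\mu,\lambda}$. The adjoint only ever acts on $\tau_G$, where it is evaluated through matrix coefficients, $\langle\mcO^*\tau_G,\tH_\mu\rangle_*=\langle\tau_G,\mcO\cdot\tH_\mu\rangle_*$, and the only fact needed about the Pieri rule is that $-e_1\tH_\mu$ is supported on partitions obtained from $\mu$ by adding one cell; the identity $G_1(\lambda/\mu)/G_2(\lambda/\mu)=G(\lambda)/G(\mu)$ then closes the level-one (``raw $\A_F$'') intertwining exactly as you describe.
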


In order to prove \cref{thm:dif_eq}, we need to find how the operators $\A^{(\ell)}_F$ act on modified Macdonald polynomials, starting with $\ad^i_{D_0/M}(-e_1[X])$, which is possible by~\cref{thm:Macdonald}. Remarkably, no explicit expression of the coefficients $d^{\mu,\lambda}$ appearing in the Pieri rule (\cref{thm:Macdonald}(i)) is required in our work, only the fact that they vanish if $\lambda$ is not obtained from $\mu$ by the addition of one cell.

For a skew diagram $\lambda/\mu$, we use the notation $F(\lambda/\mu):=\prod_{(i,j)\in\lambda/\mu}F(q^{j-1}t^{i-1}).$

\begin{lem}\label{lem:DeltaG_A+}
	For any $\ell\geq 1$, we have	\[\Pi_G\cdot \A^{(\ell)}_{G_2}\cdot \Pi_G^{-1}=\A^{(\ell)}_{G_1}.\] 
\end{lem}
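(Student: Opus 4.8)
The plan is to reduce the identity for all $\ell$ to a single ``base'' conjugation identity together with the fact that $\Pi_G$ commutes with $D_0$, and then to prove that base identity by a matrix-element computation in the modified Macdonald basis. First I note that $\Pi_G$ is invertible: its eigenvalue $\prod_{(i,j)\in\lambda}G(q^{j-1}t^{i-1})$ on $\tH_\lambda$ is a unit in $\KK$, being congruent to $1$ modulo the maximal ideal $(U,V)$. Hence conjugation $A \mapsto \Pi_G A \Pi_G^{-1}$ is an algebra automorphism of $\End(\Lambda^\KK)$; in particular it preserves iterated adjoint actions,
\[ \Pi_G\,\ad^k_A(B)\,\Pi_G^{-1} = \ad^k_{\Pi_G A\Pi_G^{-1}}\bigl(\Pi_G B\Pi_G^{-1}\bigr). \]
Since $\Pi_G$ and $D_0$ are both diagonal in $\{\tH_\lambda\}$ they commute, so $\Pi_G(D_0/(-M))\Pi_G^{-1} = D_0/(-M)$. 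Granting the base identity $\Pi_G\A_{G_2}\Pi_G^{-1}=\A_{G_1}$, I would conjugate the defining relations \eqref{def:AFell}: from $\A^{(1)}_{G_2}=\ad_{D_0/(-M)}(\A_{G_2})$ the displayed automorphism property yields $\Pi_G\A^{(1)}_{G_2}\Pi_G^{-1}=\ad_{D_0/(-M)}(\A_{G_1})=\A^{(1)}_{G_1}$, and then from $\A^{(\ell)}_{G_2}=\ad^{\ell-1}_{\A_{G_2}/M}(\A^{(1)}_{G_2})$ it yields $\Pi_G\A^{(\ell)}_{G_2}\Pi_G^{-1}=\ad^{\ell-1}_{\A_{G_1}/M}(\A^{(1)}_{G_1})=\A^{(\ell)}_{G_1}$, which is the claim.

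It remains to prove the base identity, and this is where the real content lies. I would work entirely in the basis $\{\tH_\mu\}$. The key computation is that $\ad_{D_0/(-M)}$ acts diagonally on ``add-a-box'' matrix elements: by \cref{thm:Macdonald}(ii) the eigenvalue of $D_0/(-M)$ on $\tH_\lambda$ is $\sum_{(i,j)\in\lambda}q^{j-1}t^{i-1}-1/M$, so for $\mu\nearrow\lambda$ the eigenvalue difference equals exactly the content $w_{\lambda/\mu}:=q^{j_0-1}t^{i_0-1}$ of the unique added cell $(i_0,j_0)=\lambda/\mu$. Combined with the Pieri rule \cref{thm:Macdonald}(i), which shows that $-e_1[X]$ is supported precisely on such pairs with coefficient $d^{\mu,\lambda}$, an induction on $i$ gives that $\ad^i_{D_0/(-M)}(-e_1[X])$ sends $\tH_\mu$ to $\sum_{\mu\nearrow\lambda} w_{\lambda/\mu}^{\,i}\, d^{\mu,\lambda}\,\tH_\lambda$. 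Summing against the coefficients $a_i$ of $F$ (with $a_0=1$) collapses the power series into the value $F(w_{\lambda/\mu})=F(\lambda/\mu)$, giving the crucial structural formula
\[ \A_F\cdot\tH_\mu = \sum_{\mu\nearrow\lambda} F(\lambda/\mu)\,d^{\mu,\lambda}\,\tH_\lambda, \]
i.e.\ $\A_F$ is the raising operator $-e_1[X]$ with each single-box Pieri coefficient reweighted by $F$ evaluated at the content of the added box.

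The base identity is then immediate from this description together with $G=G_1/G_2$. Conjugating by $\Pi_G$ multiplies the $(\mu,\lambda)$ matrix element by the ratio of $\Pi_G$-eigenvalues, which for $\mu\nearrow\lambda$ equals $G(\lambda/\mu)$. Hence the $(\mu,\lambda)$ matrix element of $\Pi_G\A_{G_2}\Pi_G^{-1}$ is $G(\lambda/\mu)\,G_2(\lambda/\mu)\,d^{\mu,\lambda}$; since $\lambda/\mu$ is a single cell, $G(\lambda/\mu)=G_1(\lambda/\mu)/G_2(\lambda/\mu)$, so the product telescopes to $G_1(\lambda/\mu)\,d^{\mu,\lambda}$, which is exactly the matrix element of $\A_{G_1}$. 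As both operators are determined by their action on the basis $\{\tH_\mu\}$, this gives $\Pi_G\A_{G_2}\Pi_G^{-1}=\A_{G_1}$ and completes the argument.

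The main obstacle is the matrix-element computation establishing the displayed formula for $\A_F\cdot\tH_\mu$ — specifically, verifying that the iterated adjoint action by $D_0/(-M)$ produces powers of the added-cell content and that these reassemble into the value $F(\lambda/\mu)$. Once this ``$\A_F$ reweights the Pieri rule by $F$'' principle is in place, everything else is formal manipulation of commutators under conjugation, relying only on the facts that $\Pi_G$ is invertible and commutes with $D_0$.
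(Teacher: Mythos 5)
Your proposal is correct and follows essentially the same route as the paper: both first establish the matrix-element formula $\A_F\cdot\tH_\mu=\sum_{\mu\nearrow\lambda}d^{\mu,\lambda}F(\lambda/\mu)\tH_\lambda$ from the Pieri rule and the $D_0$ eigenvalues, deduce the base identity $\Pi_G\A_{G_2}\Pi_G^{-1}=\A_{G_1}$ by telescoping $G=G_1/G_2$ across the added cell, and then propagate to all $\ell$ via the commutator recursion and the fact that $\Pi_G$ commutes with $D_0$. Your write-up merely makes explicit the intermediate computation that the paper compresses into one line.
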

\begin{proof}
	From \cref{thm:Macdonald} and the defining formula~\eqref{eq:A_F} for $\A_F$ we find
	\begin{equation*} \label{eq:A_PonMacdonald}
		\A_F \tH_\mu = \sum_{\mu\nearrow\lambda} d^{\mu,\lambda} F(\lambda/\mu) \tH_\lambda.
	\end{equation*}
With the definition of $\Pi_G$, this gives 
\begin{equation}\label{eq:A_PI}
    \Pi_G \A_{G_2}\Pi^{-1}_G=\A_{G_1}.
\end{equation}
Commuting \cref{eq:A_PI} with $\frac{-D_0}{M}$ gives the lemma for $\ell=1$. Then for $\ell\geq 2$, one has $\A^{(\ell)}_F = \frac{1}{M}[\A_F, \A_F^{(\ell-1)}]$, so the lemma is obtained from \cref{eq:A_PI} by induction.
\end{proof}

\noindent We obtain \Cref{thm:dif_eq} by combining \cref{lem:DeltaG_A+} and the following lemma.
\begin{lem}\label{lem:diff_eq}
	Let $\mcO_1$ and $\mcO_2$ be two operators on $\Lambda^\KK$ which have the same homogeneous degree $\ell\geq0$. Then
	\begin{equation*}\label{eq:diff_DeltaG}
		z^\ell\mcO_2(X)\cdot \tau_G(z,X,Y)=\mcO_1^{*}(Y)\cdot \tau_G (z,X,Y)\ \Longleftrightarrow\ \Pi_G\cdot \mcO_1\cdot \Pi_G^{-1}=\mcO_2.  
	\end{equation*}
\end{lem}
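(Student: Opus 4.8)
The plan is to prove \cref{lem:diff_eq}, the equivalence
\[
z^\ell\mcO_2(X)\cdot \tau_G(z,X,Y)=\mcO_1^{*}(Y)\cdot \tau_G (z,X,Y)
\ \Longleftrightarrow\
\Pi_G\cdot \mcO_1\cdot \Pi_G^{-1}=\mcO_2,
\]
for operators $\mcO_1,\mcO_2$ homogeneous of degree $\ell$. The key is to express the $(q,t)$-tau function through the operator $\Pi^X_G$, using the identity recorded just above the statement, namely $\tau_G(z,X,Y)=\Pi^X_G\,\Omega[-zXY/M]$. I would work entirely on the $X$-side, using that $\mcO_2(X)$ acts in the $X$-variables and the adjoint $\mcO_1^{*}(Y)$ acts in the $Y$-variables, and transfer the $Y$-side adjoint to the $X$-side via the Cauchy kernel.

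\textbf{Step 1: transfer the $Y$-adjoint to $X$.} The first key step is the observation that applying the adjoint (with respect to the star scalar product in $Y$) of a $Y$-operator to the Macdonald Cauchy kernel is the same as applying the operator itself in the $X$-variables. Concretely, using the Cauchy expansion $\Omega[-XY/M]=\sum_\lambda \tH_\lambda[X]\tH_\lambda[Y]/\normH{\lambda}$ from \cref{eq:Cauchy} together with the orthogonality \cref{eq:orthogonality_Macdonald}, for any operator $\mcO$ one has $\mcO^{*}(Y)\cdot\Omega[-XY/M]=\mcO(X)\cdot\Omega[-XY/M]$. This is the standard ``reproducing kernel'' property of the star scalar product, and it is the technical heart of the argument; I expect the bookkeeping of how $\Pi_G$ and the rescaling $zXY$ interact with this identity to be the main obstacle.

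\textbf{Step 2: rewrite both sides.} Using $\tau_G=\Pi^X_G\,\Omega[-zXY/M]$ and that $\Pi^X_G$ commutes with the $Y$-operator $\mcO_1^{*}(Y)$, the right-hand side becomes $\Pi^X_G\,\mcO_1^{*}(Y)\cdot\Omega[-zXY/M]$, which by Step 1 equals $\Pi^X_G\,\mcO_1(X)\cdot\Omega[-zXY/M]$ (the scaling $z$ factors through the homogeneity). For the left-hand side, since $\mcO_2$ is homogeneous of degree $\ell$, the prefactor $z^\ell$ can be absorbed into the $z$-grading of $\Omega[-zXY/M]$; writing $\tau_G=\Pi^Y_G\,\Omega[-zXY/M]$ and commuting $\mcO_2(X)$ past $\Pi^Y_G$, the left-hand side becomes $\Pi^Y_G\cdot z^\ell\mcO_2(X)\cdot\Omega[-zXY/M]$, which I would then convert back to the $X$-presentation $\Pi^X_G\,\mcO_2(X)\cdot\Omega[-zXY/M]$ after accounting for the degree-$\ell$ scaling that the $z^\ell$ precisely supplies.

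\textbf{Step 3: extract the operator identity.} After Steps 1--2, the PDE \cref{eq:diff_DeltaG} reduces to
\[
\Pi^X_G\,\mcO_2(X)\cdot\Omega\!\left[-\tfrac{zXY}{M}\right]
=\Pi^X_G\,\mcO_1(X)\,\Pi_G^{-1}\Pi_G\cdot\Omega\!\left[-\tfrac{zXY}{M}\right],
\]
so that the two sides agree if and only if $\mcO_2=\Pi_G\,\mcO_1\,\Pi_G^{-1}$ as operators on $\Lambda^\KK_X$. The ``$\Leftarrow$'' direction is then immediate by reversing the computation. For ``$\Rightarrow$'', I would invoke the fact that $\Omega[-zXY/M]$ is a ``universal'' generating kernel: its coefficients $\tH_\lambda[Y]/\normH{\lambda}$ are linearly independent over $\Lambda^\KK_Y$, so equality of the two expressions after applying $X$-operators forces equality of the operators applied to each $\tH_\lambda[X]$, hence the operator identity. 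I would carry out the orthogonality extraction in Step 1 carefully, as the interplay between the $z$-weighting and the degree homogeneity is where sign or scaling errors are most likely to creep in.
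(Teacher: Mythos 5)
Your overall strategy (rewrite $\tau_G$ via the Cauchy kernel and use the reproducing property $\mcO^{*}(Y)\cdot\Omega[-zXY/M]=z^{\ell}\mcO(X)\cdot\Omega[-zXY/M]$) is sound, and Step 1 is correct; the paper's proof is essentially the coefficient-by-coefficient version of the same idea, expanding everything in the basis $\tH_\lambda$ and comparing. But Step 2 contains a genuine error on the left-hand side: the identity $\Pi^Y_G\, z^{\ell}\mcO_2(X)\cdot\Omega[-zXY/M]=\Pi^X_G\,\mcO_2(X)\cdot\Omega[-zXY/M]$ is false. Writing $\mcO_2\tH_\lambda=\sum_{\xi\vdash|\lambda|+\ell}a^{(2)}_{\lambda,\xi}\tH_\xi$, the first expression carries the eigenvalue $G(\lambda)$ and the second carries $G(\xi)$; the $z^{\ell}$ reconciles the powers of $z$ but not the ratio $G(\xi)/G(\lambda)$ over the cells of $\xi/\lambda$ --- and that discrepancy is precisely the content of the lemma, so it cannot be waved away as a ``degree-$\ell$ scaling''. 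The inconsistency surfaces in Step 3: your displayed reduction, read literally, has the invertible operator $\Pi^X_G$ on the far left of both sides, so cancelling it and using that the kernel separates operators would force $\mcO_2=\mcO_1$, not $\mcO_2=\Pi_G\mcO_1\Pi_G^{-1}$.

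The repair is small. Keep $\mcO_2(X)$ \emph{outside} $\Pi^X_G$: the left-hand side of the functional equation is $z^{\ell}\mcO_2(X)\,\Pi^X_G\,\Omega[-zXY/M]$, while Step 1 turns the right-hand side into $z^{\ell}\Pi^X_G\,\mcO_1(X)\,\Omega[-zXY/M]=z^{\ell}\bigl(\Pi_G\mcO_1\Pi_G^{-1}\bigr)(X)\cdot\tau_G$. Since the coefficient of $\tH_\lambda[Y]$ in $\tau_G$ is a nonzero multiple of $\tH_\lambda[X]$ for every $\lambda$, two $X$-operators agree on $\tau_G$ if and only if they agree on every $\tH_\lambda[X]$, which yields exactly $\mcO_2=\Pi_G\mcO_1\Pi_G^{-1}$ in both directions. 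With that correction your argument is a valid, slightly more operator-theoretic packaging of the paper's proof.
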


\begin{proof}
		Since Macdonald polynomials are a basis of $\Lambda^\KK$, we have the equivalence
		\begin{align*}
			z^\ell\mcO_2(X)&\cdot \tau_G(z,X,Y)=\mcO_1^{*}(Y)\cdot \tau_G (z,X,Y)\Longleftrightarrow \\
			 &\forall \lambda, \quad \langle z^\ell \mcO_2(X)\cdot \tau_G(z,X,Y), \tH_\lambda(Y)\rangle_*=\langle \tau_G(z,X,Y),\mcO_1(Y)\cdot \tH_\lambda(Y)\rangle_*.
		\end{align*}
        		Here the scalar product is taken with respect to the variables $Y$.
		For $1\leq i \leq 2$, define the coefficients $a^{(i)}_{\lambda,\xi}$ by
	$$\mcO_i\cdot \tH_\lambda=\sum_{\xi\vdash |\lambda|+\ell}a^{(i)}_{\lambda,\xi} \tH_\xi.$$
Using the orthogonality of Macdonald polynomials \cref{eq:orthogonality_Macdonald}, we get
		\begin{align*}
			 z^\ell&\mcO_2(X)\cdot \tau_G(z,X,Y)=\mcO_1^{*}(Y)\cdot \tau_G (z,X,Y) 
			 \Longleftrightarrow \\
			 &\forall \lambda,\quad G(\lambda)\sum_{\xi\vdash
			   |\lambda|+\ell}a^{(2)}_{\lambda,\xi}\, \tH_\xi =  \sum_{\xi\vdash
			   |\lambda|+\ell}G(\xi) a^{(1)}_{\lambda,\xi}\, \tH_\xi \Longleftrightarrow\\
			 &\forall \lambda,\quad \sum_{\xi\vdash
			   |\lambda|+\ell}a^{(2)}_{\lambda,\xi}\, \tH_\xi =  \sum_{\xi\vdash
			   |\lambda|+\ell}G(\xi) a^{(1)}_{\lambda,\xi} G^{-1}(\lambda) \, \tH_\xi \Longleftrightarrow\\
		  &\forall \lambda,\quad \mcO_2\,
			\tH_\lambda = \Pi_G\cdot \mcO_1\cdot \Pi_G^{-1}\,\tH_\lambda, 
		\end{align*}
	which finishes the proof.
	\end{proof}

\subsection{Uniqueness}

\cref{thm:dif_eq} implies that if $\tau(z) \in \Lambda_X^\KK\otimes\Lambda_Y^\KK\llbracket z \rrbracket$ is equal to $\tau_G(z,X,Y)$, then $\tau(z)$ is a solution of a system of PDEs. We prove that the converse is also true, i.e. $\tau(z)$ is uniquely determined by the functional equations.
\begin{thm}\label{thm:uniqueness}
	Let $\tau(z) \in \Lambda_X^\KK\otimes\Lambda_Y^\KK\llbracket z \rrbracket$ be a function which satisfies the functional equations of \cref{thm:dif_eq}, and $\tau(z) = 1 + O(z)$. Then $\tau(z) = \tau_G(z,X,Y)$.
\end{thm}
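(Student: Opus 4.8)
The plan is to prove uniqueness by extracting, from the system of functional equations of \cref{thm:dif_eq}, enough information to determine every coefficient $[z^n]\tau(z)$ recursively from the lower-order coefficients, starting from the given initial condition $[z^0]\tau(z)=1$. Write $\tau(z) = \sum_{n\geq 0} \tau_n z^n$ with $\tau_n \in \Lambda_X^\KK\otimes\Lambda_Y^\KK$ homogeneous (in an appropriate grading) and $\tau_0 = 1$. Since any two solutions $\tau,\tilde\tau$ satisfying the hypotheses have the same $z^0$-coefficient, it suffices to show that the equations force $\tau_n$ to be determined by $\tau_0,\dotsc,\tau_{n-1}$; then $\tau = \tau_G$ follows by induction on $n$, because $\tau_G$ is itself a solution by \cref{thm:dif_eq}.

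First I would understand the degree structure of the operators. By \cref{def:AFell}, $\A^{(\ell)}_F$ is built from $-e_1[X]$ (which raises Macdonald degree by one) and $D_0$ (degree-preserving) via iterated commutators, so $\A^{(\ell)}_F$ is a homogeneous operator that raises degree by exactly $\ell$; this is the content of the degree-$\ell$ hypothesis in \cref{lem:diff_eq}. The key case is $\ell=1$. I would examine the equation
\begin{equation*}
  z\,\A^{(1)}_{G_1}(X)\cdot\tau(z) = \bigl(\A^{(1)}_{G_2}(Y)\bigr)^*\cdot\tau(z)
\end{equation*}
and extract the coefficient of $z^{n+1}$. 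The left-hand side contributes $\A^{(1)}_{G_1}(X)\cdot\tau_n$, involving only the already-known coefficient $\tau_n$, while the right-hand side contributes $\bigl(\A^{(1)}_{G_2}(Y)\bigr)^*\cdot\tau_{n+1}$, involving the unknown $\tau_{n+1}$. Thus $\tau_{n+1}$ will be pinned down provided the operator $\bigl(\A^{(1)}_{G_2}(Y)\bigr)^*$, acting on the relevant homogeneous component, is injective (or at least injective modulo the part already fixed). The main obstacle is precisely this injectivity: I expect $\A^{(1)}_F$ to act on Macdonald polynomials via $\A^{(1)}_F\tH_\mu = \sum_{\mu\nearrow\lambda} c_{\mu,\lambda}\,\tH_\lambda$ with coefficients $c_{\mu,\lambda}$ that combine the Pieri coefficients $d^{\mu,\lambda}$ with an eigenvalue difference coming from the $D_0/(-M)$ commutator, and I would need these to be nonzero, so that expanding $\tau(z)$ in the Macdonald basis $\tH_\lambda[X]\tH_\mu[Y]$ lets me solve for the new coefficients cell by cell.

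Concretely, I would expand $\tau_{n+1} = \sum_{\lambda,\mu} c_{\lambda,\mu}\,\tH_\lambda[X]\tH_\mu[Y]$ and use \cref{lem:diff_eq} together with the Pieri-type action to reduce the $z^{n+1}$-coefficient of the $\ell=1$ equation to a triangular linear system for the $c_{\lambda,\mu}$, where triangularity is with respect to adding a single cell and the diagonal entries are the nonvanishing $D_0$-eigenvalue gaps $\bigl(1-M\sum_{(i,j)\in\lambda}q^{j-1}t^{i-1}\bigr)-\bigl(1-M\sum_{(i,j)\in\mu}q^{j-1}t^{i-1}\bigr) = -M\,q^{j_0-1}t^{i_0-1}$ attached to the added cell, which is manifestly nonzero. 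Invoking just the $\ell=1$ equation should already suffice to determine $\tau_{n+1}$ from $\tau_n$, so the higher-$\ell$ equations are consistency conditions automatically satisfied by $\tau_G$ rather than additional constraints needed for uniqueness. The hardest part will be handling the star-adjoint carefully and confirming the nonvanishing of the diagonal coefficients across the full two-alphabet tensor setting; once the triangular system with nonzero diagonal is in hand, uniqueness follows by the induction described above.
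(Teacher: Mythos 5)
There is a genuine gap, and it sits exactly at the step you flag as the ``main obstacle'': the operator $\bigl(\A^{(1)}_{G_2}(Y)\bigr)^*$ is \emph{not} injective on the relevant homogeneous component, and no choice of nonvanishing coefficients can make it so. Since $\A^{(1)}_{G_2}$ raises $Y$-degree by one, its star-adjoint lowers it by one, so on the degree-$(n+1)$ component it is a linear map from a space of dimension $p(n+1)$ (partitions of $n+1$) to one of dimension $p(n)<p(n+1)$; its kernel has dimension at least $p(n+1)-p(n)>0$ for $n\geq 1$. Concretely, in the Macdonald basis the $z^{n+1}$-coefficient of the $\ell=1$ equation reads $\sum_{\mu\,:\,\nu\nearrow\mu}(\ast)\,c_{\lambda,\mu}=(\text{known})$ for each $\nu\vdash n$, i.e.\ an \emph{underdetermined} rectangular system for the unknowns $c_{\lambda,\mu}$, $\mu\vdash n+1$ --- not a triangular square one. (The up operator $\A^{(1)}_F$ may well be injective, but the equation asks you to invert the \emph{down} operator acting on the unknown $\tau_{n+1}$, which is impossible.) Consequently your assertion that the $\ell=1$ equation alone determines $\tau_{n+1}$, and that the higher-$\ell$ equations are mere consistency conditions, is false: the whole family $\ell\geq 1$ is needed.

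The paper's proof is structured around precisely the ingredient your argument omits. It iterates the equations over products $\A^{(\lambda_1)}_{G_1}(X)\dotsm\A^{(\lambda_{\ell(\lambda)})}_{G_1}(X)$ indexed by partitions $\lambda$, extracts the coefficient of $z^{|\lambda|}p_\emptyset[Y]$, and obtains $\langle \aaa_{G_2,\lambda}(Y),[z^{|\lambda|}]\tau\rangle_*=\aaa_{G_1,\lambda}(X)$ for every $\lambda$. This pins down $\tau$ only because the family $(\aaa_{F,\lambda})_{\lambda\in\YY}$ is a basis of $\Lambda^\KK$ (\cref{prop:basis_a_lambda}), so that the linear functionals $\langle\aaa_{G_2,\lambda}(Y),\cdot\rangle_*$ separate points and one may expand in the dual basis $\bbb_{G_2,\lambda}$. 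That basis property is the real content of the uniqueness statement; its proof is nontrivial (it uses the path-operator expression for $\A^{(\ell)}_F$ from \cref{thm:dif_eq''} and a triangularity argument at $q=t=1$), and nothing in your proposal substitutes for it. If you want to keep an induction on the power of $z$, you would need, at each step, the surjectivity of $\bigoplus_{\ell\geq1}\A^{(\ell)}_{G_2}\colon\bigoplus_\ell\Lambda^{n+1-\ell}_Y\to\Lambda^{n+1}_Y$ so that the joint kernel of the adjoints is zero --- which is again equivalent to the spanning half of \cref{prop:basis_a_lambda}.
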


It turns out that we can construct a basis of $\Lambda^\KK$ by repeatedly acting on $1$ with the operators $\A_F^{(\ell)}$. This construction leads to \cref{thm:uniqueness}.
We define for $\lambda\in \YY$
\begin{equation*}\label{eq:def_aaa}
  \aaa_{F,\lambda}:=\A_F^{(\lambda_1)}\cdot \A_F^{(\lambda_2)}\dotsm \A_F^{(\lambda_{\ell(\lambda)})}\cdot 1
\end{equation*}
which is a symmetric function of degree $|\lambda|$.
\begin{rmk}
    Since the operators $(\A_F^{(\ell)})_{\ell\geq 1}$ do not commute in general, taking a different order on the parts of $\lambda$ in the previous definition would lead to a different definition of the functions $\aaa_{F,\lambda}$.
\end{rmk}
\begin{prop}\label{prop:basis_a_lambda}
    For $F\in\left\{G_1,G_2\right\}$, the family $(\aaa_{F,\lambda})_{\lambda\in\YY}$ is a basis of $\Lambda^\KK$.
\end{prop}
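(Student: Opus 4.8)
The plan is to work one degree at a time. Since $\aaa_{F,\lambda}$ is homogeneous of degree $|\lambda|$ and the number $p(n)$ of partitions of $n$ equals the rank of the free $\KK$-module of degree-$n$ symmetric functions, it suffices to show for each $n$ that the transition matrix $T^{(n)}_F=\bigl([e_\mu]\,\aaa_{F,\lambda}\bigr)_{\lambda,\mu\vdash n}$ to the elementary basis $(e_\mu)_{\mu\vdash n}$ is invertible over $\KK$. Now $\KK=\QQ(q,t)\llbracket U,V\rrbracket$ is local with maximal ideal $\mathfrak m=(U,V)$ and residue field $\QQ(q,t)$, so $\det T^{(n)}_F$ is a unit iff its image under the reduction $U=V=0$ is nonzero. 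As $G_1$ and $G_2$ both have constant term $1$, this reduction sends $a_0\mapsto 1$, $a_i\mapsto 0$ ($i\geq1$), hence $\A_F\mapsto -e_1$ and, by \eqref{def:AFell}, $\A^{(\ell)}_F\mapsto \A^{(\ell)}_1$ and $\aaa_{F,\lambda}\mapsto \aaa_{1,\lambda}$. Thus everything reduces to proving that $(\aaa_{1,\lambda})_{\lambda\vdash n}$ is a basis of the degree-$n$ part of $\Lambda$ over $\QQ(q,t)$.

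\noindent\textbf{The key identity $\A^{(\ell)}_1=D_\ell$.}
I claim that for $F=1$ the operators collapse to the Macdonald operators: $\A^{(\ell)}_1=D_\ell=[z^\ell]D(z)$. Writing $A:=-e_1$ and $E:=D_0/(-M)$, the definition \eqref{def:AFell} gives
\[
\A^{(\ell)}_1=\tfrac{1}{M^{\ell-1}}\ad_A^{\ell-1}\bigl([E,A]\bigr)=\tfrac{-1}{M^{\ell-1}}\ad_A^{\ell}(E),
\]
using $[E,A]=-\ad_A(E)$. I would evaluate $\ad_A^\ell(E)$ through the generating function $\sum_{\ell\geq0}\tfrac{s^\ell}{\ell!}\ad_A^\ell(E)=e^{sA}Ee^{-sA}$. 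Since $A$ is multiplication by $-p_1[X]$ and $D_0=[z^0]\mcP_{-z}\mcT_{M/z}$, conjugation touches only the translation part, via $\mcT_{M/z}\bigl(e^{sp_1[X]}f\bigr)=e^{sM/z}e^{sp_1[X]}\mcT_{M/z}f$ (from $p_1[X+M/z]=p_1[X]+M/z$). This yields $e^{sA}Ee^{-sA}=-\tfrac1M[z^0]e^{sM/z}D(z)=-\tfrac1M\sum_{k\geq0}\tfrac{(sM)^k}{k!}D_k$, and comparing coefficients of $s^\ell$ gives $\ad_A^\ell(E)=-M^{\ell-1}D_\ell$, hence $\A^{(\ell)}_1=D_\ell$. (As a sanity check, $\aaa_{1,(n)}=D_n\cdot1=(-1)^n e_n$, which is exactly the $e$-triangular leading term below.)

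\noindent\textbf{Dominance triangularity and conclusion.}
Granting the identity, $\aaa_{1,\lambda}=D_{\lambda_1}\dotsm D_{\lambda_r}\cdot1=\mcR_{\bfbeta}\cdot1$ for $\bfbeta=((\lambda_1),\dots,(\lambda_r))$, a concatenation of singletons, so $\mcNI_{\bfbeta}$ is the set of \emph{all} pairs $i<j$ and the two products in \cref{cor:explicit:formula} combine into $\prod_{i<j}\omega$. The elementary-basis formula there gives, with $Z=z_1+\dots+z_r$,
\[
(-1)^n[e_\mu]\,\aaa_{1,\lambda}=[z_1^{\lambda_1}\dotsm z_r^{\lambda_r}]\,m_\mu[Z]\prod_{1\leq i<j\leq r}\omega(z_j/z_i).
\]
The product is a power series in the ratios $z_j/z_i$ ($j>i$) with constant term $1$, so each monomial it contributes is $z^\delta$ with $\delta=\sum_{i<j}a_{ij}(e_j-e_i)$, $a_{ij}\geq0$; a nonzero coefficient thus needs $\lambda-\delta$ to be a rearrangement of $\mu$. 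The partial sums $\sum_{k\leq m}\delta_k=-\sum_{i\leq m<j}a_{ij}\leq0$ force $\mu\geq\lambda$ in dominance order, with $\mu=\lambda$ possible only for $\delta=0$, contributing $1$. Hence $\aaa_{1,\lambda}=(-1)^n\bigl(e_\lambda+\sum_{\mu>\lambda}c_{\lambda\mu}e_\mu\bigr)$ is unitriangular (up to the sign $(-1)^n$) against $(e_\mu)$, so $(\aaa_{1,\lambda})_{\lambda\vdash n}$ is a basis; the local-ring argument of the first paragraph then promotes this to a $\KK$-basis for $F\in\{G_1,G_2\}$.

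\noindent\textbf{Main obstacle.}
The crux is the identity $\A^{(\ell)}_1=D_\ell$; once the iterated commutators of $-e_1$ and $D_0$ are recognized as the Macdonald operators, the dominance triangularity is an immediate consequence of \cref{cor:explicit:formula}, and the remaining steps are bookkeeping (homogeneity and the reduction $U=V=0$). The one point demanding care is the generating-function computation of $e^{sA}Ee^{-sA}$, namely verifying that conjugating $D_0$ by the multiplication operator $e^{sp_1}$ produces only the scalar shift $e^{sM/z}$ inside the coefficient extraction.
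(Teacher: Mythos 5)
Your proof is correct, and it takes a genuinely different route from the paper's. The paper specializes the deformation parameters, setting $q=t=1$ so that $M=0$: there the negative-step operators $\mcO(m)$, $m<0$, kill everything, path operators become multiplicative, $\mcQ_\alpha$ collapses to $(-1)^\ell e_\ell$ plus dominance-\emph{lower} terms, and invertibility of the transition determinant at $q=t=1$ lifts to generic $q,t$; crucially, that argument runs through \cref{thm:dif_eq''} (the expansion $\A_F^{(\ell)}=\sum_\alpha a_\alpha\mcQ_\alpha$), the main result of \cref{sec:path_op}. You instead specialize the weight variables, reducing modulo the maximal ideal $U=V=0$ of the local ring $\KK$, and identify the reduced operators directly: your conjugation computation $e^{-sp_1}D(z)e^{sp_1}=e^{sM/z}D(z)$, giving $\ad_{-e_1}^{\ell}\bigl(D_0/(-M)\bigr)=-M^{\ell-1}D_\ell$ and hence $\A_1^{(\ell)}=D_\ell$, is correct and recovers by hand the identity $\mcQ_{0^\ell}=\A_K^{(\ell)}$ (with $K=1$) that the paper only extracts from \cref{thm:dif_eq''}. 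Your triangularity then comes from \cref{cor:explicit:formula} with $\bfbeta$ a concatenation of singletons, where $\mcNI_{\bfbeta}$ is the set of all pairs and the partial-sum analysis of $\prod_{i<j}\omega(z_j/z_i)$ forces $\mu\geq\lambda$ in dominance — the opposite triangularity from the paper's, which is harmless. What your route buys is logical independence: \cref{prop:basis_a_lambda}, and with it \cref{thm:uniqueness}, no longer rest on the commutator machinery of \cref{sec:path_op}, only on the normal-ordering formula of \cref{sec:PathOperators} and a short vertex-operator identity. What the paper's route buys is economy, since \cref{thm:dif_eq''} is needed for \cref{thm:dif_eq'} anyway. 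Two minor points of hygiene: the maximal ideal of $\KK$ is the set of series with vanishing constant term (writing it as $(U,V)$ is slightly loose with infinitely many variables, but you only use unit-detection via the constant term, which is what both proofs rely on); and one should note, as you implicitly do, that for fixed $\lambda$ only finitely many monomials of $\prod_{i<j}\omega(z_j/z_i)$ can contribute to the coefficient of $z^\lambda$, so the extraction is well defined.
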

Before proving this proposition, let us explain how it implies~\cref{thm:uniqueness}. 

For any partition $\lambda$, the differential equations \cref{eq:diff_eq2} allow us to write
\begin{multline*}
  z^{|\lambda|}\A_{G_1}^{(\lambda_1)}(X)\cdot \A_{G_1}^{(\lambda_2)}(X)\cdots \A_{G_1}^{(\lambda_{\ell(\lambda)})}(X)\cdot \tau(z)\\
  =\left(\A_{G_2}^{(\lambda_{\ell(\lambda)})}(Y)\right)^*\cdots  \left(\A_{G_2}^{(\lambda_2)}(Y)\right)^* \cdot\left(\A_{G_2}^{(\lambda_1)}(Y)\right)^*\cdot\tau(z).  
\end{multline*}
 We now extract the coefficient of $z^{|\lambda|}p_\emptyset[Y] =
 z^{|\lambda|}$. On the left-hand side we get
 \begin{align*}
    \A_{G_1}^{(\lambda_1)}(X)\cdots
   \A_{G_1}^{(\lambda_{\ell(\lambda)})}(X)\cdot\left[z^0 \right]\tau(z)
    &=\A_{G_1}^{(\lambda_1)}(X)\cdots \A_{G_1}^{(\lambda_{\ell(\lambda)})}(X)\cdot 1\\
    &=\aaa_{G_1,\lambda}(X).
 \end{align*}
 On the right-hand side, we obtain 
 \begin{align*}
   \left[z^{|\lambda|}p_\emptyset[Y] \right]&\left(\A_{G_2}^{(\lambda_{\ell(\lambda)})}(Y)\right)^*\cdots  \left(\A_{G_2}^{(\lambda_2)}(Y)\right)^* \cdot\left(\A_{G_2}^{(\lambda_1)}(Y)\right)^*\cdot \tau(z)\\
   &=\left\langle p_\emptyset[Y], \left[z^{|\lambda|}\right]\left(\A_{G_2}^{(\lambda_{\ell(\lambda)})}(Y)\right)^*\cdots  \left(\A_{G_2}^{(\lambda_2)}(Y)\right)^* \cdot\left(\A_{G_2}^{(\lambda_1)}(Y)\right)^*\cdot \tau(z)\right\rangle_*\\
   &=\left\langle \A_{G_2}^{(\lambda_1)}(Y)\cdot
   \A_{G_2}^{(\lambda_2)}(Y)\cdots \A_{G_2}^{(\lambda_{\ell(\lambda)})}(Y)\cdot 1, \left[z^{|\lambda|}\right]\tau(z)\right\rangle_*\\
   &=\left\langle \aaa_{G_2,\lambda}(Y), \left[z^{|\lambda|}\right]\tau(z)\right\rangle_*,
 \end{align*}
 where the scalar product is taken with respect to the alphabet $Y$. Let $\bbb_{F,\lambda}$ denote the dual basis of $\aaa_{F,\lambda}$, i.e.
 $$\left\langle\aaa_{F,\lambda}, \bbb_{F,\mu}\right\rangle_*=\delta_{\lambda,\mu}, \quad \text{for any $\lambda$ and $\mu$.}$$
Therefore, the differential equations imply that for any $\lambda$
\begin{equation*}
    \aaa_{G_1,\lambda}(X)=\left[z^{|\lambda|}\bbb_{G_2,\lambda}(Y)\right]\tau(z).
  \end{equation*}
  In other terms,
\begin{equation*}\label{eq:tau_G_formula}
	\tau(z)=\sum_{\lambda\in\YY}z^{|\lambda|}\aaa_{G_1,\lambda}(X)\bbb_{G_2,\lambda}(Y).  
\end{equation*}
This proves that $\tau(z)$ is completely determined  by the equations \cref{eq:diff_eq'}, and therefore $\tau(z)=\tau_G(z,X,Y)$.
\begin{rmk}
    Note that when $G_1=G_2$, the function $\tau_G(z,X,Y)$ corresponds to the Cauchy kernel (see \cref{eq:Cauchy}), and the previous equation follows from the fact that $\aaa_{G_1}$ and $\bbb_{G_1}$ are dual with respect to the star scalar product.
\end{rmk}

In order to conclude the proof of \cref{thm:uniqueness}, we need to prove \cref{prop:basis_a_lambda}. The proof of the latter is a direct consequence of the combinatorial expression for $\A_F^{\ell}$ is terms of the path operators, which we prove in \cref{sec:path_op}. This expression, together with \cref{thm:dif_eq} and \cref{thm:uniqueness} gives us our main \cref{thm:dif_eq'}.

\subsection{Rewriting in terms of path operators}
\label{sub:path_op}

Recall that for any $\beta \in \ZZ^\ell$ we denote $\bR_\beta$ the set of alternating paths that start and end at the same points as $\gamma_\beta$ (i.e. have the same length and degree) and that stay weakly above $\gamma_\beta$. 

From now on, we will be working with sequences $\beta \in \ZZ_{>0}\times \ZZ_{\geq 0}^{\ell-1}$ and we will use an alternative description of path operators as \emph{paths decorated with particles}: we think of a path $\gamma\in \bR_\beta$ as an alternating path starting at $(0,0)$ ending at $(2\ell,\sum_{1\leq i\leq \ell}\beta_i)$ and such that:
\begin{itemize}
    \item  the $i$-th peak is decorated with $\beta_i$ particles, 
    \item for each valley $V$ of $\gamma$, the $y$-coordinate is at least the number of particles sitting on peaks on the left of $V$. The $\beta$-height of $V$ (as defined in \cref{eq:def_height}) is given by the difference between these two integers.
\end{itemize}
Notice that the condition $\beta \in \ZZ_{>0}\times \ZZ_{\geq 0}^{\ell-1}$ implies that there is always  at least one particle on the first peak.

We are now ready to give our second parametrization of path operators. If $\beta$ is a sequence of size $n$, we define $\alpha := \psi(\beta) \in \ZZ_{\geq 0}^n$ as the sequence consisting of the distances between consecutive particles of a path $\gamma\in\bR_\beta$, where the distance is interpreted as the number of peaks between the particles. By convention, $\alpha_n$ is the number of peaks after the last particle. We refer to~\cref{fig:RtoQ} for an example.

Equivalently, $\alpha=\psi(\beta)$ is the sequence obtained from $\beta$ as follows: we start from the empty sequence $\alpha=\emptyset$, and we read the sequence $\beta$ from left to right; each time we read an integer $i$ we add a sequence of $i$ zeroes, and we increase the right-most integer of $\alpha$ by 1, except at the end of the procedure -- in which case we only add a sequence of $i$ zeroes. We refer to \cref{fig:RtoQ} for an example.

\begin{figure}[h]
	\centering
	\includegraphics[width = \linewidth]{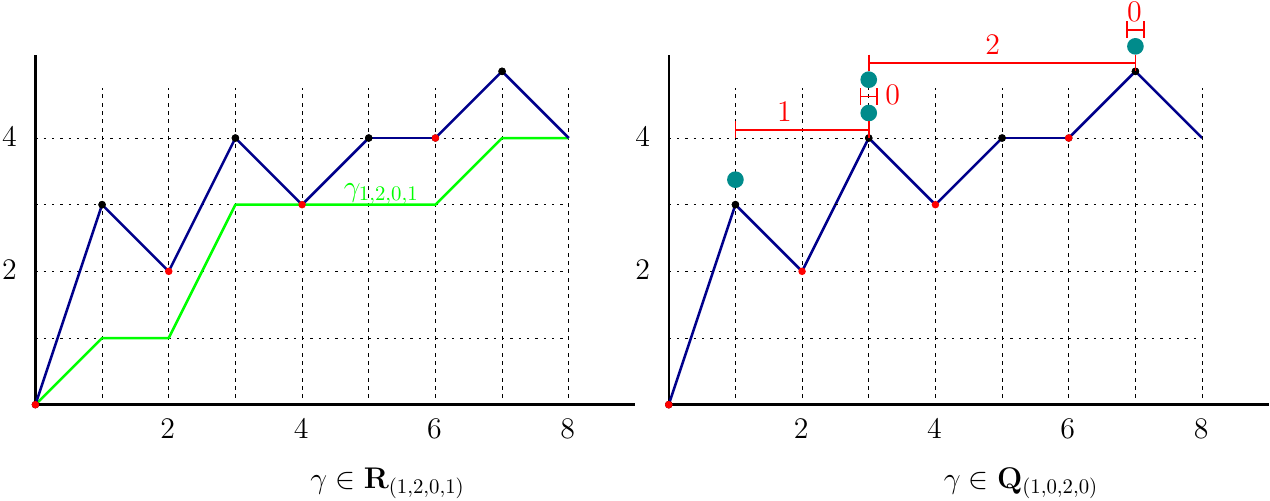}
	\caption{A path $\gamma$ belonging to $\bR_{(1,2,0,1)}$ with the associated path $\gamma_\beta$ on the left. On the right we visualize vector $\beta = (1,2,0,1)$ by placing particles over the peaks, and we reparametrize the set $\bR_{(1,2,0,1)}$ as $\bQ_{(1,0,2,0)}$ by interpreting $(1,0,2,0) = \psi(1,2,0,1)$ as the distances between the particles.}
	\label{fig:RtoQ}
\end{figure}

This defines a bijection $\psi$ between sequences in $\ZZ_{>0}\times \ZZ_{\geq 0}^{\ell-1}$ with size $n$ and sequences in $\ZZ_{\geq 0}^{n}$ with size $\ell-1$. Therefore, for any $\alpha \in \ZZ_{\geq 0}^{n}$ we can reparametrize the set $\bR_\beta$ as $\bQ_\alpha$, where $\beta := \psi^{-1}(\alpha)$, and similarly
\begin{equation}\label{eq:Q^alpha-Q_alpha}
	\mcQ_{\alpha}:=\mcR_{\psi^{-1}(\alpha)}=\sum_{\bfgamma\in\bR_{\psi^{-1}(\alpha)}}\mcO_{\psi^{-1}(\alpha)}(\gamma).  
  \end{equation}

\begin{exe}\label{exe:R_k}
For any $r\geq 0$, we have $\mcQ_{(r)}=\mcR_{(1,0^r)}$ and $\mcQ_{(0^r)}=\mcR_{(r)}$.    
\end{exe}

Our goal is to show that for $F(\hbar) = \sum_{i\geq 0} a_i \hbar^i \in \{G_1, G_2\}$, one has the following expression for the operator $\A_F^{(\ell)}$ (as defined on \cref{def:AFell}):
\begin{thm}\label{thm:dif_eq''}
	For any $\ell\geq 1$, we have
	\begin{equation}\label{eq:diff_eq'}
		\A_F^{(\ell)} = \sum_{\alpha} a_\alpha \mcQ_\alpha,
	\end{equation}
	where the sum is taken over $\alpha \in \ZZ_{\geq 0}^{\ell}$, and $a_\alpha := \prod_{i=1}^\ell a_{\alpha_i}$.
\end{thm}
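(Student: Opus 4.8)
The plan is to prove the identity as operators by matching the action of both sides on the modified Macdonald basis $(\tH_\lambda)_{\lambda\in\YY}$, reducing everything to two commutation relations between the path operators and the elementary operators $e_1=p_1$ and $D_0$. Write $b_\lambda:=\sum_{(i,j)\in\lambda}q^{j-1}t^{i-1}$, so that \cref{thm:Macdonald}(ii) reads $D_0=\mathrm{Id}-M\,b$; consequently $\ad_{D_0/(-M)}$ acts on any degree-raising operator by multiplying its $\tH_\mu\to\tH_\lambda$ matrix entry by $b_\lambda-b_\mu$, i.e. by the total content of the added cells. Together with the Pieri rule \cref{thm:Macdonald}(i) this lets me compute the left-hand side in closed form: using $\A^{(1)}_F=\ad_{D_0/(-M)}(\A_F)$ and $\A^{(\ell)}_F=\frac1M[\A_F,\A^{(\ell-1)}_F]$, an induction on $\ell$ gives
\[
\A^{(\ell)}_F\,\tH_\mu=\sum_{\mu=\mu^{(0)}\nearrow\dots\nearrow\mu^{(\ell)}=\lambda}\Bigl(\prod_{k=1}^{\ell}d^{\mu^{(k-1)},\mu^{(k)}}F(c_k)\Bigr)\,W_\ell(c_1,\dots,c_\ell)\,\tH_\lambda,
\]
where $c_k=q^{j-1}t^{i-1}$ is the content of the cell $\mu^{(k)}/\mu^{(k-1)}$ and $W_\ell$ is forced by the recursion $W_\ell(c_1,\dots,c_\ell)=\frac1M\bigl(W_{\ell-1}(c_1,\dots,c_{\ell-1})-W_{\ell-1}(c_2,\dots,c_\ell)\bigr)$ with $W_1(c_1)=c_1$; explicitly $W_\ell(c_1,\dots,c_\ell)=M^{-(\ell-1)}\sum_{k=1}^{\ell}(-1)^{k-1}\binom{\ell-1}{k-1}c_k$, the normalized $(\ell-1)$-st finite difference.

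Since $F(c_k)=\sum_{r_k\geq0}a_{r_k}c_k^{r_k}$, summing against $\prod_k F(c_k)$ shows that \cref{thm:dif_eq''} is equivalent to the Pieri-type rule
\[
\mcQ_{(r_1,\dots,r_\ell)}\,\tH_\mu=\sum_{\mu=\mu^{(0)}\nearrow\dots\nearrow\mu^{(\ell)}=\lambda}\Bigl(\prod_{k=1}^{\ell}d^{\mu^{(k-1)},\mu^{(k)}}c_k^{r_k}\Bigr)\,W_\ell(c_1,\dots,c_\ell)\,\tH_\lambda,
\]
for the path operator $\mcQ_{(r_1,\dots,r_\ell)}=\mcR_{\psi^{-1}(r_1,\dots,r_\ell)}$, and this is what I would prove by induction on $\ell$. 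The clean ingredient is the commutation with $e_1$: from $\mcT_{M/z}\,p_1=(p_1+M/z)\,\mcT_{M/z}$ and the fact that $\mcP_{-z}$ commutes with $p_1$, one gets $[D(z),p_1]=\frac Mz D(z)$, and inserting this into the formula of \cref{thm:R_vertex_op} yields the operator identity $[\mcR_\beta,e_1]=M\sum_{k}\mcR_{\beta+\mathbf e_k}$, where $k$ runs over the peaks of $\gamma_\beta$. In $\psi$-coordinates this is a particle-insertion rule that increases the length of $\alpha$ by one, and it provides the clean seed $D_1=\frac1M[D_0,e_1]$, whose action is $D_1\tH_\mu=\sum_{\mu\nearrow\lambda}d^{\mu,\lambda}c(\lambda/\mu)\tH_\lambda$. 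Identifying the remaining single-row operators through $\mcQ_{(r)}=\ad^{r}_{D_0/(-M)}(D_1)$, and more generally running the induction on $\ell$, requires the second relation.

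That second ingredient, and the main obstacle, is the $D_0$-relation $\ad_{D_0/(-M)}(\mcQ_\alpha)=\sum_{k}\mcQ_{\alpha+\mathbf e_k}$ (increment each part of $\alpha$), equivalently an explicit evaluation of $[D_0,\mcR_\beta]$; this is precisely what propagates the content weight $W_\ell$ and closes the induction for the Pieri rule. On the Macdonald side the relation is immediate, since $\ad_{D_0/(-M)}$ multiplies a matrix entry by $b_\lambda-b_\mu=\sum_k c_k$. The difficulty is to prove it directly at the level of path operators: unlike the commutation with $p_1$, the exchange relation \cref{CommutationD} for $D(z)D(w)$ produces the full rational factor $\omega(z/w)/\omega(w/z)$ rather than a simple shift, so extracting $[w^0]$ in $D_0=[w^0]D(w)$ is delicate. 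I expect the crux to be a combinatorial identity — governed by the bijection $\psi$ and the valley weights $(qt)^{\height_\beta(V)}$ — showing that the particle-insertion sum of the first relation and the increment sum of the second are mutually compatible with the finite-difference recursion defining $W_\ell$; concretely, a polynomial identity in the cell-contents $c_0,\dots,c_\ell$. Once this relation is established, the Pieri rule follows by induction and, summed against $\prod_kF(c_k)$, reproduces the closed form for $\A^{(\ell)}_F$, completing the proof.
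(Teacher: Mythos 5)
Your reduction of the left-hand side is correct: on the modified Macdonald basis, \cref{thm:Macdonald} does give $\A_F^{(\ell)}\tH_\mu=\sum\bigl(\prod_k d^{\mu^{(k-1)},\mu^{(k)}}F(c_k)\bigr)W_\ell(c_1,\dots,c_\ell)\tH_\lambda$ with $W_\ell$ the normalized finite difference. But the Pieri-type rule you then propose for the individual operators $\mcQ_{(r_1,\dots,r_\ell)}$ is false as stated. Take $\alpha=(1,0)$, so $\mcQ_{(1,0)}=\mcR_{(1,1)}$. Since the first added cell always has content $c_1=1$ when $\mu=\emptyset$, your formula predicts $\mcQ_{(1,0)}\cdot 1=\sum d\,d\,c_1^{1}c_2^{0}\tfrac{c_1-c_2}{M}\tH_\lambda=\mcQ_{(0,0)}\cdot 1=D_2\cdot 1=e_2=s_{11}$, whereas a direct computation from \cref{thm:explicit_formula} gives $\mcR_{(1,1)}\cdot 1=s_2+(q+t)s_{11}$. (The indices $r_k$ are matched to the wrong ends of the chain; in these low-degree examples the formula becomes consistent only after reversing the pairing, and in any case such an individual rule is strictly stronger than \cref{thm:dif_eq''}: because $a_\alpha$ is symmetric in $\alpha$, the theorem only pins down the symmetrized combinations $\sum_{\sigma}\mcQ_{\sigma(\alpha)}$, which is exactly why the paper works with $\Qsym_\alpha$ throughout.)

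The deeper problem is that the step you correctly identify as ``the main obstacle'' --- the relation $\ad_{D_0/(-M)}(\mcQ_\alpha)=\sum_k\mcQ_{\alpha+\mathbf{e}_k}$, i.e.\ \cref{prop:D0_P} --- is left as an expectation, and your remark that it is ``immediate on the Macdonald side'' is circular: knowing that $\ad_{D_0/(-M)}$ multiplies a matrix entry by $\sum_k c_k$ only yields the increment rule if you already have a (correct) Pieri rule for each $\mcQ_\alpha$, which is what you are trying to establish. This relation, together with the $e_1$-commutator you do derive correctly from $[D(z),p_1]=\tfrac{M}{z}D(z)$, is where essentially all of the paper's work lies: the multi-particle path-extension calculus (\cref{lem:com_path_D0} and its generalization), the particle-moving \cref{lem:move_peak} needed to telescope the $\widetilde{\mathrm{ht}}$-weights, the induction of \cref{prop:commutation_P} with its $\sign(m-\alpha_i-1)$ splitting terms, and the symmetrized second commutation relation \cref{thm:P_commutation}. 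The paper never passes to Macdonald matrix entries at all; it assembles $\A_F^{(\ell)}$ from $\Qsym_\alpha$ purely by operator/path combinatorics. As it stands your proposal is a plausible dual strategy, but its central intermediate statement is wrong and its crux is unproven, so it does not constitute a proof.
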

Informally, $\A_F^{(\ell)}$ corresponds then to the sum over all path operators with $\ell$ particles, with an extra weight $a_i$ for a distance $i\geq 0$ between two particles. 

We devote the whole \cref{sec:path_op} to proving this theorem. Before we present the proof, let us show how \cref{thm:dif_eq''} implies \cref{prop:basis_a_lambda}, and thus \cref{thm:uniqueness} and \cref{thm:dif_eq'}.

\begin{proof}[Proof of Proposition \ref{prop:basis_a_lambda}]

	We start by proving that $\aaa_{F,\lambda}$ is a basis when $q=t=1$. In this case, $M=0$ and the derivative parts of the path operators (defined in \cref{eq:def_step_operator}) become
	$$\mcO(m)_{\big|q=t=1}=\left\{\begin{array}{cc}
		 1&\text{if } m=0   \\
	   0&\text{if } m<0.
	\end{array}\right.$$
	As a consequence, only paths with no down steps contribute to the expansion of a path operator $\mcQ_\alpha$ \cref{eq:Q^alpha-Q_alpha}, and all paths operators becomes multiplicative. More precisely, for $\alpha\in\ZZ_{\geq 0}^\ell$ we have
	$${\mcQ_{\alpha}}_{\big|q=t=1}=(-1)^{\ell}e_{\ell}+\sum_{\substack{\mu \vdash \ell\\{\ell(\mu)>1}}}c_\mu e_\mu,$$
	for some coefficients $c_\mu$. Indeed, the term in $e_{\ell}$ corresponds to paths for which all steps are flat except for the first one which has size $\ell$. We deduce that the coefficient of this term in the operator $\A_{F}^{(\ell)}$ is given by
	\begin{equation}
	  (-1)^\ell[e_\ell]\A_{F}^{(\ell)}\cdot 1_{\big | q=t=1}=\sum_{\alpha\in\ZZ_{\geq 0}^\ell} a_\alpha=\left(\sum_{k\geq 0} a_k\right)^\ell.
	\end{equation}
	Note that this quantity is well defined and invertible in the space of formal power series $\KK$ since $a_0=1$ for $F\in\{G_1, G_2\}$.
	We deduce the expansion of $\aaa_\lambda$ in the elementary basis:
	$$(-1)^{|\lambda|}\aaa_{F,\lambda} {}_{\big | q=t=1}=e_\lambda\prod_{1\leq i\leq \ell(\lambda)}\sum_{\alpha \in \ZZ_{\geq 0}^{\lambda_i}} a_\alpha+\sum_{\mu<\lambda}d_{\mu}e_\mu,$$
	for some coefficients $d_\mu \in \KK$, and the rightmost sum is over partitions $\mu$ smaller than $\lambda$ with respect to the dominance order.
	By triangularity, we deduce that $\aaa_{F,\lambda}$ is a basis for $\Lambda$, when $q=t=1$.

	 Let $\mathcal{M}_n$ be the matrix of $(\aaa_{F,\lambda})_{\lambda\vdash n}$ in the elementary basis $(e_\mu)_{\mu\vdash n}$. Since the evaluation $\aaa_{F,\lambda}(q=t=1)$ is a basis, then $\det(\mathcal{M}_n)(q=t=1)$ is invertible in $\KK$. In particular,  
	$\det(\mathcal{M}_n)$ is also invertible in $\KK$, which finishes the proof.
	\end{proof}

\section{Building the path operators}
\label{sec:path_op}

In order to prove \cref{thm:dif_eq''}, we will study commutation relations between more and more complicated operators, starting from very simple ones. These commutation relations will have a natural interpretation in terms of extending simpler paths to more complicated ones.

The section is organized as follows: in \cref{ssec:path_prel} we give some preliminary results about $q$-integers and path operators. \cref{ssec:first_comm_rel} is devoted to commutation relations of operators with only one particle giving a combinatorial interpretation of the operators $\A_F$ (\cref{eq:A_F}). In \cref{ssec:second_comm_rel} we prove a family of commutation relations for symmetrized version of the operators $\mcQ_\alpha$ (see \cref{thm:P_commutation}).  Finally, we finish the proof of \cref{thm:dif_eq''} in \cref{ssec:proof_comb_formula}.
\subsection{Preliminaries}\label{ssec:path_prel}

We use the usual notation for $q$-integers;
$$[m]_q:=
\left\{
\begin{array}{cc}
    1+q+\dots+q^{m-1} & \text{if $m>0$}  \\
    -(1+q+\dots+q^{-m-1}) & \text{if $m<0$}\\
    0 & \text{if $m=0$}.
\end{array}\right.$$

\begin{lem}\label{lem:h_specialized}
  Let $n > 0$. We have
  \begin{align*}
    \frac{h_n\left[M\right]}{M} &= [n]_{qt},\\
    \frac{h_n\left[-M\right]}{-M} &= t^{n-1}[n]_{qt^{-1}} = q^{n-1}[n]_{tq^{-1}}. 
\end{align*}
  \end{lem}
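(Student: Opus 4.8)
The plan is to pass to generating functions via the plethystic exponential $\Omega$. Since $M=(1-q)(1-t)$ decomposes as the formal alphabet with positive letters $1,qt$ and negative letters $q,t$, one has $p_k[M]=(1-q^k)(1-t^k)$, and hence
\[\sum_{n\geq 0}h_n[M]z^n=\Omega[zM]=\frac{(1-qz)(1-tz)}{(1-z)(1-qtz)}.\]
Likewise, swapping the roles of the positive and negative letters gives
\[\sum_{n\geq 0}h_n[-M]z^n=\Omega[-zM]=\frac{(1-z)(1-qtz)}{(1-qz)(1-tz)}.\]
Both identities are just the product formula for $\Omega$ on a signed alphabet, so I would record them as the starting point.

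For the first formula, I would compute the generating function of the $q$-integers on the right-hand side. A direct summation gives $\sum_{n\geq 0}[n]_{qt}z^n=\frac{z}{(1-z)(1-qtz)}$, so it suffices to check $\Omega[zM]-1=\frac{Mz}{(1-z)(1-qtz)}$. This reduces to the one-line numerator identity $(1-qz)(1-tz)-(1-z)(1-qtz)=Mz$, after which comparing the coefficients of $z^n$ for $n\geq 1$ yields $h_n[M]=M[n]_{qt}$, which is the claim.

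For the second formula, I would first record the closed form $t^{n-1}[n]_{qt^{-1}}=q^{n-1}[n]_{tq^{-1}}=\frac{t^n-q^n}{t-q}$, an elementary simplification of the $q$-integers which in particular proves that the two stated right-hand sides agree. Its generating function is $\sum_{n\geq 1}\frac{t^n-q^n}{t-q}z^n=\frac{z}{(1-qz)(1-tz)}$. Multiplying by $-M$ and matching against $\Omega[-zM]-1=\frac{-Mz}{(1-qz)(1-tz)}$---which again follows from the single numerator computation $(1-z)(1-qtz)-(1-qz)(1-tz)=-Mz$---gives $h_n[-M]=-M\,\frac{t^n-q^n}{t-q}$ for $n\geq1$, as desired.

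There is no serious obstacle here: the whole argument is a routine manipulation of rational generating functions. The only point that needs care is the bookkeeping of the plethystic sign conventions, namely reading $M$ and $-M$ as signed two-term alphabets so that $\Omega[\pm zM]$ takes the displayed product form; once this is fixed, every step is a finite numerator or partial-fraction computation.
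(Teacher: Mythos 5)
Your proof is correct. It rests on the same underlying fact as the paper's proof---that $M=(1-q)(1-t)$ behaves plethystically like the signed alphabet $1+qt-q-t$---but you package the computation differently. The paper expands $h_n[A-B]=\sum_{i}(-1)^{n-i}e_{n-i}[B]h_i[A]$ with $\{A,B\}=\{1+qt,\,q+t\}$ and then feeds in the explicit values of $h_i$ and $e_i$ on those two-letter alphabets, i.e.\ it works coefficientwise through a convolution; you instead sum everything into the rational generating functions $\Omega[\pm zM]$ and reduce each identity to a single numerator computation $(1-qz)(1-tz)-(1-z)(1-qtz)=\pm Mz$ together with the closed forms $\sum_{n\ge1}[n]_{qt}z^n=\frac{z}{(1-z)(1-qtz)}$ and $\sum_{n\ge1}\frac{t^n-q^n}{t-q}z^n=\frac{z}{(1-qz)(1-tz)}$. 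The generating-function route is arguably cleaner (one identity per claim, and the equality $t^{n-1}[n]_{qt^{-1}}=q^{n-1}[n]_{tq^{-1}}$ falls out for free from the symmetric closed form $\frac{t^n-q^n}{t-q}$), while the paper's recursion makes the small-alphabet evaluations explicit, which it reuses in spirit elsewhere. Both are complete; no gaps.
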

  \begin{proof}
    This is an easy computation which relies on the following identities:
    \[ h_n[1+qt] = [n+1]_{qt}, \qquad h_n[q+t] = t^{n}[n+1]_{qt^{-1}} =
    q^{n}[n+1]_{tq^{-1}},\]
    for the homogeneous functions,
    \[e_n[1+qt] = \begin{cases} 1 &\text{ if } n = 0,\\
        1 +qt &\text{ if } n = 1,\\
        qt &\text{ if } n = 2,\\
      0 &\text{ if } n >2,\end{cases}, \qquad e_n[q+t] = \begin{cases} 1 &\text{ if } n = 0,\\
        q +t &\text{ if } n = 1,\\
        qt &\text{ if } n = 2,\\
        0 &\text{ if } n >2,\end{cases},\]
    for elementary functions, and the following recursions:
    \begin{align*}
        h_n\left[(1-t)(1-q)\right]
        &=\sum_{0\leq i\leq n}(-1)^{n-i}e_{n-i}\left[q+t\right]h_i\left[1+qt\right],\\
        h_n\left[-(1-t)(1-q)\right]
        &=\sum_{0\leq i\leq n}(-1)^{n-i}e_{n-i}\left[1+qt\right]h_i\left[q+t\right].\qedhere
    \end{align*}
\end{proof}

Recall the following commutation relation of \cref{eq:mcT-mcP'}:
 \begin{equation}\label{eq:mcT-mcP}
  \mcT_Y\cdot \mcP_Z=\Omega[YZ]\mcP_Z\cdot \mcT_Y,   
 \end{equation}
or equivalently
\begin{equation}\label{eq:mcT-mcP2}
\Omega[-YZ]\mcT_Y\cdot \mcP_Z=\mcP_Z\cdot \mcT_Y.
\end{equation}
Define $k \wedge r := \min(k,r), k \vee r := \max(k,r)$. We start from the following commutation relations for the one step operators $\mcO(k)$ defined in \cref{eq:def_step_operator}.

\begin{lem}\label{lem:com_h}
For any $k,r\geq 0$ we have
\begin{align}\label{eq:com_h2}
    \left[\mcO(-k),\mcO(r)\right]
        &=-M\sum_{1\leq i\leq k\wedge r} t^{i-1}[i]_{qt^{-1}}\cdot \mcO(r-i)\mcO(-(k-i))\\
        &=-M\sum_{1\leq i\leq k\wedge r} [i]_{qt}\cdot \mcO(-(k-i))\mcO(r-i).\label{eq:com_h}
    \end{align}
\end{lem}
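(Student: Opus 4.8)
The plan is to prove the two commutation relations by exploiting the factorization $\mcO(r) = (-1)^r e_r[X]$ (multiplication) and $\mcO(-k) = h_k^\perp[MX]$ (a translation-type operator), and then reducing everything to the single commutation relation \eqref{eq:mcT-mcP} between $\mcT$ and $\mcP$. The cleanest route is to recognize both generating-function-level identities: set $A(w) := \sum_{r\geq 0} \mcO(r) w^r = \Omega[-wX]$ and $B(z) := \sum_{k\geq 0} \mcO(-k) z^k = \sum_{k\geq 0} h_k^\perp[MX] z^k = \mcT_{Mz}$ (translation by the alphabet $Mz$, since $\mcT_Y f[X] = f[X+Y]$). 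Then $A(w)$ is precisely multiplication by $\Omega[-wX] = \mcP_{-w}$ evaluated appropriately, so the whole content is the single relation $\mcT_{Mz}\cdot \mcP_{-w} = \Omega[-Mzw]\,\mcP_{-w}\cdot \mcT_{Mz}$ coming from \eqref{eq:mcT-mcP2}.

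First I would pass to generating functions and compute $B(z)A(w)$ and $A(w)B(z)$, so that the commutator $[B(z),A(w)]$ is controlled by the scalar factor $\Omega[-Mzw] = \omega$-type expansion; concretely $\mcT_{Mz}\mcP_{-w} = \Omega[-Mzw]\mcP_{-w}\mcT_{Mz}$ gives $A(w)B(z) = \Omega[-Mzw]\, B(z)A(w)$, hence
\[
  [B(z),A(w)] = \bigl(1 - \Omega[-Mzw]^{-1}\bigr) A(w)B(z).
\]
The factor $1 - \Omega[-Mzw]^{-1} = 1 - \Omega[Mzw]^{-1}\cdots$ must then be expanded; since $\Omega[Mz w]^{-1} = \Omega[-Mzw]$ and $\Omega[-Mzw] = \sum_{i\geq 0}(-1)^i e_i[M]\,(zw)^i$ has only the terms $i=0,1,2$ surviving by \cref{lem:h_specialized}'s companion facts ($e_i[M]$ truncates), the prefactor becomes a finite, explicit series in $zw$ whose coefficients are exactly the $q$-integers $h_i[M]/M = [i]_{qt}$ (for \eqref{eq:com_h}) or $h_i[-M]/(-M) = t^{i-1}[i]_{qt^{-1}}$ (for \eqref{eq:com_h2}), by \cref{lem:h_specialized}. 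Extracting the coefficient of $z^k w^r$ then matches each term $\mcO(r-i)\mcO(-(k-i))$ or $\mcO(-(k-i))\mcO(r-i)$ against the factor $A(w)B(z)$ versus $B(z)A(w)$, and the two forms \eqref{eq:com_h} and \eqref{eq:com_h2} arise from whether one normal-orders the scalar factor to the left or the right of the remaining operators.

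The main obstacle I anticipate is bookkeeping the truncation and the direction of normal ordering to land on the two distinct explicit coefficients $[i]_{qt}$ and $t^{i-1}[i]_{qt^{-1}}$, rather than the identity itself, which is routine. Specifically, one must be careful that the prefactor $\bigl(1-\Omega[-Mzw]^{-1}\bigr)$ sits \emph{to the left} of the product $\mcO(r)\mcO(-k)$ in one ordering and effectively to the right in the other, so that reabsorbing it shifts the arguments by $i$ in $\mcO(r-i)$ and $\mcO(-(k-i))$; the coefficient then comes out as $h_i[-M]/(-M)$ in one case (giving $t^{i-1}[i]_{qt^{-1}}$) and $h_i[M]/M$ in the other (giving $[i]_{qt}$), with the sign $-M$ factored out. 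The upper limit $i \leq k\wedge r$ is forced automatically because $\mcO(r-i) = 0$ for $i>r$ and $\mcO(-(k-i))$ requires $i\le k$. I would therefore organize the computation entirely at the level of the fields $A(w)$ and $B(z)$, verify the scalar prefactor via \cref{lem:h_specialized}, and only at the very end extract coefficients to read off both \eqref{eq:com_h2} and \eqref{eq:com_h}.
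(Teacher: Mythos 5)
Your proposal follows essentially the same route as the paper: both reduce the commutator to the single exchange relation \eqref{eq:mcT-mcP'}/\eqref{eq:mcT-mcP2} between $\mcT$ and $\mcP$, expand the resulting scalar factor $\Omega[\pm Mzw]=\sum_{i\ge 0} h_i[\pm M](zw)^i$, extract the coefficient of $z^k w^r$, and evaluate $h_i[\pm M]$ via \cref{lem:h_specialized}; the paper simply extracts coefficients directly rather than phrasing the computation in terms of the fields $A(w)$ and $B(z)$.

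Two details in your write-up are wrong as stated, though neither is fatal to the strategy. First, the exchange relation is reversed: with $B(z)=\mcT_{Mz}$ and $A(w)=\mcP_{-w}$, relation \eqref{eq:mcT-mcP'} gives $B(z)A(w)=\Omega[-Mzw]\,A(w)B(z)$, not $A(w)B(z)=\Omega[-Mzw]\,B(z)A(w)$; consequently your displayed commutator $\bigl(1-\Omega[-Mzw]^{-1}\bigr)A(w)B(z)$ pairs the wrong coefficient family with the wrong operator ordering. The correct pairings are
\[
[B(z),A(w)]=\sum_{i\ge1}h_i[-M](zw)^i\,A(w)B(z)
\qquad\text{and}\qquad
[B(z),A(w)]=-\sum_{i\ge1}h_i[M](zw)^i\,B(z)A(w),
\]
which is exactly what your closing prose describes, so only the display needs fixing. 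Second, the claim that the prefactor is a \emph{finite} series because ``$e_i[M]$ truncates'' is false: $M=(1-q)(1-t)$ is a difference of alphabets, and $e_i[M]=(-1)^i h_i[-M]=(-1)^{i+1}M\,t^{i-1}[i]_{qt^{-1}}$ is generically nonzero for every $i\ge 1$ (what truncates at $i=2$ is $e_i[1+qt]$ and $e_i[q+t]$, which appear inside the proof of \cref{lem:h_specialized}, not $e_i[M]$ itself). The prefactor is an infinite series; the sum in the lemma is finite only because, as you correctly note at the end, the coefficients of $w^{r-i}$ in $A(w)$ and of $z^{k-i}$ in $B(z)$ vanish for $i>r$ and $i>k$ respectively.
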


\begin{proof}

    We take $Y=-z_1$ and $Z=M/z_2$ in \cref{eq:mcT-mcP} and \cref{eq:mcT-mcP2} respectively, and we extract the coefficient of $z_1^rz_2^{-k}$.
    We get
      \begin{align*}
    \left[h^\perp_k[MX],(-1)^r e_r[X]\right]
        =\sum_{1\leq i\leq k\wedge r} h_{i}[-M]\cdot (-1)^{r-i}e_{r-i}[X]h^\perp_{k-i}[MX]\\
        =-\sum_{1\leq i\leq k\wedge r} h_{i}[M]\cdot h^\perp_{k-i}[MX](-1)^{r-i}e_{r-i}[X].
    \end{align*}
    We conclude using the identities of \cref{lem:h_specialized} and the definition of the operators $\mcO(i)$ (\cref{eq:def_step_operator}).
\end{proof}

\subsection{First commutation relation}\label{ssec:first_comm_rel}
The main purpose of this section is to prove a first commutation relation, which gives a formula for the operator $\A_F$ (see \cref{eq:A_F}) in terms of path operators\footnote{We will give in \cref{prop:D0_P} an analog of this result for any operator $\mcQ_\alpha$.}.

 \begin{thm}[First commutation relation]\label{thm:A_n-P_n}
The following relations hold:
$$\frac{-1}{M}\left[D_{0},-e_1[X]\right] = \mcQ_{0},\qquad \frac{-1}{M}\left[D_{0},\mcQ_{n-1}\right] = \mcQ_{n},$$
for all $n \geq 1$. As a consequence,
    $$\A_F = -e_1[X]+\sum_{n \geq 0}a_{n+1}\mcQ_n.$$
\end{thm}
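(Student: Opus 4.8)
The plan is to prove the theorem in two stages: first establish the two commutator identities, then derive the formula for $\A_F$ as a formal consequence of them together with the definition~\eqref{eq:A_F}. The second stage is purely bookkeeping, so I will dispatch it first to see what must be proved. By definition $\A_F = \sum_{i\geq 0} a_i \ad^i_{D_0/(-M)}(-e_1[X])$. The two commutator identities say precisely that $\ad_{D_0/(-M)}(-e_1[X]) = \mcQ_0$ and that $\ad_{D_0/(-M)}(\mcQ_{n-1}) = \mcQ_n$ for $n\geq 1$. Iterating, $\ad^i_{D_0/(-M)}(-e_1[X]) = \mcQ_{i-1}$ for $i\geq 1$, while the $i=0$ term is just $-e_1[X]$. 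Substituting and reindexing $i = n+1$ gives $\A_F = -e_1[X] + \sum_{n\geq 0} a_{n+1}\mcQ_n$, as claimed.

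The heart of the matter is therefore the two commutator identities, and the key point is that both are instances of a single computation. Recall from \cref{exe:R_k} that $\mcQ_n = \mcR_{(1,0^n)}$, and that $\mcQ_{n-1} = \mcR_{(1,0^{n-1})}$; also $-e_1[X] = \mcO(1) = \mcR_{(1)}$ up to the valley-weight normalization, and $D_0 = \mcR_{(0)}$ by~\eqref{2steps}. So both identities assert that bracketing with $D_0/(-M)$ appends one more ``$0$'' to the reparametrized operator, which at the level of the original $\beta$-encoding corresponds to appending a flat up step followed by all ways of completing to an alternating path. I would prove this directly using \cref{cor:R_vertex_op}, which expresses $\mcQ_{n-1} = \mcR_{(1,0^{n-1})}$ as a coefficient extraction from $D(z_1)\cdots D(z_n)$ over a product of factors $(1-qtz_{i+1}/z_i)^{-1}$, and then compute $[D_0,\mcR_{(1,0^{n-1})}]$ by moving $D_0 = [z_0^0]D(z_0)$ through the product. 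The commutation relation~\eqref{CommutationD} of the $D$-operators controls how $D(z_0)$ passes across $D(z_1)\cdots D(z_n)$, and the $\omega$-factors produced should recombine with the Negut denominator so that the commutator collapses to the single coefficient extraction defining $\mcQ_n = \mcR_{(1,0^n)}$, with the overall scalar $-M$ arising from the difference of the two orderings.

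An alternative, more hands-on route—likely closer to what the authors intend given that \cref{lem:com_h} has just been proved—is to work directly with the step operators. Since $D_0 = \sum_{k\geq 0}\mcO(-k)\mcO(k)$ by~\eqref{eq:Dk}/\eqref{2steps}, and $\mcQ_{n-1}$ is a sum of products $\mcO(\gamma_1)\cdots\mcO(\gamma_{2n})$ over paths in $\bR_{(1,0^{n-1})}$ weighted by valley weights, I would expand the commutator $[D_0,\mcQ_{n-1}]$ and use \cref{lem:com_h} to move the operators $\mcO(-k)$ and $\mcO(k)$ coming from $D_0$ past the step operators of each path. Each application of~\eqref{eq:com_h2} or~\eqref{eq:com_h} produces a factor $-M\, t^{i-1}[i]_{qt^{-1}}$ (equivalently $-M\,[i]_{qt}$), which after division by $-M$ must match exactly the change in valley weight $(qt)^{\height}$ incurred by raising the relevant valley by $i$; the telescoping of these contributions over all valleys should yield the new path family $\bR_{(1,0^n)}$ with its correct valley weights. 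The main obstacle will be this bookkeeping: controlling how the insertion of the two extra steps from $D_0$ interacts with every valley height simultaneously, and checking that the $q$-integer weights from \cref{lem:com_h} assemble into the geometric valley weights $(qt)^{\height_\beta(V)}$ without spurious boundary terms. I expect the cleanest argument to run the VOA computation of the previous paragraph, using \cref{lem:LargerPart} to handle the geometric series in $qt$, and relegating \cref{lem:com_h} to a consistency check.
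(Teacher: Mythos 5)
Your reduction of the formula for $\A_F$ to the two commutator identities is correct, and your ``alternative, more hands-on route'' is in fact the paper's proof: the authors expand $D_0=\sum_{k\geq 0}\mcO(k)\mcO(-k)$ (note the order --- you wrote $\mcO(-k)\mcO(k)$; the order matters, since in $[\mcO(k)\mcO(-k),\mcO(r)]$ only one of the two Leibniz terms survives depending on the sign of $r$), commute it past each step of each path via \cref{lem:com_h}, and interpret the result as extending the path by one valley. However, the part you defer as ``bookkeeping'' is where all of the content of the theorem lives, and you have not supplied it. Concretely, the paper's \cref{lem:com_path_D0} shows that the local factor $[y_{V'}-y_V]_{qt}$ produced by \cref{lem:com_h}, once multiplied by the ratio $(qt)^{1-\max(y_V,y_{V'})}$ of the old and new valley weights, equals the \emph{difference} $[y_{V'}]_{(qt)^{-1}}-[y_V]_{(qt)^{-1}}$ of contributions of the two valleys involved in the extension; one must then invert the order of summation via an explicit bijection between pairs (path, extended step) and pairs of consecutive valleys of the extended path with distinct heights, so that the coefficient of each $\mcO_{(1,0^{n})}(\gamma')$ telescopes to $[y_{V_{n+1}}]_{(qt)^{-1}}-[y_{V_0}]_{(qt)^{-1}}=[1]-[0]=1$. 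Without this identity and the telescoping there is no reason for the $q$-integer weights to disappear, and asserting that they ``should'' assemble into the geometric valley weights is precisely the step requiring proof.

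Your preferred VOA route is also not justified as written. Computing $[D_0,\mcR_{(1,0^{n-1})}]$ from \cref{cor:R_vertex_op} requires moving $D(w)$ across $D(z_1)\dotsm D(z_n)$, and the two orderings yield the \emph{same} normal-ordered kernel multiplied by two \emph{different} Laurent expansions of the prefactors: $\prod_i\omega(z_i/w)$ expanded in $z_i/w$ versus $\prod_i\omega(w/z_i)$ expanded in $w/z_i$, which are not even expansions of the same rational function since $\omega(1/x)\neq\omega(x)$. The commutator is thus a sum of contact terms at the poles $w=q^{\pm1}z_i$, $w=t^{\pm1}z_i$, and the claim that these ``recombine with the Negut denominator'' into the single coefficient extraction defining $\mcQ_n$ is an assertion, not an argument; it is not obviously easier than the combinatorial bookkeeping you are trying to avoid. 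Either route can be completed, but as it stands the central computation is carried out in neither.
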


Without further mention, all paths in this section are one particle operators i.e paths decorated with only one particle on the first peak. In other terms, they belong to a set $\bR_{1,0^{n}}=\bQ_{n}$ for some $n\geq 0$.
 
The first step is to interpret combinatorially the commutation relations of \cref{lem:com_h}. We introduce the following definition.
\begin{defi}\label{def:path_extension}
    Consider an alternating path $\gamma=(\gamma_1,\gamma_2,\dots,\gamma_{2n})$. Let $1\leq j\leq 2n$ such that $r:=\gamma_j\neq 0$.
    
    We say that a path $\gamma'$ of length $2n+2$ is obtained by extending the $j$-th step of $\gamma$ and we write $\gamma\nearrow_j\gamma'$, if the step $\gamma_j$ is replaced by three steps $\gamma'_j,\gamma'_{j+1},\gamma'_{j+2}$ as follows.
    \begin{itemize}
        \item If $r>0$, then $(\gamma'_j,\gamma'_{j+1},\gamma'_{j+2})=(k,-(k-i),r-i)$, for some integers $k>0$ and $0<i\leq k\wedge r$.
        \item If $r<0$, then $(\gamma'_j,\gamma'_{j+1},\gamma'_{j+2})=(r+i,k-i,-k)$ for some
          integers $k>0$ and $0<i\leq k\wedge -r$.
    \end{itemize}
    These two cases are illustrated in \cref{fig:add_valley}. 
    The ordered pair of valleys involved in this extension (given from left to right) will be denoted $\Valley(\gamma\nearrow_j\gamma')$. In other terms,
    $\Valley(\gamma\nearrow_j\gamma')=(V,V')$
    where $V$ and $V'$ are the valleys of $\gamma'$ with respective $x$-coordinates $2\lfloor\frac{j}{2} \rfloor$ and $2\lfloor \frac{j}{2}\rfloor+2$). 

    When $\gamma_j=0$, the extension set $\gamma\nearrow_j\gamma'$ will be empty by convention.
\end{defi}
Note that when the $j$-th step of $\gamma$ is extended, the indices of the steps on its right increase by 2: $\gamma_l=\gamma'_{l+2}$ for any $i<l\leq 2n$. Moreover, if the step $\gamma_j$ is incident to a valley $V$, the extension operation creates a valley $\tilde V$ whose $y$-coordinate is larger than the $y$-coordinate of $V$.

\begin{exe}
    Consider the following alternating paths
    \begin{equation*}
        \left\{\begin{array}{l}
    \gamma=(3,-2,4,-2), \\
    \gamma'=(3,-2,6,-5,3,-2),\\
    \gamma''=(3,0,0,-2,4,-2).
        \end{array}\right.
    \end{equation*}
    
    Then $\gamma'$ is obtained form $\gamma$ by extending the third step of $\gamma$, and $$\Valley(\gamma\nearrow_5\gamma')=((2,1),(4,2)).$$
    Moreover, $\gamma''$ can be obtained by extending either the first or the second step of $\gamma$.
\end{exe}

\begin{rmk}\label{rmk:extension_parity}
Note that the extension valleys $\Valley(\gamma\nearrow_j\gamma')=(V,V')$ (from left to right) always have different $y$-coordinates. Furthermore, $y_{V'}>y_{V}$ if and only if the step extended is an up step ($j$ is odd). 
\end{rmk}

\begin{figure}[t]
    \centering
    \begin{subfigure}{0.45\textwidth}
        \centering
        \includegraphics[width=0.8\textwidth]{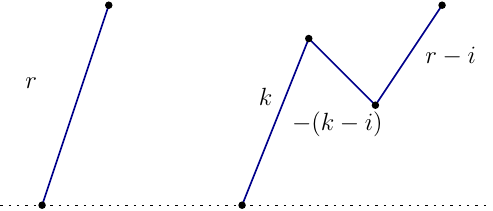}
        \subcaption*{The case $r>0$.}
    \end{subfigure}
    \begin{subfigure}{0.45\textwidth}
        \centering
        \includegraphics[width=0.8\textwidth]{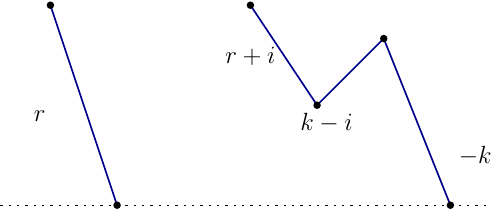}
        \subcaption*{The case $r<0$.}
    \end{subfigure}
    \caption{Extending a one step path.}
    \label{fig:add_valley}
\end{figure}

It turns out that commuting one particle operators with $\frac{-D_0}{M}$ corresponds to extending paths. 
\begin{lem}\label{lem:com_path_D0}
    Let $\gamma=(\gamma_1,\dots \gamma_{2n}) \in \bQ_{(n-1)} = \bR_{(1,0^{n-1})}$. Then
    \begin{equation*}\label{eq:cometa1}
		\ad_{\frac{D_0}{-M}}\left(\mcO_{(1,0^{n-1})}(\gamma)\right)=\sum_{1\leq i\leq 2n}\sum_{\gamma\nearrow_i\gamma'}\left([y_{V'}]_{(qt)^{-1}}-[y_{V}] _{(qt)^{-1}}\right)\mcO_{(1,0^{n})}(\gamma'),
    \end{equation*}
    where the second sum is taken over all paths $\gamma'$ obtained by extending the $i$-th step of $\gamma$, and where $\Valley(\gamma\nearrow_i\gamma')=(V,V').$
\end{lem}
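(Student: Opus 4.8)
The plan is to reduce the statement to a commutator of step operators and then exploit that $[D_0,\cdot]$ is a derivation, which localises the computation at a single step. Since the valley weight $\vweight_{(1,0^{n-1})}(\gamma)$ is a scalar in $\QQ(q,t)$, I would first pull it out,
\[
\ad_{\frac{D_0}{-M}}\bigl(\mcO_{(1,0^{n-1})}(\gamma)\bigr)=\frac{-1}{M}\,\vweight_{(1,0^{n-1})}(\gamma)\,\bigl[D_0,\mcO(\gamma_1)\cdots\mcO(\gamma_{2n})\bigr],
\]
and expand $D_0=\sum_{m\ge 1}\mcO(m)\mcO(-m)$ using \cref{2steps} (the $m=0$ term is the identity and drops out of the commutator). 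The Leibniz rule for $\ad_{D_0}$ then gives
\[
\bigl[D_0,\mcO(\gamma_1)\cdots\mcO(\gamma_{2n})\bigr]=\sum_{j=1}^{2n}\mcO(\gamma_1)\cdots\mcO(\gamma_{j-1})\,\bigl[D_0,\mcO(\gamma_j)\bigr]\,\mcO(\gamma_{j+1})\cdots\mcO(\gamma_{2n}),
\]
so each term modifies exactly the $j$-th step, which is the origin of the sum over $j$ and over the extensions $\gamma\nearrow_j\gamma'$.

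Next I would compute the local commutator $[D_0,\mcO(\gamma_j)]=\sum_{m\ge1}[\mcO(m)\mcO(-m),\mcO(\gamma_j)]$ with \cref{lem:com_h}, using in both cases the \emph{second} form of the relation (coefficient $[i]_{qt}$), which is the one that produces an alternating triple with no further reordering. If $\gamma_j=r>0$ (an up step, $j$ odd) then $\mcO(m)$ commutes with $\mcO(r)$ and
\[
[\mcO(m)\mcO(-m),\mcO(r)]=-M\sum_{1\le i\le m\wedge r}[i]_{qt}\,\mcO(m)\mcO(-(m-i))\mcO(r-i),
\]
whose triple $(m,-(m-i),r-i)$ is exactly the up-step extension of \cref{def:path_extension} with $k=m$. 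If $\gamma_j=r<0$ (a down step, $j$ even), then $\mcO(-m)$ commutes with $\mcO(r)$, and writing $r=-s$ and using $[\mcO(m),\mcO(-s)]=-[\mcO(-s),\mcO(m)]$ produces the triple $(r+i,m-i,-m)$, the down-step extension with $k=m$. In both cases the parities of the neighbouring factors $\mcO(\gamma_{j-1})$ and $\mcO(\gamma_{j+1})$ make $\gamma'$ alternating, and the ranges $m\ge1$, $1\le i\le m\wedge|r|$ match precisely the admissible extensions; the flat case $\gamma_j=0$ contributes nothing, matching the empty-extension convention.

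The heart of the argument is the coefficient bookkeeping. After the prefactor $\tfrac{-1}{M}$ cancels the $-M$ of \cref{lem:com_h}, every extension carries the bare coefficient $+[i]_{qt}$ (up steps) or $-[i]_{qt}$ (down steps, the sign coming from the extra transposition). To read off the coefficient in front of $\mcO_{(1,0^{n})}(\gamma')$ I must divide by the valley weight $\vweight_{(1,0^{n})}(\gamma')$, i.e.\ multiply the bare coefficient by $\vweight(\gamma)/\vweight(\gamma')$. Tracking which valleys are created or raised: an up-step extension adds a single new valley $V'$ of height $y_{V'}=y_V+i$, while a down-step extension raises the valley $V$ to height $y_V=y_{V'}+i$ and inserts a new valley $V'$ at the old height. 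In the one-particle setting the relevant valleys always have exactly one particle to their left, so their $\beta$-height equals their $y$-coordinate minus $1$. Combining this with the elementary identity $[i]_{(qt)^{-1}}=(qt)^{1-i}[i]_{qt}$ turns the bare coefficient into $(qt)^{-y_V}[i]_{(qt)^{-1}}=[y_{V'}]_{(qt)^{-1}}-[y_V]_{(qt)^{-1}}$ for up steps and into $-(qt)^{-y_{V'}}[i]_{(qt)^{-1}}=[y_{V'}]_{(qt)^{-1}}-[y_V]_{(qt)^{-1}}$ for down steps, which is the asserted coefficient.

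I expect the main obstacle to be precisely this weight accounting: identifying for each type of extension which valleys change their $\beta$-height (and hence the exact power of $qt$ contributed by the ratio $\vweight(\gamma)/\vweight(\gamma')$), and verifying that this power combines with $[i]_{qt}$ to telescope into the difference $[y_{V'}]_{(qt)^{-1}}-[y_V]_{(qt)^{-1}}$. The two structural inputs that make the computation clean—namely that $[D_0,\cdot]$ localises the commutator to a single step, and that the second form of \cref{lem:com_h} yields the alternating triple directly—are comparatively straightforward; the care is entirely in the valley-weight normalisation.
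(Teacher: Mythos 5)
Your proposal is correct and follows essentially the same route as the paper's proof: localise the commutator via the Leibniz rule, apply the $[i]_{qt}$ form of \cref{lem:com_h} to each step (directly for up steps, via antisymmetry for down steps), identify the resulting triples with the extensions of \cref{def:path_extension}, and convert the bare coefficient $\pm[i]_{qt}$ into $[y_{V'}]_{(qt)^{-1}}-[y_V]_{(qt)^{-1}}$ by tracking the ratio $\vweight(\gamma)/\vweight(\gamma')=(qt)^{1-\max(y_V,y_{V'})}$. Your valley-weight bookkeeping, including the observation that the newly created or raised valley always has exactly one particle to its left, matches the paper's computation exactly.
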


\begin{proof}
Fix $r>0$. Then by \cref{lem:com_h}, we have
\begin{align*}
  \left[D_0,\mcO(r)\right]
  &=\sum_{k\geq 0}\Big[\mcO(k) \cdot \mcO(-k),\mcO(r)\Big]\\
  &=\sum_{k\geq 0}\mcO(k)\cdot \Big[\mcO(-k),\mcO(r)\Big]\\
  &=-M\cdot \sum_{k\geq 0}\sum_{1\leq i\leq k\wedge r}\left[i\right]_{qt}\mcO(k)\cdot \mcO(-(k-i))\mcO(r-i).
\end{align*}

Similarly if $r<0$, we have
\begin{align*}
  \left[D_0,\mcO(r)\right]
  &=\sum_{k\geq 0}\Big[\mcO(k)\cdot \mcO(-k),\mcO(r)\Big]\\
  &=\sum_{k\geq 0}\Big[\mcO(k),\mcO(r)\Big]\mcO(-k)\\
  &=-M\cdot\sum_{k\geq 0}\sum_{1\leq i\leq k\wedge -r}[-i]_{qt}\mcO(r+i)\cdot \mcO(k-i)\cdot \mcO(-k).
\end{align*}

Note that the summands in these equations correspond precisely to the possible ways of extending a step of degree $r$ in an alternating path as in \cref{def:path_extension}. More precisely, we have
\begin{align*}
	\ad_{\frac{D_0}{-M}}\left(\mcO_{(1,0^{n-1})}(\gamma)\right)
&=\frac{-1}{M}\sum_{1\leq i \leq 2n}\vweight_{(1,0^{n-1})}(\gamma)\mcO(\gamma_1)\dots \mcO(\gamma_{i-1})[D_0,\mcO(\gamma_i)]\mcO(\gamma_{i+1})\dots \mcO(\gamma_{2n})\\
&=\sum_{1\leq i \leq 2n}
\sum_{\gamma\nearrow_i\gamma'}[y_{V'}-y_V]_{qt}\vweight_{(1,0^{n-1})}(\gamma)\mcO(\gamma'_1)\dots  \mcO(\gamma'_{2n+2}).
\end{align*}
Notice that 
$$\vweight_{(1,0^{n})}(\gamma')=(qt)^{\height_{(1,0^{n})}(W)}\vweight_{(1,0^{n-1})}(\gamma),$$
where $W$ 
is the valley with the larger height among $(V,V'):=\Valley(\gamma\nearrow_i\gamma')$. As a consequence,
$$\vweight_{(1,0^{n})}(\gamma')=(qt)^{\max(\height_{(1,0^{n})}(V),\height_{(1,0^{n})}(V'))}\vweight_{(1,0^{n-1})}(\gamma)=(qt)^{\max(y_V,y_{V'})-1}\vweight_{(1,0^{n-1})}(\gamma).$$
We then have
\begin{align*}
	\ad_{\frac{D_0}{-M}}\left(\mcO_{(1,0^{n-1})}(\gamma)\right)
&=\sum_{1\leq i \leq 2n}
\sum_{\gamma\nearrow_i\gamma'}[y_{V'}-y_V]_{qt}(qt)^{1-\max(y_V,y_{V'})}\vweight_{(1,0^{n})}(\gamma')\mcO(\gamma'_1)\dots  \mcO(\gamma'_{2n+2})\\
&=\sum_{1\leq i \leq 2n}
\sum_{\gamma\nearrow_i\gamma'}[y_{V'}-y_V]_{qt}(qt)^{1-\max(y_V,y_{V'})}\mcO_{(1,0^{n})}(\gamma').
\end{align*}
We conclude using the fact that 
\begin{equation*}
  [y_{V'}-y_V]_{qt}(qt)^{1-\max(y_V,y_{V'})}=[y_{V'}]_{(qt)^{-1}}-[y_V]_{(qt)^{-1}}.\qedhere
\end{equation*}
\end{proof}

\begin{proof}[Proof of \cref{thm:A_n-P_n}]
We have
\begin{multline*}
	\frac{-1}{M}\left[D_{0},-e_1[X]\right]
        =\frac{-1}{M}\Big[\sum_{k\geq 0}\mcO(k)\cdot \mcO(-k),-e_1[X]\Big]\\
               =\sum_{k\geq 1}\mcO(k)\cdot \mcO(-(k-1))
               =\sum_{\gamma\in\bQ_{(0)}} \mcO_{(1)}(\gamma)
               =\mcQ_0,
             \end{multline*}
where the second equation is a consequence of~\eqref{eq:com_h}. 
Let $n\geq 1$. From \cref{lem:com_path_D0}, we have
 \[ \frac{-1}{M}\left[D_{0},\mcQ_{n-1}\right]
        =\sum_{\gamma \in \bQ_{(n-1)}} \ad^i_{\frac{D_0}{-M}}(\mcO_{(1,0^{n-1})}(\gamma))\\
        =\sum_{\gamma \in \bQ_{(n-1)}} \sum_{1\leq i\leq 2n} \sum_{\gamma\nearrow_i\gamma'} \left([y_{V'}]_{qt}-[y_{V}]_{qt}\right)\mcO_{(1,0^{n})}(\gamma'),\]
where $V$ and $V'$ are the two points of $\Valley(\gamma\nearrow_i\gamma')$ from left to right.
    \medskip
    
We now invert the two sums in the previous equation. To this purpose,
we note that for a fixed $\gamma'\in\bQ_{(n)}$ there is a one-to-one correspondence between:
\begin{itemize}
  \item pairs
$(V_j,V_{j+1})\subset
  \Valley(\gamma')$ of consecutive valleys of $\gamma'$ with different $y$-coordinates ($0\leq j\leq n$),
\item pairs $(\gamma, i)$, where $\gamma \in
  \bQ_{(n-1)}$, $1\leq i\leq 2n$ satisfying $\gamma \nearrow_i \gamma'$,
  \end{itemize}
  such that $\Valley(\gamma\nearrow_i\gamma')=(V,V').$ This correspondence is given by 
  \begin{equation*}
  \left\{
      \begin{array}{ll}
        i=1   & \text{if $j=0$}, \\
         i=2n  & \text{if $j=n$}\\
         i\in \left\{2j,2j+1\right\} &\text{if $1\leq j\leq n-1$},
      \end{array}\right.
  \end{equation*}
and the parity of $i$ is fixed by \cref{rmk:extension_parity}.
We then get
$$\frac{-1}{M}\left[D_{0},\mcQ_{n-1}\right]=\sum_{\gamma' \in \bQ_{(n)}}\mcO_{(1,0^n)}(\gamma')\sum_{0\leq j\leq n}\left([y_{V_{j+1}}]_{qt}-[y_{V_j}]_{qt}\right),$$
where $V_j$ is the valley $\gamma'$ with $x$-coordinate $2j$. Since $y_{V_0}=0$ and $y_{V_{n+1}}=1$, we deduce that
\begin{equation}\label{eq:com_D0_Pn}
	\frac{-1}{M}\left[D_{0},\mcQ_{n-1}\right] = \mcQ_{n} 
\end{equation}
as required.
\end{proof}

\subsection{Second commutation relation}\label{ssec:second_comm_rel}

We now prove a second commutation relation allowing us to obtain path operators with $\ell$ particles from path operators with $\ell-1$ particles.
Let $\alpha \in \mathbb{Z}_{\geq 0}^\ell$, and define the symmetrized operator:
$$\Qsym_\alpha:=\sum_{\sigma\in\mathfrak{S}_{\ell}}\mcQ_{\sigma(\alpha)},  
$$
where we denote the sequence $\sigma(\alpha):=(\alpha_{\sigma(1)},\dots,\alpha_{\sigma(\ell))})$. Let us introduce two simple operations on vectors $\alpha \in
\mathbb{Z}_{\geq 0}^\ell$: for $1\leq i\leq \ell$, we denote 
\begin{itemize}
  \item $\alpha^{\downarrow i}:=(\alpha_1,\dots,\alpha_{i-1},\alpha_{i+1},\dots,\alpha_\ell),$
\item $\alpha^{+ i} :=
  (\alpha_1,\dots,\alpha_{i-1},\alpha_{i}+ 1,\alpha_{i+1},\dots,\alpha_\ell).$
  \end{itemize}

The main result of this section is the following recursive formula for the operators $\Qsym_\alpha$.
\begin{thm}[Second commutation relation]\label{thm:P_commutation}
Let $\alpha \in \mathbb{Z}_{\geq 0}^\ell$ be a sequence of non-negative integers of length $\ell\geq2$. We then have
$$\Qsym_\alpha=\frac{1}{M}\sum_{1\leq i\leq \ell}\left[\ad^{\alpha_i}_{\frac{D_0}{-M}}\left(-e_1[X]\right),\Qsym_{\alpha^{\downarrow i}}\right].$$
\end{thm}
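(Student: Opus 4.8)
The plan is to expand both sides as signed sums over decorated alternating paths and match them, promoting the path-extension calculus behind \cref{lem:com_path_D0} from the single operator $D_0$ to the one-particle operators $A_i := \ad^{\alpha_i}_{D_0/(-M)}(-e_1[X])$. By \cref{thm:A_n-P_n} each $A_i$ is a one-particle operator --- explicitly $A_i=\mcQ_{\alpha_i-1}$ when $\alpha_i\geq 1$ and $A_i=-e_1[X]=\mcO(1)$ when $\alpha_i=0$ --- so each summand $\frac1M[A_i,\Qsym_{\alpha^{\downarrow i}}]$ is a commutator of a one-particle operator with a symmetrized $(\ell-1)$-particle operator. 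Writing both factors as sums of products of the step operators $\mcO(m)$ and using the derivation property of the bracket, the commutator expands into a double sum over a single pair consisting of one step $\mcO(\delta_a)$ of the one-particle path and one step $\mcO(\gamma_b)$ of the $(\ell-1)$-particle path, with all other steps left in place in their prescribed order.

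First I would isolate the only nonvanishing elementary commutators, namely those in which a down-step meets an up-step; \cref{lem:com_h} rewrites each as $-M$ times a sum of \emph{merged} pairs of steps sharing a new valley, weighted by $q$-integers $[i]_{qt}$. The prefactor $\frac1M$ cancels this $-M$, and --- exactly as in \cref{lem:com_path_D0} --- the $q$-integer weights reassemble into telescoping differences $[y_{V'}]_{(qt)^{-1}}-[y_{V}]_{(qt)^{-1}}$ of valley heights. Geometrically this is the extension move of \cref{def:path_extension} performed across the junction of the two paths: it inserts the one-particle path into the $(\ell-1)$-particle path at a valley, creating one new empty peak and thereby raising the distance carried by the inserted particle from $\alpha_i-1$ to $\alpha_i$, which is what the target $\Qsym_\alpha$ requires.

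Next I would fix an $\ell$-particle path $\gamma'$ appearing in some $\mcQ_{\sigma(\alpha)}$, mark the particle of distance $\alpha_i$ as the inserted one, and collect every way the right-hand side produces $\gamma'$. As in the proof of \cref{thm:A_n-P_n}, for a fixed marked particle these contributions are indexed by consecutive valleys of distinct heights, the parity of the extended step being fixed by \cref{rmk:extension_parity}; summing the telescoping weights over all valleys lying between the inserted particle's neighbours collapses to a single difference of boundary heights, hence to the coefficient $1$. The two orders in the bracket $A_i\Qsym_{\alpha^{\downarrow i}}-\Qsym_{\alpha^{\downarrow i}}A_i$ are expected to supply precisely the two boundary valleys needed to close the telescope. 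Finally, summing over the choice of $i$ and over the symmetrization of the remaining $\ell-1$ particles reproduces every permutation $\mcQ_{\sigma(\alpha)}$ with coefficient one, giving $\Qsym_\alpha$.

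The main obstacle is this last telescoping and its bookkeeping: one must establish a clean bijection between merge-configurations $(i,\sigma,\text{valley})$ and marked $\ell$-particle paths, and verify that the signs coming from the two bracket orders, together with the boundary values $y_{V_0}=0$ and the top valley height, combine to give the coefficient $1$ rather than a spurious power of $qt$. Checking this uniformly across the edge cases --- $\alpha_i=0$, where $A_i=-e_1[X]$ has no down-step and the merge degenerates, and insertions landing at the first or last position --- is the technical heart of the argument.
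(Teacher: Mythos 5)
Your opening move already contains the essential gap: you expand $\frac1M[A_i,\Qsym_{\alpha^{\downarrow i}}]$, with $A_i=\mcQ_{\alpha_i-1}$, ``into a double sum over a single pair consisting of one step of each path, with all other steps left in place in their prescribed order.'' A commutator of two \emph{products} of step operators admits no such expansion. Iterating $[AB,C]=A[B,C]+[A,C]B$ produces terms of the form $p_1\cdots p_{a-1}\,q_1\cdots q_{b-1}\,[p_a,q_b]\,q_{b+1}\cdots q_m\,p_{a+1}\cdots p_k$, in which the steps of the two paths are interleaved; these are not operators $\mcO_{\beta}(\gamma'')$ attached to any alternating path, and restoring a path order generates further commutators, each of which by \cref{lem:com_h} is again a sum of two-step products. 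So the objects you propose to telescope over are not the decorated paths of the statement, and the ``merge at a junction'' picture does not describe the expansion.

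The second, more structural problem is that the quantity you are implicitly computing at fixed $(i,\sigma)$ --- the unsymmetrized commutator $\frac1{-M}\bigl[\ad^{m}_{D_0/(-M)}(-e_1[X]),\mcQ_{\alpha}\bigr]$ --- is \emph{not} a sum of insertions of a new part $m$ with coefficient one. The paper's \cref{prop:commutation_P} shows it equals the internal insertions of $m$ \emph{plus} signed correction terms $\sign(m-\alpha_i-1)\sum_{r_1+r_2=m+\alpha_i}\mcQ_{\dots,r_1,r_2,\dots}$ in which the new particle and an existing one redistribute their distances. No valley-by-valley telescoping can collapse these to coefficient one, because they are genuinely present before symmetrization; they cancel only in the symmetrized sum, via the pairing $\sigma\mapsto\sigma\cdot(j,\ell)$ that matches the correction terms of two permutations, leaving the count $-(\ell-2)+(\ell-1)=1$. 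Moreover, the paper does not reach \cref{prop:commutation_P} by a direct path-merging argument: it inducts on $m$ through the Jacobi identity, reducing everything to commutators with the \emph{single} operators $-e_1[X]$ and $D_0$ (\cref{lem:com_path_e1} and \cref{prop:D0_P}), which is exactly the setting where the derivation property and your telescoping of valley heights are valid. To repair your proof you would need both this reduction and the symmetrization cancellation; neither appears in the sketch, and the ``technical heart'' you flag (signs and boundary valleys) is not where the difficulty actually lies.
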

\begin{rmk}
we know from the first commutation relation (\cref{thm:A_n-P_n}), that if $\alpha_i>0$ then
$$\ad^{\alpha_i}_{\frac{D_0}{-M}}\left(-e_1[X]\right)=\mcQ_{\alpha_i-1}.$$
Intuitively, one can then think of the second commutation relation as follows: the symmetrized operator $\Qsym_\alpha$ is obtained by removing (in all possible ways) a part $\alpha_i$ from $\alpha$, and then commuting the operator obtained with $\mcQ_{\alpha_i-1}/M$.
\end{rmk}

To simplify notation,  we will write in this section
$$[n]:= [n]_{(qt)^{-1}}.$$

\begin{lem}\label{lem:com_path_e1}
Fix an alternating path $\gamma \in \bR_{\beta}$ of length $2n+2$. Then
\begin{equation*}\label{eq:come_1}
        \left[\frac{e_1[X]}{-M}, \mcO_{\beta}(\gamma)\right]=\sum_{1\leq i\leq n+1}\mathbbm{1}_{\gamma_{2i}<0}\mcO_{\beta^{+i}}(\gamma^{+i}),
    \end{equation*}
    where $\gamma^{+i} \in \bR_{\beta^{+i}}$ is the alternating path obtained from
    $\gamma$ by replacing the $2i$-th step by $\gamma^{+i}_{2i}=\gamma_{2i}+1$ if possible (i.e if $\gamma_{2i}\neq 0$).
\end{lem}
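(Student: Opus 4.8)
The goal is to compute the commutator $\left[\frac{e_1[X]}{-M}, \mcO_{\beta}(\gamma)\right]$ for a fixed alternating path $\gamma \in \bR_\beta$ of length $2n+2$. Since $\mcO_\beta(\gamma)$ is, up to the scalar $\vweight_\beta(\gamma)$, a product of one-step operators $\mcO(\gamma_1)\dotsm\mcO(\gamma_{2n+2})$, the natural plan is to use the Leibniz rule for the adjoint action: writing $\ad_{e_1/(-M)}$ as a derivation, I would distribute it across the product, so that
\begin{equation*}
\left[\frac{e_1[X]}{-M}, \mcO(\gamma_1)\dotsm\mcO(\gamma_{2n+2})\right] = \sum_{k=1}^{2n+2}\mcO(\gamma_1)\dotsm\left[\frac{e_1[X]}{-M},\mcO(\gamma_k)\right]\dotsm\mcO(\gamma_{2n+2}).
\end{equation*}
This reduces the problem to computing the single-step commutator $\left[\frac{e_1[X]}{-M},\mcO(\gamma_k)\right]$ for each step.

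The key computational input is that $e_1[X]$ commutes with multiplication operators (the up-steps, where $\mcO(m)=(-1)^me_m[X]$ is multiplication), so those terms drop out; only the down-steps, where $\mcO(-m)=h_m^\perp[MX]$, can contribute. Thus I would first establish that $\left[\frac{e_1[X]}{-M},\mcO(m)\right]=0$ for $m\geq 0$, leaving only even-indexed steps $\gamma_{2i}$. For the down-steps I would compute $\left[\frac{e_1[X]}{-M}, h_m^\perp[MX]\right]$ directly. The cleanest route is via the vertex-operator formalism already set up in the paper: since $\mcP_{-z}$ acts by multiplication and $e_1[X]=[z^1]\mcP_{-z}\cdot(-1)$, while $h_m^\perp[MX]$ is a coefficient of the translation operator $\mcT_{M/z}$, I can extract this commutator from the normal-ordering relation $\mcT_Y\mcP_Z=\Omega[YZ]\mcP_Z\mcT_Y$ of \cref{eq:mcT-mcP'}, analogously to the derivation of \cref{lem:com_h}. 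The expected outcome is that the single down-step commutator raises the magnitude of the step by one, i.e. it produces $\mcO(\gamma_{2i}+1)=h_{-\gamma_{2i}-1}^\perp[MX]$ (when $\gamma_{2i}\neq 0$), with the correct scalar normalization supplied by the $\frac{1}{-M}$ and the $h_1[-M]=M$ factor; this is exactly the $k=1$ special case of \cref{lem:com_h} read appropriately.

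The remaining bookkeeping is to verify that this single-step modification of the $2i$-th step corresponds precisely to the path $\gamma^{+i}\in\bR_{\beta^{+i}}$ and that the valley-weight prefactors match. Replacing $\gamma_{2i}$ by $\gamma_{2i}+1$ shifts the $y$-coordinates of all valleys to the right of that step up by one; since $\beta^{+i}$ raises $\beta_i$ by one, the $\beta$-heights of all valleys (\cref{eq:def_height}) are unchanged, so $\vweight_{\beta^{+i}}(\gamma^{+i})=\vweight_\beta(\gamma)$, and the operator coefficients line up to give $\mcO_{\beta^{+i}}(\gamma^{+i})$ on the nose. I would also check the indicator $\mathbbm{1}_{\gamma_{2i}<0}$: when $\gamma_{2i}=0$ the step operator is the identity and its commutator with $e_1$ vanishes (consistent with the convention that no valley is created), so those terms are correctly excluded.

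The main obstacle I anticipate is not any single computation but the careful sign and normalization tracking: the operator $\mcO(m)$ is defined with three different cases, the scalar $\frac{1}{-M}$ interacts with the $h_i[-M]$ factors appearing in the single-step commutator, and one must confirm that extending an $h_m^\perp[MX]$ to an $h_{m+1}^\perp[MX]$ (i.e. making a down-step steeper by one) is exactly what $\beta\mapsto\beta^{+i}$ encodes combinatorially. Getting the prefactor to come out to exactly $1$ (rather than a spurious $q,t$-factor) is the delicate point, and it hinges on the identity $\frac{h_1[-M]}{-M}=1$, which is the $n=1$ instance of \cref{lem:h_specialized}.
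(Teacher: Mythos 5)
Your proposal is correct and follows essentially the same route as the paper: reduce via the Leibniz rule to single-step commutators, observe that up-steps (multiplication operators) contribute nothing, identify the down-step commutator $\left[\frac{e_1[X]}{-M},\mcO(m)\right]=\mcO(m+1)$ for $m<0$ as the $r=1$ case of \cref{lem:com_h}, and check that $\vweight_{\beta^{+i}}(\gamma^{+i})=\vweight_{\beta}(\gamma)$. The only blemish is the passing claim ``$h_1[-M]=M$'' (in fact $h_1[-M]=-M$), which is harmless since you later invoke the correct identity $\frac{h_1[-M]}{-M}=1$.
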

\begin{exe}
Consider the path $\gamma=(3,-1,2,-1,1,0,1,-1)\in \bR_{(1,2,0,1)} = \bQ_{(1,0,2,0)}$ given in~\cref{fig:RtoQ}.
    Then the path $\gamma^{+2}=(3,-1,2,0,1,0,1,-1) \in \bR_{(1,3,0,1)} = \bQ_{(1,0,0,2,0)}$ is well defined, while $\gamma^{+3}$ is not.
\end{exe}
\begin{proof}[Proof of \cref{lem:com_path_e1}]
    This is a consequence of the fact that
$$\vweight_{\beta}(\gamma)=\vweight_{\beta^{+i}}(\gamma^{+i}),$$
	and of the following commutation relation, obtained by taking $r=1$ in \cref{eq:com_h2}:
    \begin{equation*}
      \left[\frac{e_1[X]}{-M},\mcO(m)\right]=\left\{\begin{array}{ll}
         \mcO(m+1)&\text{if $m<0$},  \\
         0 & \text{if $m\geq 0$.}
    \end{array}\right.\qedhere  
    \end{equation*}
\end{proof}

 Recall from \cref{sub:path_op} that for any $\alpha \in \ZZ^n_{\geq 0}$ one has $\bQ_\alpha = \bR_{\psi^{-1}(\alpha)}$, where $\psi$ is a bijection between sequences in $\ZZ_{>0}\times \ZZ_{\geq 0}^{\ell-1}$ with size $n$ and sequences in $\ZZ_{\geq 0}^{n}$ with size $\ell-1$. This bijection switches from the parametrization by the number of particles over peaks to the distances between particles. Since we always have at least one particle over the first peak, the first particle will be always thought of as a \textit{frozen} particle. All the other particles are \textit{non-frozen}.

The following lemma allows us to relate some series of path operators where we have moved the position of one marked non-frozen particle by one step.
\begin{lem}\label{lem:move_peak}
For any $\alpha \in \mathbb{Z}_{\geq 0}^\ell$ and $1\leq i\leq \ell-1$, we have
\begin{equation}\label{eq:move_peak}
\sum_{\gamma \in\bQ_{\alpha^{+i}}}\left([\height_{\beta}(V_i)+1]-1\right)\mcO_{\beta}(\bfgamma) =\sum_{\gamma\in\bQ_{\alpha^{+(i+1)}}}[\height_{\beta'}(V_i')+1]\mcO_{\beta'}(\bfgamma),
\end{equation}
where $V_i$ (resp. $V'_i$) is the valley preceding (resp. succeeding) the $i+1$-th particle, $\beta = \psi^{-1}(\alpha^{+i})$, and $\beta' = \psi^{-1}(\alpha^{+(i+1)})$.
\end{lem}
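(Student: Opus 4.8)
The plan is to realize both sides of \eqref{eq:move_peak} as sums over one and the same set of alternating path shapes, matched by the identity on step sequences, and then to reconcile the coefficients through a single $q$-integer identity. I work throughout in the particle description of \cref{sub:path_op}, and write $[\,\cdot\,]=[\,\cdot\,]_{(qt)^{-1}}$ as in this section. A path counted on the left lies in $\bQ_{\alpha^{+i}}=\bR_\beta$ and carries its $(i+1)$-th particle (a non-frozen one, since $i+1\ge2$) on the peak $p+1$, where $p:=1+\sum_{k\le i}\alpha_k$; the very same particle is carried on the peak $p$ by a path counted on the right, which lies in $\bQ_{\alpha^{+(i+1)}}=\bR_{\beta'}$. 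Every other particle occupies the same peak in the two configurations, and since $\beta$ and $\beta'$ have the same number of peaks and the same degree, the sets $\bR_\beta$ and $\bR_{\beta'}$ consist of alternating paths with the same two endpoints.

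First I would record the effect of this one-particle move on a fixed step sequence $\gamma$. Let $V$ be the valley at abscissa $2p$, i.e. the valley lying between peaks $p$ and $p+1$. Then $V=V_i$ in the $\beta$-configuration (it precedes the $(i+1)$-th particle) and $V=V_i'$ in the $\beta'$-configuration (it succeeds it). Moving the particle from peak $p+1$ to peak $p$ increases by one the number of particles lying to the left of $V$, and leaves this count unchanged at every other valley. Hence, by the definition of the $\beta$-height in \eqref{eq:def_height}, $\height_{\beta'}(V_i')=\height_\beta(V_i)-1$, while all remaining heights are preserved. From the definition of the valley weight \eqref{eq:valley_weight_PMAP} this gives $\vweight_{\beta'}(\gamma)=(qt)^{-1}\vweight_\beta(\gamma)$ as an identity of monomials, and therefore $\mcO_{\beta'}(\gamma)=(qt)^{-1}\mcO_\beta(\gamma)$, the step operators coinciding because the step sequence is the same.

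Next I would identify the index sets. Because the heights agree at every valley except $V$, where the $\beta'$-height is one smaller, the constraint $\height_{\beta'}(\,\cdot\,)\ge0$ defining $\bR_{\beta'}$ is equivalent to the constraint $\height_\beta(\,\cdot\,)\ge0$ defining $\bR_\beta$ together with $\height_\beta(V_i)\ge1$. Under the identity on step sequences this reads $\bR_{\beta'}=\{\gamma\in\bR_\beta:\height_\beta(V_i)\ge1\}$. Writing $h:=\height_\beta(V_i)$ and substituting the relations above, the right-hand side of \eqref{eq:move_peak} becomes $\sum_{\gamma\in\bR_\beta,\,h\ge1}(qt)^{-1}[h]\,\mcO_\beta(\gamma)$, while the left-hand side is $\sum_{\gamma\in\bR_\beta,\,h\ge0}\bigl([h+1]-1\bigr)\,\mcO_\beta(\gamma)$.

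It then remains to compare coefficients term by term, which reduces to the elementary identity $[h+1]-1=(qt)^{-1}[h]$, valid for every $h\ge0$; in particular its vanishing at $h=0$ accounts for the discrepancy between the summation ranges $h\ge0$ and $h\ge1$. I expect the only delicate point to be the combinatorial bookkeeping: checking that exactly one valley changes its left-particle count, and that $V_i$ and $V_i'$ genuinely coincide with the interior valley at abscissa $2p$, so that no boundary valley of forced height zero is involved (this holds since $1\le p\le P'-1$, where $P'$ is the common number of peaks). Once this is pinned down, the $q$-integer identity closes the argument.
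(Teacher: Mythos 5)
Your proposal is correct and follows essentially the same route as the paper's proof: identifying $\bQ_{\alpha^{+(i+1)}}$ as the subset of $\bQ_{\alpha^{+i}}$ where the valley between peaks $p$ and $p+1$ has positive $\beta$-height, observing that the particle move shifts that single valley's height by one (hence rescales the weight by $qt$), noting that the excluded paths contribute zero, and closing with the identity $[h+1]-1=(qt)^{-1}[h]$. The only cosmetic difference is that you normalize both sides to the $\beta$-weights while the paper normalizes to the $\beta'$-weights; the computation is the same.
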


\begin{proof}
First, notice that $\bQ_{\alpha^{+(i+1)}} \subset \bQ_{\alpha^{+i}}$. This is obvious, when we think that $\bQ_{\alpha^{+i}}$ is obtained from $\bQ_{\alpha^{+(i+1)}}$ by moving the
$i+1$-th particle to the right (note that this is well defined since the length between the $i+1$-th and the
$i+2$-th particle in $\bQ_{\alpha^{+(i+1)}}$ given by $\left(\alpha^{+(i+1)}\right)_{i+1}$ is at least $1$; see~\cref{fig:move_marked_peak}). In particular $\beta ' = \psi^{-1}(\alpha^{+(i+1)}) = (\beta_1,\dots,\beta_{k}+1,\beta_{k+1},\dots,\beta_{n+1})$ for some $1 \leq k \leq n$, and $\beta = \psi^{-1}(\alpha^{+i}) = (\beta_1,\dots,\beta_{k},\beta_{k+1}+1,\dots,\beta_{n+1})$, so that
$$\vweight_{\beta}(\gamma)=qt\cdot \vweight_{\beta'}(\gamma)$$
for any $\gamma\in \bQ_{\alpha^{+(i+1)}}$.
We deduce that 
\begin{equation}\label{eq:move_peak_2}
\mcO_{\beta}(\gamma)=qt\cdot \mcO_{\beta'}(\gamma).
\end{equation}

Moreover,
$\bQ_{\alpha^{+i}}\backslash\bQ_{\alpha^{+(i+1)}}$
consists of paths in $\bQ_{\alpha^{+i}}$ such that the valley
preceding the $i+1$-th particle has minimal $\beta$-height, i.e.~$\height_{\beta}(V_i)=0$.
In particular, $([\height_{\beta}(V_i)+1]-1) = 0$, so that the contribution of all these paths
in the sum on the left-hand side
of~\cref{eq:move_peak} is zero. Therefore this sum can be rewritten as
\begin{align*}
\sum_{\gamma \in \bQ_{\alpha^{+(i+1)}}}\left([\height_\beta(V_i)+1]-1\right)\mcO_\beta(\gamma)
&=\sum_{\gamma \in \bQ_{\alpha^{+(i+1)}}}\left([\height_{\beta'}(V'_i)+2]-1\right)\mcO_\beta(\gamma)\\
&=\sum_{\gamma \in \bQ_{\alpha^{+(i+1)}}} qt\left([\height(V'_i)_{\beta'}+2]-1\right)\mcO_{\beta'}(\gamma).
\end{align*}
We used here the fact that $\height_\beta(V_i)=\height_{\beta'}(V'_i)+1$ and \cref{eq:move_peak_2}. We conclude with the fact that $qt\left([n+1]-1\right)=[n]$, for any $n\geq 0$.\qedhere
\end{proof}

\begin{figure}
    \centering
    \begin{subfigure}[t]{0.45\textwidth}
\includegraphics[width=0.8\textwidth]{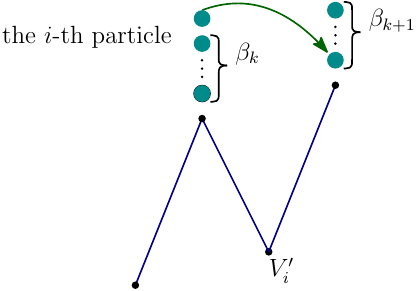}
\subcaption{An alternating path $\gamma \in \bQ_{\alpha^{+(i+1)}}$.}
\end{subfigure}
\begin{subfigure}[t]{0.45\textwidth}
\includegraphics[width=0.6\textwidth]{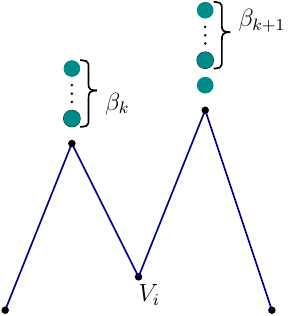}
\subcaption{The same path $\gamma \in \bQ_{\alpha^{+i}}$.}    
\end{subfigure}
    \caption{Moving the $i+1$-th particle of a path $\gamma \in \bQ_{\alpha^{+(i+1)}}$ one step to the right to consider the same path $\gamma \in \bQ_{\alpha^{+i}}$. The two peaks associated of $\gamma$ are respectively decorated by $\beta_k+1$ and $\beta_{k+1}$ particles, for $\psi^{-1}(\alpha^{+(i+1)}) = (\beta_1,\dots,\beta_{k}+1,\beta_{k+1},\dots,\beta_{n+1})$ and some $1 \leq k \leq n$.}
    \label{fig:move_marked_peak}
\end{figure}

\begin{prop}\label{prop:D0_P}
    Let $\alpha \in \left(\mathbb{Z}_{\geq 0}\right)^{\ell}$.
    Then $$\left[-D_{0}/M,\mcQ_{\alpha}\right]=\sum_{1\leq i \leq \ell}\mcQ_{\alpha^{+i}}.$$
\end{prop}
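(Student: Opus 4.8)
The plan is to reproduce, for arbitrary multi-particle paths, the single-particle computation behind \cref{thm:A_n-P_n}, and then to collapse the resulting sum with the help of \cref{lem:move_peak}. The case $\ell=1$ is precisely \cref{thm:A_n-P_n} (with $\alpha=(n)$ and $\alpha^{+1}=(n+1)$), which serves both as the base case and as the model for the general argument.

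First I would establish the multi-particle analogue of \cref{lem:com_path_D0}: for $\gamma\in\bQ_\alpha$ with $\beta:=\psi^{-1}(\alpha)$,
\[
\ad_{D_0/(-M)}\bigl(\mcO_\beta(\gamma)\bigr)=\sum_{a}\sum_{\gamma\nearrow_a\gamma'}\bigl([y_{V'}]_{(qt)^{-1}}-[y_{V}]_{(qt)^{-1}}\bigr)\,\mcO_{\beta'}(\gamma'),
\]
where each extension $\gamma\nearrow_a\gamma'$ inserts one new \emph{empty} peak, so $\beta'$ is $\beta$ with a single $0$ inserted, and $(V,V')=\Valley(\gamma\nearrow_a\gamma')$. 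The proof is line-for-line that of \cref{lem:com_path_D0}: the commutator obeys the Leibniz rule, each $[D_0,\mcO(\gamma_a)]$ is expanded by \cref{lem:com_h}, and the scalar $\vweight_\beta(\gamma)$ is absorbed into $\vweight_{\beta'}(\gamma')$ exactly as before, the only new bookkeeping being the power of $qt$ relating the two valley weights, controlled by the height of the newly created valley.

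Next I would sum over all $\gamma\in\bQ_\alpha$ and regroup by target path. Since the inserted peak is empty, each $\gamma'$ lies in some $\bQ_{\alpha^{+j}}$, where $j$ records the gap (between consecutive particles, or after the last one) into which the new peak falls. For fixed $\gamma'\in\bQ_{\alpha^{+j}}$, the pairs $(\gamma,a)$ with $\gamma\nearrow_a\gamma'$ correspond to the empty peaks of $\gamma'$ lying in gap $j$, and summing the coefficients $[y_{V'}]-[y_V]$ telescopes over the chain of valleys of that gap, leaving the difference of the two $q$-integers attached to its boundary valleys (the valley just before particle $j+1$ and the valley just after particle $j$; with the convention that the lower boundary of the first gap is the origin and the upper boundary of the last gap is the endpoint). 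Writing the regrouped expression as $\sum_{j=1}^{\ell}(A_j-B_j)$, where $A_j$ (resp.\ $B_j$) collects the upper (resp.\ lower) boundary contribution of gap $j$, the identity of \cref{lem:move_peak} applied to particle $j+1$ identifies $A_j$ with $B_{j+1}$ up to $\mcQ_{\alpha^{+j}}$, i.e.\ $A_j-B_{j+1}=\mcQ_{\alpha^{+j}}$. The sum then telescopes to $\sum_{j=1}^{\ell}\mcQ_{\alpha^{+j}}$, because $A_\ell=\mcQ_{\alpha^{+\ell}}$ (the endpoint has $\beta'$-height $0$, whence $[1]_{(qt)^{-1}}=1$) while $B_1=0$ (the first particle is frozen, so the relevant lower boundary is the origin, whose $q$-integer is $[0]_{(qt)^{-1}}=0$).

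The main obstacle is this last step: reconciling the shift conventions of the telescoping coefficients — most naturally expressed through the $y$-coordinates of the boundary valleys — with those of \cref{lem:move_peak}, which is stated in terms of $\beta$-heights with a $+1$ shift, and handling carefully the two edge cases of the frozen first particle at the origin and the free endpoint after the last particle. Once these boundary identifications are pinned down, the elementary relation $qt\bigl([n+1]_{(qt)^{-1}}-1\bigr)=[n]_{(qt)^{-1}}$ already exploited in \cref{lem:move_peak} supplies the remaining algebra.
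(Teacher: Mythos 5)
Your architecture is exactly the paper's: a multi-particle analogue of \cref{lem:com_path_D0}, regrouping by target path, telescoping within each gap, and \cref{lem:move_peak} to shift the boundary contribution from gap $j$ to gap $j+1$, with the origin and endpoint supplying $0$ and $\mcQ_{\alpha^{+\ell}}$. However, the two points you defer as ``bookkeeping'' are precisely where the multi-particle case differs from the one-particle case, and as stated both of your formulas are wrong. First, it is not true that every extension inserts an \emph{empty} peak: when the first step is extended and $\beta_1>1$, the paper's rule redistributes the particles as $\beta'=(1,\beta_1-1,\beta_2,\dots)$, keeping only the frozen particle on the new first peak. If you instead insert a $0$ (so $\beta'=(\beta_1,0,\dots)$), the newly created valley, which sits at height $l$ with $1\leq l\leq k\wedge\gamma_1$, has $\beta'$-height $l-\beta_1$, which is negative whenever $l<\beta_1$; the extended path then fails to lie in $\bR_{\beta'}$, and moreover the new peak lands in the gap after particle $\beta_1$ rather than after particle $1$, so the extension is attributed to $\mcQ_{\alpha^{+\beta_1}}$ instead of $\mcQ_{\alpha^{+1}}$ and the final count comes out wrong.

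Second, the coefficient in the multi-particle commutator is not $[y_{V'}]_{(qt)^{-1}}-[y_{V}]_{(qt)^{-1}}$. Absorbing $\vweight_\beta(\gamma)$ into $\vweight_{\beta'}(\gamma')$ costs a factor $(qt)^{-\height_{\beta'}(W)}$ where $W$ is the new valley, and $\height_{\beta'}(W)=y_W-c$ with $c$ the number of particles to the left of $W$ --- not $y_W-1$ as in the one-particle case. The resulting coefficient is $[\theight_{\beta'}(V')]-[\theight_{\beta'}(V)]$ with $\theight_{\beta'}(V)=y_V-\#\{\text{non-frozen particles left of }V\}$, which differs from your expression in every gap beyond the first. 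This corrected quantity is what makes \cref{lem:move_peak} (stated in terms of $[\height+1]$) applicable, and it is also what you tacitly use when you evaluate the endpoint contribution as $[1]=1$: the endpoint has $y$-coordinate $\ell$, so with raw $y$-coordinates you would get $[\ell]$, not $[1]$. So your boundary evaluations are inconsistent with your displayed coefficient formula. Identifying the correct modified height $\theight$ and the correct particle-transport rule is the actual content of the generalization from \cref{thm:A_n-P_n} to this proposition; flagging the reconciliation as ``the main obstacle'' without resolving it, while asserting formulas that the reconciliation would overturn, leaves a genuine gap.
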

Notice that \cref{eq:com_D0_Pn} corresponds to the special case of paths with one decorating particle (i.e $\ell=1$) in the previous proposition.

Before proving this proposition we define path extensions for path decorated with arbitrarily many particles. This definition generalizes \cref{def:path_extension} in which we treated the one-particle case. We recall that in this case a path is extended  by ``inserting'' two steps following some rules, and the first particle is always on the first peak. 

\begin{defi}
    Let $\gamma \in \bR_{\beta}$ be an alternating path of length $2n$. We say that a pair $(\gamma',\beta')$ is obtained by extending the $i$-th step of the pair $(\gamma,\beta)$ and we write $(\gamma,\beta)\nearrow_i(\gamma',\beta')$, if we have an extension for the underlying paths $\gamma\nearrow_i\gamma'$ as in \cref{def:path_extension}, and if $\beta'$ is given by
    \begin{equation*}
        \beta'=\left\{\begin{array}{ll}
          (1,\beta_1-1,\beta_2,\dots,\beta_n)      &  \text{if $i=1$}\\
              (\beta_1,\dots,\beta_{\lfloor \frac{i}{2}\rfloor},0,\beta_{\lfloor \frac{i}{2}\rfloor+1},\dots,\beta_{n})& \text{if $i > 1$.}
        \end{array}\right.
    \end{equation*}
    In terms of particles, this means that the (non-frozen) particles are moved with respect to the following rules (see \cref{fig:add_marked_peak}): 
\begin{itemize}
    \item when we extend an up-step connected to a peak decorated by $s$ non-frozen particles, these particles all go to the right peak after the extension,
    \item when we extend a down-step connected to a peak decorated by $s$ particles, all of them go to the left peak after the extension.
\end{itemize}
Note that these rules for particle positions are in such a way that the created valley has always a non-negative $\beta'$-height, and in particular $\gamma'\in \bR_\beta$.
\end{defi}

\begin{proof}[Proof of \cref{prop:D0_P}]
Let $2n:=2|\alpha|+2$ be the length of a path in $\bQ_\alpha = \bR_\beta$. Recall also that $\ell=\ell(\alpha)$ is the number of particles of such path.
Using the same arguments as in
\cref{lem:com_path_D0}, we get that for any $\gamma\in \bQ_\alpha$
$$\left[-D_0/M, \mcO_\beta(\gamma)\right]=\sum_{1\leq i\leq
2n}\sum_{(\gamma,\beta)\nearrow_i(\gamma',\beta')}\mcO_{\beta'}(\gamma')\left([\theight_{\beta'}(V')]-[\theight_{\beta'}(V)]\right),$$
where $(V,V')=\Valley(\gamma\nearrow_i\gamma')$ and 
\begin{align*}
  \theight_{\beta')}(V)
  &=y_V-\#\text{non-frozen particles on the left of $V$}\\
  &=\left\{
\begin{array}{cc}
     \height_{\beta'}(V)+1&  \text{if $x_V>0$} \\
     0 &\text{if $x_V=0$}. 
\end{array}
\right.  
\end{align*}
We now sum over $\gamma\in\bQ_\alpha$ and we invert the sums as in the proof of \cref{thm:A_n-P_n}. We get
$$\left[-D_0/M, \mcQ_{\alpha}\right]=\sum_{\substack{1\leq i\leq
    \ell:\\ \alpha_i > 0 \text{ or } i\in\{1,\ell\} }}\left(\sum_{\gamma\in\bQ_{\alpha^{+i}}}\mcO_{\psi^{-1}(\alpha^{+i})}(\gamma)\sum_{(V,V')}\left([\theight_{\psi^{-1}(\alpha^{+i})}(V')]-[\theight_{\psi^{-1}(\alpha^{+i})}(V)]\right)\right),$$
where the third sum runs over all pairs of consecutive valleys
$(V,V')$ (in this order) lying:
\begin{itemize}
    \item before the second particle if $i=1$,
    \item after the last particle if $i=\ell$,
    \item between the $i$-th and the $i+1$-th particle if $1<i<\ell$.
\end{itemize}
 Note that
this set is not empty, because $(\alpha^{+i})_i >1$ whenever $\alpha_i>0$. The telescopic sum gives
\[ \sum_{(V,V')}\left([\theight_{\psi^{-1}(\alpha^{+i})}(V')]-[\theight_{\psi^{-1}(\alpha^{+i})}(V)]\right) =
  [\theight_{\psi^{-1}(\alpha^{+i})}(V_i)]-[\theight_{\psi^{-1}(\alpha^{+i})}(V'_{i-1})]\]
where $V_i$ (resp. $V'_i$) is the valley preceding (resp. succeeding) the $i+1$-th particle, $\beta = \psi^{-1}(\alpha^{+i})$, and $\beta' = \psi^{-1}(\alpha^{+(i+1)})$, with the convention that
$V_0' = (0,0)$, and $V_\ell = (2n,\ell)$ are respectively the first and the last valleys. Hence,
\begin{align*}
\left[-D_{0}/M,\mcQ_{\alpha}\right]&=\sum_{\substack{1\leq i\leq
    \ell,\\\alpha_i > 0 \text{ or } i\in\{1,\ell\}}} \left(\sum_{\gamma\in\bQ_{\alpha^{+i}}}\left([\theight_{\psi^{-1}(\alpha^{+i})}(V_i)]-[\theight_{\psi^{-1}(\alpha^{+i})}(V'_{i-1})]\right)\mcO_{\psi^{-1}(\alpha^{+i})}(\gamma)\right).
\end{align*}
Notice that the condition $\alpha_i > 0$ can be dropped. Indeed,
when $1<i<\ell$, $\alpha_i = 0$ and $\gamma\in\bQ_{\alpha^{+i}} $ we have $V_i =
V'_{i-1}$. Therefore
\begin{align*}
  \left[-D_{0}/M,\mcQ_{\alpha}\right]=&\sum_{\gamma\in\bQ_{\alpha^{+\ell}}}[1]\mcO_{\psi^{-1}(\alpha^{+\ell})}(\gamma)-\sum_{\gamma\in\bQ_{\alpha^{+1}}}[0]\mcO_{\psi^{-1}(\alpha^{+1})}(\gamma)\\
  &+\sum_{1\leq
     i\leq\ell-1}\bigg(\sum_{\gamma\in\bQ_{\alpha^{+i}}}[\theight_{\psi^{-1}(\alpha^{+i})}(V_i)]\mcO_{\psi^{-1}(\alpha^{+i})}(\gamma)\\
	 &-\sum_{\gamma\in\bQ_{\alpha^{+(i+1)}}}[\theight_{\psi^{-1}(\alpha^{+(i+1)})}(V'_i)]\mcO_{\psi^{-1}(\alpha^{+(i+1)})}(\gamma)\bigg)\\
     =& \sum_{1\leq
i\leq\ell}\sum_{\gamma\in\bQ_{\alpha^{+i}}}\mcO_{\psi^{-1}(\alpha^{+i})}(\gamma),
\end{align*}
where we used \cref{lem:move_peak} to obtain the last equality. This finishes the proof of the proposition.
\end{proof}
\begin{figure}[t]
    \centering
    \begin{subfigure}{0.45\textwidth}
        \centering
        \includegraphics[width=0.8\textwidth]{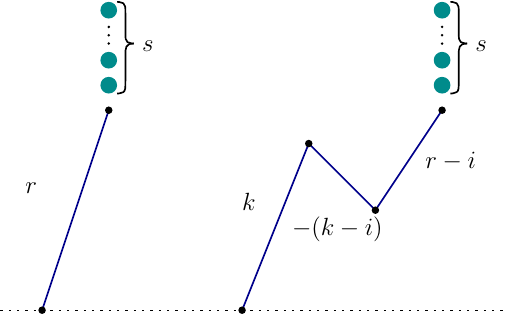}
        \subcaption*{The case $r>0$.}
    \end{subfigure}
    \begin{subfigure}{0.45\textwidth}
        \centering
        \includegraphics[width=0.8\textwidth]{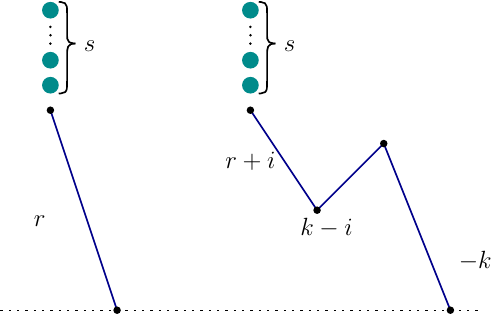}
        \subcaption*{The case $r<0$.}
    \end{subfigure}
    \caption{Moving particles when commuting $-D_{0}/M$ with a step of a
      decorated path. Here, the step has size $r$ and is incident to a peak decorated with $s$ non-frozen particles, for some $s\geq0$.}
    \label{fig:add_marked_peak}
  \end{figure}

\begin{prop}\label{prop:commutation_P}
   Fix two integers $m\geq 0, \ell \geq 1$, and let $\alpha\in\ZZ_{\geq 0}^\ell$ be a sequence of non-negative integers. Then 
    \begin{multline*}
      \frac{1}{-M}\left[\ad^{m}_{\frac{D_0}{-M}}\left(-e_1[X]\right),\mcQ_{\alpha}\right]=
      \sum_{1\leq i\leq\ell-1} \mcQ_{\alpha_1,\dots,\alpha_i,m,\alpha_{i+1},\dots,\alpha_\ell}
      \\+\sum_{1\leq i \leq\ell}\sign(m-\alpha_i-1)\sum_{\substack{r_1,r_2\geq m\wedge(\alpha_i+1)\\{r_1+r_2=m+\alpha_i}}}\mcQ_{\alpha_1,\dots,\alpha_{i-1},r_1,r_2,\alpha_{i+1},\dots,\alpha_\ell}.  
    \end{multline*}
  \end{prop}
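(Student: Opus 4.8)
The plan is to prove the identity by induction on $m$, handling the commutator with $\ad^{m}_{\frac{D_0}{-M}}(-e_1[X])$ recursively through the Jacobi identity while using \cref{prop:D0_P} to control all commutators with $\frac{D_0}{-M}$. Throughout, write $A:=\frac{D_0}{-M}$ and $B_m:=\ad^{m}_{A}(-e_1[X])$, so that $B_0=-e_1[X]$ and, by \cref{thm:A_n-P_n}, $B_m=\mcQ_{m-1}$ for $m\geq 1$. Set $\Phi_m(\alpha):=\frac{1}{-M}[B_m,\mcQ_\alpha]$, which is exactly the left-hand side of the proposition, and let $\Psi_m(\alpha)$ denote the right-hand side; the goal is to show $\Phi_m=\Psi_m$ for all $m\geq 0$ and all $\alpha$.

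First I would establish a recursion for $\Phi_m$. For $m\geq 1$ we have $B_m=[A,B_{m-1}]$, so the Jacobi identity gives
\[[B_m,\mcQ_\alpha]=[A,[B_{m-1},\mcQ_\alpha]]-[B_{m-1},[A,\mcQ_\alpha]].\]
By \cref{prop:D0_P} one has $[A,\mcQ_\mu]=\sum_{j}\mcQ_{\mu^{+j}}$ for any composition $\mu$; applying this to $\mcQ_\alpha$, recognizing $[B_{m-1},\mcQ_\nu]=-M\,\Phi_{m-1}(\nu)$, and dividing by $-M$ yields
\[\Phi_m(\alpha)=[A,\Phi_{m-1}(\alpha)]-\sum_{i=1}^{\ell}\Phi_{m-1}(\alpha^{+i}).\]
Since $[A,\Phi_{m-1}(\alpha)]$ is computed termwise by the same increment rule $\mcQ_\nu\mapsto\sum_j\mcQ_{\nu^{+j}}$, this recursion is an identity among $\ZZ$-linear combinations of the operators $\mcQ_\nu$, with all $q,t$-dependence having disappeared; the only operations appearing are incrementing a part in all possible ways and shifting $\alpha\mapsto\alpha^{+i}$.

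Next I would settle the base case $m=0$, where $\Phi_0(\alpha)=\frac{1}{-M}[-e_1[X],\mcQ_\alpha]=-\bigl[\tfrac{e_1[X]}{-M},\mcQ_\alpha\bigr]$ is computed directly from \cref{lem:com_path_e1}: commuting with $\tfrac{e_1[X]}{-M}$ replaces, in all possible ways, a single strictly negative down-step of a path in $\bQ_\alpha$ by a down-step larger by one, thereby creating one extra particle. Summing over $\bQ_\alpha$ and sorting the resulting paths by the position of the new particle relative to the old ones, I would match the outcome to $\Psi_0(\alpha)$: the paths in which the new particle lands strictly inside a gap of length $\alpha_i$ produce the split terms $\mcQ_{\dots,r_1,r_2,\dots}$ with $r_1+r_2=\alpha_i$ (all carrying the sign $\sign(-\alpha_i-1)=-1$), while the boundary configurations produce the insertions of the part $m=0$ between consecutive parts. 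This is a finite, if somewhat delicate, bookkeeping of valley heights and particle positions.

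Finally, the induction step is to verify that the explicit formula satisfies the same recursion, namely
\[\Psi_m(\alpha)=[A,\Psi_{m-1}(\alpha)]-\sum_{i=1}^{\ell}\Psi_{m-1}(\alpha^{+i}),\qquad m\geq 1,\]
where $[A,\,\cdot\,]$ acts by $\mcQ_\nu\mapsto\sum_j\mcQ_{\nu^{+j}}$. I would prove this formal identity by fixing a composition $\nu$ of length $\ell+1$ and comparing the coefficient of $\mcQ_\nu$ on both sides: on the right $\mcQ_\nu$ arises either by incrementing one part of a term of $\Psi_{m-1}(\alpha)$ or from a term of some $\Psi_{m-1}(\alpha^{+i})$, and these contributions must combine, after cancellation, into precisely the insertion and split terms of $\Psi_m(\alpha)$. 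I expect this coefficient matching to be the main obstacle: one has to track how the threshold $m\wedge(\alpha_i+1)$ in the split range and the sign $\sign(m-\alpha_i-1)$ transform when $m$ is lowered by one and a part is incremented, and check that the two families telescope so that exactly the terms of $\Psi_m(\alpha)$ survive. (In the case $\ell=1$, $m=1$, for instance, the coefficient of $\mcQ_{(s_1,s_2)}$ with $s_1+s_2=\alpha_1+1$ on the right works out to $1-\mathbbm{1}_{s_1\geq 1}-\mathbbm{1}_{s_2\geq 1}$, which indeed matches $\Psi_1$.) Together with the base case, the recursion then gives $\Phi_m=\Psi_m$ by induction on $m$, as desired.
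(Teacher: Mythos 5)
Your proposal is correct and takes essentially the same route as the paper: the base case $m=0$ is handled via \cref{lem:com_path_e1} together with the bookkeeping of where the new particle lands (the $0$-insertion terms arising precisely as the correction for positions on top of internal particles being counted twice in the split sums), and the induction on $m$ proceeds through the Jacobi identity combined with \cref{prop:D0_P}. The recursion $\Psi_m(\alpha)=[A,\Psi_{m-1}(\alpha)]-\sum_i\Psi_{m-1}(\alpha^{+i})$ that you propose to verify by coefficient matching is exactly the identity the paper checks by distinguishing the four cases $\alpha_i>m-1$, $\alpha_i=m-1$, $\alpha_i=m-2$, $\alpha_i<m-2$, and your $\ell=1$, $m=1$ sanity check is accurate.
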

\begin{proof}
We start by proving the case $m=0$ and then we use \cref{prop:D0_P} to
prove by induction the proposition for any $m$.

\medskip

Let us consider the following sum
\[ \sum_{1\leq i\leq \ell}\sum_{\substack{r_1,r_2\geq
    0\\{r_1+r_2=\alpha_i}}}\mcQ_{\alpha_1,\dots,\alpha_{i-1},r_1,r_2,\alpha_{i+1},\dots,\alpha_{\ell}}.\]
We can interpret it combinatorially as follows: for each $1 \leq i\leq
\ell$ we add a new particle between two old particles
separated by $\alpha_i$ peaks in all possible places, including a
particle over the first old particle, and a
particle below the second old particle, which gives $\alpha_i+1$ possible
places. This gives in total $\sum_{1 \leq i \leq \ell}(\alpha_i+1) =
|\alpha|+\ell$ possible positions. \cref{lem:com_path_e1} implies that
the commutation of $\mcO_{\psi^{-1}(\alpha)}(\gamma)$ with $-e_1[X]/M$ corresponds to
adding a new particle to all the peaks, which gives $|\alpha|+1$
positions. The difference between these two interpretations comes from
the fact that in the first sum we always add a particle below the
second old particle separated by $\alpha_i$ and over the first particle
separated by $\alpha_{i+1}$, which is the same particle. Therefore the
correct quantity corresponding to the
commutator $\left[\frac{e_1[X]}{M},\mcQ_{\alpha}\right] $ is equal
to the following correction of the first sum:
  \begin{align*}
\left[\frac{e_1[X]}{M},\mcQ_{\alpha}\right]
   =\sum_{1\leq i\leq\ell-1} \mcQ_{\alpha_1,\dots,\alpha_i,0,\alpha_{i+1},\dots,\alpha_\ell}-\sum_{1\leq i\leq \ell}\sum_{\substack{r_1,r_2\geq 0\\{r_1+r_2=\alpha_i}}}\mcQ_{\alpha_1,\dots,\alpha_{i-1},r_1,r_2,\alpha_{i+1},\dots,\alpha_{\ell}}.   
  \end{align*}
This finishes the proof of the proposition for $m=0$.

We now fix $m\geq 0$ and we assume that the proposition holds for $m$. The Jacobi identity gives:
\begin{align*}
	\left[\frac{\ad^{m+1}_{\frac{D_0}{-M}}\left(-e_1[X]\right)}{-M},\mcQ_{\alpha}\right]
  =&\left[\left[\frac{D_0}{-M},\frac{\ad^{m}_{\frac{D_0}{-M}}\left(-e_1[X]\right)}{-M}\right],\mcQ_{\alpha}\right]\\
  =&\left[\frac{D_0}{-M},\left[\frac{\ad^{m}_{\frac{D_0}{-M}}\left(-e_1[X]\right)}{-M},\mcQ_{\alpha}\right]\right]-\left[\frac{\ad^{m}_{\frac{D_0}{-M}}\left(-e_1[X]\right)}{-M},\left[\frac{D_0}{-M},\mcQ_{\alpha}\right]\right].
\end{align*}
Combining the recurrence assumption and \cref{prop:D0_P}, we obtain
\begin{multline*}
	\frac{1}{-M}\left[\ad^{m}_{\frac{D_0}{-M}}\left(-e_1[X]\right),\mcQ_{\alpha}\right]=
  \sum_{1\leq i\leq\ell-1}\mcQ_{\alpha_1,\dots,\alpha_i,m+1,\alpha_{j+1},\dots,\alpha_\ell}+\sum_{1\leq i \leq\ell}\sign(m-\alpha_i-1)\\
  \sum_{\substack{r_1,r_2\geq m\wedge(\alpha_i+1)\\{r_1+r_2=m+\alpha_i}}}\left(\mcQ_{\alpha_1,\dots,\alpha_{i-1},r_1+1,r_2,\alpha_{i+1},\dots,\alpha_\ell}+\mcQ_{\alpha_1,\dots,\alpha_{i-1},r_1,r_2+1,\alpha_{i+1},\dots,\alpha_\ell}\right)\\
  -\sum_{1\leq i \leq\ell}\sign(m-\alpha_i-2)\sum_{\substack{r_1,r_2\geq m\wedge(\alpha_i+2)\\{r_1+r_2=m+\alpha_i+1}}}\mcQ_{\alpha_1,\dots,\alpha_{i-1},r_1,r_2,\alpha_{i+1},\dots,\alpha_\ell}.
\end{multline*}
By distinguishing the four cases $\alpha_i>m-1$, $\alpha_i=m-1$, $\alpha_i=m-2$ and $\alpha_i<m-2$, one check that we obtain the required formula:
\begin{align*}
	\frac{1}{-M}\left[\ad^{m}_{\frac{D_0}{-M}}\left(-e_1[X]\right),\mcQ_{\alpha}\right]
=&\sum_{1\leq i\leq\ell-1}\mcQ_{\alpha_1,\dots,\alpha_i,m+1,\alpha_{i+1},\dots,\alpha_\ell}\\
  &+\sum_{1\leq i \leq\ell}\sign(m-\alpha_i)\sum_{\substack{r_1,r_2\geq (m+1)\wedge(\alpha_i+1)\\{r_1+r_2=m+1+\alpha_i}}}\mcQ_{\alpha_1,\dots,\alpha_{i-1},r_1,r_2,\alpha_{i+1},\dots,\alpha_\ell}.
\end{align*}
This finishes the proof of the proposition.
\end{proof}

Let us now prove \cref{thm:P_commutation}, which gives the second commutation relation announced at the beginning of the section.
\begin{proof}[Proof of \cref{thm:P_commutation}]

Fix $\ell \geq2$ and $\alpha\in\ZZ_{\geq 0}^{\ell}$. We have
\begin{align*}
        \frac{1}{M}\sum_{1\leq i\leq \ell}\left[\ad^{\alpha_i}_{\frac{D_0}{-M}}\left(-e_1[X]\right),\Qsym_{\alpha^{\downarrow i}}\right]
        &=\frac{1}{M}\sum_{1\leq i\leq \ell}\sum_{\sigma\in\mathfrak{S}_{\ell-1}}\left[\ad^{\alpha_i}_{\frac{D_0}{-M}}\left(-e_1[X]\right),\mcQ_{\sigma(\alpha^{\downarrow i})}\right]\\
        &=\frac{1}{M}\sum_{\sigma\in\mathfrak{S}_\ell}\left[\ad^{\alpha_{\sigma(\ell)}}_{\frac{D_0}{-M}}\left(-e_1[X]\right),
        \mcQ_{\sigma(\alpha^{\downarrow \ell})}\right].
\end{align*}
Applying \cref{prop:commutation_P}, this is also equal to
\begin{multline*}
	-\sum_{\sigma\in\mathfrak{S}_{\ell}} \sum_{1\leq j\leq \ell-2}\mcQ_{\alpha_{\sigma(1)},\dots,\alpha_{\sigma(j)},\alpha_{\sigma(\ell)},\alpha_{\sigma(j+1)},\dots,\alpha_{\sigma(\ell-1)}}\\
-\sum_{\sigma\in\mathfrak{S}_{\ell}}
        \sum_{1\leq j\leq \ell-1}
        \sign\left(\alpha_{\sigma(\ell)}-1-\alpha_{\sigma(j)}\right)
        \sum_{\substack{r_1,r_2\geq \alpha_{\sigma(\ell)} \wedge\left(\alpha_{\sigma(j)}+1\right)\\{r_1+r_2=\alpha_{\sigma(\ell)}+\alpha_{\sigma(j)}}}}\mcQ_{\alpha_{\sigma(1)},\dots,\alpha_{\sigma(j-1)},r_1,r_2,\alpha_{\sigma(j+1)},\dots}. \label{eq:thm:comP}
    \end{multline*}
The first term on the RHS gives 
$$-\sum_{1\leq j\leq
  \ell-2}\sum_{\sigma\in\mathfrak{S}_{\ell}}\mcQ_{\sigma \cdot (\ell, \ell-1,\dots,j+1)(\alpha)}
= -(\ell-2)\sum_{\sigma\in\mathfrak{S}_{\ell}}
\mcQ_{\sigma(\alpha)}=-(\ell-2)\Qsym_{\alpha},$$
where $(\ell, \ell-1,\dots,j+1)$ is the permutation of $\mathfrak{S}_\ell$ with cyclic notation.
Let us now compute the second term. For a fixed $1\leq j\leq \ell-1$,
define a bijection $\mathfrak{S}_{\ell} \rightarrow
\mathfrak{S}_{\ell}$ given by $\sigma \mapsto \sigma' := \sigma(j,\ell)$, where
$(j,\ell)$ is the transposition exchanging $j$ and $\ell$. It is easy
to check that a contribution to the second term in the RHS that comes from
$\sigma$ and the associated $\sigma'$ is equal to
$$\mcQ_{\sigma\cdot(\ell,\ell-1,\dots,j+1)(\alpha)}+\mcQ_{\sigma\cdot(\ell,\ell-1,\dots,j+1,j)(\alpha)}.$$
As a consequence, the sum over
all permutations $\sigma$ contributing to the second term on the RHS
with a fixed $1\leq j\leq \ell-1$ is equal to $\Qsym_{\alpha}$, so
that the total contribution is equal to $(\ell-1) \Qsym_{\alpha}$. In conclusion, 
        \begin{equation*}
			\frac{1}{M}\sum_{1\leq i\leq \ell}\left[\ad^{\alpha_i}_{\frac{D_0}{-M}}\left(-e_1[X]\right),\Qsym_{\alpha^{\downarrow i}}\right]=\big(-(\ell-2)+(\ell-1)\big)\Qsym_{\alpha}=\Qsym_{\alpha}
        \end{equation*}
        as claimed.
\end{proof}

\subsection{Combinatorial formula in terms of path operators}\label{ssec:proof_comb_formula}

We are ready to prove the combinatorial formula for the operators $\A_F^{(\ell)}$ announced in \cref{thm:dif_eq''}.

\begin{proof}[Proof of \cref{thm:dif_eq''}]
	
		We prove the theorem by induction. For $\ell=1$, we know from \eqref{def:AFell} and \cref{thm:A_n-P_n}, that
	\begin{equation*}\label{eq:A-Q}
	  \A_F^{(1)}=a_0\mcQ_0+a_1\mcQ_{1}\dots  
	\end{equation*}
	giving the theorem for $\ell=1$.

		We now assume that \eqref{eq:diff_eq'} holds for some $\ell\geq 1$. We then have from \eqref{def:AFell} and the induction assumption
		\begin{align*}
	   \A_F^{(\ell+1)}
		  =\frac{1}{M}\left[\A_{F},\A_F^{(\ell)}\right]
		  &=\frac{1}{M}\sum_{j\geq 0}\sum_{\alpha \in \ZZ_{\geq 0}^\ell}
		  a_{j} a_\alpha\left[\ad^j_{\frac{D_0}{-M}}(e_1[X]),\mcQ_{\alpha}\right],  
		\end{align*}
		where we use \cref{thm:A_n-P_n} to obtain the last equality. This can be written as follows
		\begin{align*}
		  \A_F^{(\ell+1)}
		  &=\frac{1}{M}\sum_{0\leq \nu_1\leq\dots\leq \nu_{\ell+1}}
			\frac{1}{|\Aut(\nu)|} a_\nu \sum_{\sigma\in\mathfrak
S_{\ell+1}}\left[\ad^{\nu_{\sigma(\ell+1)}}_{\frac{D_0}{-M}}(e_1[X]),\mcQ_{\sigma (\nu^{\downarrow(\ell+1)})}\right],
		\end{align*}		
  where $\Aut(\nu)<\mathfrak{S}_{\ell+1}$ is the stabilizer of the partition $\nu$.

		Applying \cref{thm:P_commutation}, we get
	\[      \A_F^{(\ell+1)}=\sum_{0\leq \nu_1\leq\dots\leq \nu_{\ell+1}}\frac{1}{|\Aut(\nu)|}a_\nu\sum_{\sigma\in\mathfrak
			S_{\ell+1}}\mcQ_{\sigma (\nu)}=\sum_{\alpha \in
			\mathbb{Z}_{\geq 0}^{\ell+1}}a_{\alpha}\mcQ_{\alpha}. \]
		This finishes the proof of the theorem.
	\end{proof}

\section{Applications: relation to the extended delta conjecture}\label{sec:relation_delta}

Define the operators $\Delta_{h_n}$ and $\Delta_{e_n}$ by their generating series:
\begin{equation*}
\label{eq:Pi-Delta}
	\Pi_{1+uz}=\sum_{n\geq 0}u^{n}\Delta_{e_n}, \quad \Pi_{(1-vz)^{-1}}=\sum_{n\geq 0}v^{n}\Delta_{h_n}.
\end{equation*}
In particular, they act diagonally on the basis of transformed Macdonald polynomials.
Define also the operator
\[ \Delta'_{e_n} = \sum_{i=0}^n (-1)^i\Delta_{e_{n-i}}.\]
We prove that the symmetric function $\Delta_{h_l}\Delta_{e_{k-1}}\cdot e_{n}$, which is the main object of interest in the extended delta conjecture, has an explicit formula in terms of the action of the path operators on $1$. This result together with \cref{thm:explicit_formula2} is equivalent to the key result~\cite[Theorem 4.4.1]{BlasiakHaimanMorsePunSeelinger2023b} in proving the extended delta conjecture. The proof in~\cite{BlasiakHaimanMorsePunSeelinger2023b} is based on the intermediate results describing the structure of the elliptic Hall algebra and its representation on $\Lambda$. We give a new proof that relies directly on combinatorial properties of path operators.

\begin{thm}\label{thm:ExtDelta}
    Fix three integers $l \geq 0$ and $0 < k \leq n$, and define the function $F\in S_{k+l}$
    $$F:=\operatorname{Sym}\left(\frac{z_1 \dotsm z_{k+l}}{\prod_{i=1}^{l+k-1} (1-qtz_{i+1}/z_i)}h_{n-k}(z_1,\dots,z_{k+l})e_l(z_2^{-1},\dots,z_{l+k}^{-1}) \prod_{1\leq i<j\leq \ell} \omega(z_j/z_i)\right).$$
    We have
    \begin{align}\label{eq:extended_delta_conj}
      (-1)^{n}\Delta_{h_l}\Delta'_{e_{k-1}}\cdot e_{n}=\sum_{\substack{\beta \in \ZZ_{\geq 0}^{k+l-1},\\|\beta| = n-k}}\sum_{\substack{\beta' \in \{0,1\}^{k+l-1},\\|\beta'| = l}}\mcR_{\beta+1^{k+l}-(0,\beta')}\cdot 1
      =\widehat{F}\cdot 1,
    \end{align}
       where $\widehat{F}$ is the operator on $\Lambda$ given by the action of the shuffle algebra (see \cref{def:shuffle_algebra}). 
\end{thm}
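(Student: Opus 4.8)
The statement packages two equalities: the path-operator sum equals the shuffle-algebra element $\widehat{F}\cdot 1$ (the right-hand equality), and the Delta-operator symmetric function equals that same path-operator sum (the left-hand equality). I would prove these separately, since the right-hand equality is essentially a dictionary between our path formalism and the shuffle algebra, while the left-hand equality carries the genuinely new content. For the right-hand equality $\sum_{\beta,\beta'}\mcR_{\beta+1^{k+l}-(0,\beta')}\cdot 1=\widehat{F}\cdot 1$, the plan is to expand the two symmetric functions $h_{n-k}(z_1,\dots,z_{k+l})=\sum_\beta z^\beta$ (over monomials with $|\beta|=n-k$) and $e_l(z_2^{-1},\dots,z_{k+l}^{-1})=\sum_{\beta'}z^{-(0,\beta')}$ (over $\beta'\in\{0,1\}^{k+l-1}$ with $|\beta'|=l$) inside $F$, and to combine them with the prefactor $z_1\dotsm z_{k+l}=z^{1^{k+l}}$. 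Each resulting term exhibits $F$ as a sum of Negut elements $X_\gamma$ with $\gamma=\beta+1^{k+l}-(0,\beta')$, because the remaining common factor $\frac{1}{\prod(1-qtz_{i+1}/z_i)}\prod_{i<j}\omega(z_j/z_i)$ together with $\operatorname{Sym}$ is precisely the one in the definition of $X_\gamma$. Applying \cref{cor:Negut} ($\widehat{X}_\gamma=\mcR_\gamma$) termwise and summing then gives the claim; the only delicate point is the exponent/sign bookkeeping relating the monomial index to the index in \cref{def:shuffle_algebra}. Combined with \cref{thm:explicit_formula2}, this already turns the path-operator sum into a fully explicit plethystic formula, which is the form in which the equivalence with \cite[Theorem 4.4.1]{BlasiakHaimanMorsePunSeelinger2023b} is visible.

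For the left-hand equality $(-1)^n\Delta_{h_l}\Delta'_{e_{k-1}}\cdot e_n=\sum_{\beta,\beta'}\mcR_{\beta+1^{k+l}-(0,\beta')}\cdot 1$, I would first collect the whole family over $l\geq 0$ and $k\geq 1$ into a generating function in auxiliary variables $u,v$. Using $\sum_l v^l\Delta_{h_l}=\Pi_{(1-vz)^{-1}}$ and $\sum_m u^m\Delta'_{e_m}=\frac{1}{1+u}\Pi_{1+uz}$, together with the multiplicativity $\Pi_{G_1}\Pi_{G_2}=\Pi_{G_1G_2}$ of the diagonal operators, the generating function of the left-hand side becomes $\frac{1}{1+u}\Pi_G\cdot e_n$ with $G=\frac{1+uz}{1-vz}$. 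Since $e_n=(-1)^nD_n\cdot 1$ (because $h_j^\perp[MX]\cdot 1=\delta_{j,0}$) and $\Pi_G\cdot 1=1$, this equals $\frac{1}{1+u}\bigl(\Pi_G D_n\Pi_G^{-1}\bigr)\cdot 1$, so the task is reduced to evaluating this dressed operator combinatorially.

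Here I would invoke the machinery of \cref{sec:path_op}. Writing $G=G_1/G_2$ with $G_1=1+uz$ and $G_2=1-vz$, \cref{thm:dif_eq''} expresses $\A_{G_1}^{(\ell)}$ and $\A_{G_2}^{(\ell)}$ as sums of path operators $\mcQ_\alpha=\mcR_{\psi^{-1}(\alpha)}$; crucially, the choice $G_1=1+uz$ (a degree-one numerator) forces the $\{0,1\}$-valued inter-particle distances that produce exactly the factor $e_l$, and hence the variable $\beta'$, while the denominator $1-vz$ produces the unconstrained part recorded by $\beta$. The intertwining $\Pi_G\A_{G_2}^{(\ell)}\Pi_G^{-1}=\A_{G_1}^{(\ell)}$ of \cref{lem:DeltaG_A+} is the bridge that converts the content-weighting implicit in $\Pi_G$ into the valley-weighting $(qt)^{\height_\beta(V)}$ of the path operators, and matching the grading by $(k,l,n)$ recovers the path-operator sum on the right.

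The main obstacle is this last matching: bridging the \emph{spectral} definition of the Delta operators (diagonal on $\tH_\lambda$ with eigenvalues $h_l$ and $e_{k-1}$ evaluated at the contents $\sum_{(i,j)\in\lambda}q^{j-1}t^{i-1}$) with the purely \emph{combinatorial} path operators. The generating-function reduction correctly isolates the dressed operator $\Pi_G D_n\cdot 1$, but turning $\Pi_G$ into the valley combinatorics, and in particular reconciling the fixed degree $(k,l,n)$ on the combinatorial side with the operator identities (which are naturally formulated at the level of generating functions), is the step that needs care. I expect it to follow from the commutation relations of \cref{sec:path_op} — especially \cref{thm:A_n-P_n} and \cref{prop:D0_P}, which build the path operators from commutators of $D_0$ and $e_1[X]$ — applied exactly as in the proof of the functional equation for the $(q,t)$-tau function, with $\Delta_{h_l}$ and $\Delta'_{e_{k-1}}$ arising as the specialization of the weights $G_2=1-vz$ and $G_1=1+uz$.
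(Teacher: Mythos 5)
Your overall architecture coincides with the paper's proof. The right-hand equality is obtained exactly as you describe: expand $h_{n-k}$ and $e_l$ into monomials so that $F$ becomes a sum of Negut elements $X_{\beta+1^{k+l}-(0,\beta')}$, and apply \cref{cor:Negut} termwise. The left-hand equality is reduced, exactly as in the paper, to
\[
(-1)^{n}\Delta_{h_l}\Delta'_{e_{k-1}}\cdot e_{n}=[v^{l}u^{k-1}]\tfrac{1}{1+u}\,\A^{(n)}_{\frac{1+uz}{1-vz}}\cdot 1,
\]
via $e_n=(-1)^nD_n\cdot 1$, $D_n=\mcQ_{0^n}=\A^{(n)}_K$ for $K\equiv 1$, the conjugation $\Pi_G\,\A^{(n)}_K\,\Pi_G^{-1}=\A^{(n)}_{G}$ from \cref{lem:DeltaG_A+}, and \cref{thm:dif_eq''}. (The paper writes the $u$-extraction as $\sum_{i=1}^{k}(-1)^{i-1}[v^lu^{k-i}]$ instead of using the prefactor $\frac{1}{1+u}$; these are the same.) So the strategy is sound and matches the paper's.

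The gap is the step you flag at the end but do not carry out, and your diagnosis of what it requires is off. After \cref{thm:dif_eq''} one has
\[
[v^{l}u^{k-i}]\,\A^{(n)}_{\frac{1+uz}{1-vz}}\cdot 1=\sum_{\substack{\alpha\in\ZZ_{\geq 0}^{n}\\ |\alpha|=l+k-i}}\binom{\ell(\alpha^+)}{k-i}\mcQ_{\alpha}\cdot 1,
\]
a sum over $n$ inter-particle \emph{distances} of total size $l+k-i$, whereas the target is a sum over $k+l$ \emph{particle counts} $\beta+1^{k+l}-(0,\beta')$ of total size $n$. Bridging these is not a matter of ``turning $\Pi_G$ into valley combinatorics'' --- that conversion is already fully accomplished by \cref{lem:DeltaG_A+} and \cref{thm:dif_eq''} --- nor does it involve the commutation relations of \cref{thm:A_n-P_n} or \cref{prop:D0_P}, which are already baked into \cref{thm:dif_eq''}. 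What actually remains is a purely combinatorial reindexing: (i) apply $\psi^{-1}$ with a case split on whether $\alpha_n=0$, which shifts $\ell(\beta^+)$ by one; (ii) use the identity $\mcR_{\gamma,0}\cdot 1=\mcR_{\gamma}\cdot 1$; (iii) telescope the alternating sum over $i$ via Pascal's identity to reach $\sum_{\gamma_1>0,\,|\gamma|=n}\binom{\ell(\gamma^+)-1}{k-1}\mcR_{\gamma}\cdot 1$; and (iv) unpack that binomial coefficient as the sum over $\beta'$. None of this appears in your sketch. Relatedly, your heuristic about which factor of $G$ produces what is backwards: the numerator $1+uz$ (the $u$-grading, i.e.\ $\Delta'_{e_{k-1}}$) contributes the $\{0,1\}$-valued distance increments, which after telescoping become the $k-1$ positions where $\beta'$ vanishes, while the $v$-grading from $\frac{1}{1-vz}$ (i.e.\ $\Delta_{h_l}$) is what fixes $|\beta'|=l$; the factor $e_l$ in $F$ records $\beta'$ itself, not the unconstrained part.
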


\begin{proof}
Notice that
\[(-1)^{n}e_{n}=D_{n}\cdot 1=\mcQ_{0^{n}}\cdot 1.\]
Therefore, we can rewrite $\Delta_{h_l}\Delta'_{e_{k-1}}\cdot e_{n}$ as follows:
$$(-1)^{n}\Delta_{h_l}\Delta'_{e_{k-1}}\cdot e_{n}=\sum_{i=1}^{k}(-1)^{i-1}[v^{l}u^{k-i}]\Pi_{\frac{1+uz}{1-vz}}\cdot \mcQ_{0^{n}}\cdot\left(\Pi_{\frac{1+uz}{1-vz}}\right)^{-1}\cdot 1.$$
Moreover, we know from \cref{thm:dif_eq''} that $\mcQ_{0^{n}}=\A_{K}^{(n)}$, where $K(z)=1$ is the constant polynomial. \cref{lem:DeltaG_A+} implies that 
$\Pi_G\cdot \A_{K}^{(n)}\cdot \Pi_G^{-1}=\A_{KG}^{(n)}=\A_{G}^{(n)}.$  We then get 
\begin{equation}\label{eq:extended_delta}
  (-1)^{n}\Delta_{h_l}\Delta'_{e_{k-1}}\cdot e_{n}=\sum_{i=1}^{k}(-1)^{i-1}[v^{l}u^{k-i}]\A_{\frac{1+uz}{1-vz}}^{(n)}\cdot 1.  
\end{equation}
By reapplying \cref{thm:dif_eq''}, we have that
\[ [v^{l}u^{k-i}]\A_{\frac{1+uz}{1-vz}}^{(n)}\cdot 1=\sum_{\substack{\alpha \in \ZZ_{\geq 0}^n,\\|\alpha| = l}}\sum_{\substack{\alpha' \in \{0,1\}^n,\\|\alpha'| = k-i}}\mcQ_{\alpha+\alpha'}\cdot 1 = \sum_{\substack{\alpha_1,\dots,\alpha_{n} \geq 0,\\|\alpha| = l+k-i}}\binom{\ell(\alpha^+)}{k-i}\mcQ_{\alpha_1,\dots,\alpha_{n}}\cdot 1,\]
where we use the notation $\ell(\alpha^+)$ to denote the number of non-zero parts of $\alpha$. Recall the map $\psi$ introduced in \cref{sub:path_op} that relates the operators $\mcQ_\alpha = \mcR_\beta$, where $\psi^{-1}(\alpha)=:\beta \in \ZZ_{>0}\times \ZZ^{l+k-i}_{\geq 0}$ for $\alpha$ such that $|\alpha|=l+k-i$. Notice that 
\begin{itemize}
	\item if $\alpha_{n} = 0$, then $\ell(\alpha^+) = \ell(\beta^+)-1$, and $\beta_{l+k-i+1} > 0$,
	\item if $\alpha_{n} > 0$, then $\ell(\alpha^+) = \ell(\beta^+)$, and $\beta_{l+k-i+1} = 0$.
\end{itemize}
Therefore $[v^{l}u^{k-i}]\A_{\frac{1+uz}{1-vz}}^{(n)}\cdot 1$ can be rewritten as follows:
\begin{multline*}
	\sum_{\substack{\beta_1,\beta_{l+k-i+1}>0,\\ \beta_2,\dots,\beta_{l+k-i} \geq 0,\\|\beta| = n}}\binom{\ell(\beta^+)-1}{k-i}\mcR_{\beta_1,\dots,\beta_{l+k-i+1}}\cdot 1  + \sum_{\substack{\beta_1>0,\\ \beta_2,\dots,\beta_{l+k-i} \geq 0,\\|\beta| = n}}\binom{\ell(\beta^+)}{k-i}\mcR_{\beta_1,\dots,\beta_{l+k-i},0}\cdot 1\\
	= \sum_{\substack{\beta_1>0,\\ \beta_2,\dots,\beta_{l+k-i+1} \geq 0,\\|\beta| = n}}\binom{\ell(\beta^+)-1}{k-i}\mcR_{\beta_1,\dots,\beta_{l+k-i+1}}\cdot 1+\sum_{\substack{\beta_1>0,\\ \beta_2,\dots,\beta_{l+k-i} \geq 0,\\|\beta| = n}}\binom{\ell(\beta^+)-1}{k-i-1}\mcR_{\beta_1,\dots,\beta_{l+k-i}}\cdot 1.
\end{multline*}
The second equality comes from the binomial identity
\[ \binom{\ell(\beta^+)}{k-i} = \binom{\ell(\beta^+)-1}{k-i}+\binom{\ell(\beta^+)-1}{k-i-1}\]
and from the fact that $\mcR_{\beta,0}\cdot 1 = \mcR_{\beta}\cdot 1$, which is a consequence of the identity $\mathcal{O}(-k) \cdot 1 = \delta_{k,0}$ for $k \geq 0$. Applying this in \cref{eq:extended_delta}, we end up with the following formula:
\begin{align*}
	(-1)^{n}\Delta_{h_l}\Delta'_{e_{k-1}}\cdot e_{n} 
    &= \sum_{\substack{\beta_1>0,\\ \beta_2,\dots,\beta_{l+k} \geq 0,\\|\beta| = n}}\binom{\ell(\beta^+)-1}{k-1}\mcR_{\beta_1,\dots,\beta_{l+k}}\cdot 1\\
    &= \sum_{\substack{\beta \in \ZZ_{\geq 0}^{k+l},\\|\beta| = n-k}}\sum_{\substack{\beta' \in \{0,1\}^{k+l-1},\\|\beta'| = l}}\mcR_{\beta+1^{k+l}-(0,\beta')}\cdot 1.
\end{align*}
To obtain the second part of \cref{eq:extended_delta_conj}, we use \cref{cor:Negut} to write:
$$\sum_{\substack{\beta \in \ZZ_{\geq 0}^{k+l},\\|\beta| = n-k}}\sum_{\substack{\beta' \in \{0,1\}^{k+l-1},\\|\beta'| = l}}\mcR_{\beta+1^{k+l}-(0,\beta')}=\widehat{F},$$
with 
$$F=\sum_{\substack{\beta \in \ZZ_{\geq 0}^{k+l},\\|\beta| = n-k}}\sum_{\substack{\beta' \in \{0,1\}^{k+l-1},\\|\beta'| = l}}\operatorname{Sym}\left(\frac{z^{\beta+1^{k+l}-(0,\beta')}}{\prod_{i=1}^{l+k-1} (1-qtz_{i+1}/z_i)} \prod_{1\leq i<j\leq \ell} \omega(z_j/z_i)\right),$$
where $z^\alpha:=z_1^{\alpha_1}\dots z_{l+k}^{\alpha_{l+k}}.$
We conclude using the definitions of homogeneous and elementary symmetric functions.
\end{proof}

Notice that there is an extra factor $(-1)^n$ in \cref{thm:ExtDelta} compared to \cite{BlasiakHaimanMorsePunSeelinger2023} due to the fact that our conventions for the operators $D_a$ differ by a factor $(-1)^a$.

\begin{rmk}
In~\cite[Definition 4.3.1]{BlasiakHaimanMorsePunSeelinger2023b}, the authors use lattice paths (different from the ones we consider here) to define an involutive correspondence between some sequences of nonnegative integers. It is not very hard to see that this correspondence is precisely the map $\psi$ introduced in \cref{sub:path_op}. Using the notation from \cite{BlasiakHaimanMorsePunSeelinger2023b}, they introduce the transposed Negut elements $E_\alpha$, and they prove in~\cite[Proposition 4.3.3]{BlasiakHaimanMorsePunSeelinger2023b} that $E_\alpha = D_\beta$, where $\psi(\beta) = \alpha$. Thus, \cref{cor:Negut} implies that $\mcQ_{\beta} = (-1)^{|\beta|}E_{\beta}$, and consequently we also have an explicit combinatorial formula for the representation of the transposed Negut elements.
\end{rmk}

% \subsection*{Acknowledgements}
\bibliographystyle{amsalpha}
\bibliography{biblio2015}
\end{document}